\documentclass[12pt]{article}

\usepackage[margin=1in]{geometry}
\usepackage{amsmath,amssymb,amsthm}
\usepackage{enumitem}
    \setlist{itemsep=0mm}
\usepackage{pifont}
\usepackage{csquotes}

\usepackage{tikz}
\usetikzlibrary{positioning,fit}
\usepackage{caption}
\usepackage{graphicx}

\usepackage[citestyle=authoryear,bibstyle=authoryear]{biblatex}
\newtheorem{theorem}{Theorem}

\newtheorem{corollary}{Corollary}
\newtheorem{definition}{Definition}

\newtheorem{proposition}{Proposition}
\newtheorem{claim}{Claim}
\newtheorem{remark}{Remark}

\DeclareSymbolFont{symbolsC}{U}{txsyc}{m}{n}
\SetSymbolFont{symbolsC}{bold}{U}{txsyc}{bx}{n}
\DeclareMathSymbol{\coloneqq}{\mathrel}{symbolsC}{"42}
\DeclareMathSymbol{\Coloneqq}{\mathrel}{symbolsC}{"46}

\newcommand{\xmark}{\ding{55}}
\newcommand{\mystar}{\mathord{\star}}

\newcommand{\tup}[1]{\langle #1 \rangle}

\newcommand{\would}{\mathbin{>}}
\newcommand{\nec}{\mathop{\Box}}
\newcommand{\pos}{\mathop{\Diamond}}
\newcommand{\then}{\mathbin{\supset}}
\newcommand{\ex}{\mathop{\textit{E}}}

\title{Incompleteness in Quantified Conditional Logic}
\author{Alexander W.\ Kocurek, James Walsh, Yale Weiss}

\date{}

\begin{document}
\maketitle

\begin{abstract}
    Stalnaker and Thomason famously proved that the conditional logic \textsf{C2} with first-order quantifiers is complete with respect to a selection function semantics. 
    However, the selection functions used in this completeness result take formulas, rather than propositions (i.e., sets of worlds), as arguments. 
    Yet Stalnaker has repeatedly emphasized the philosophical importance of viewing selection functions as functions on propositions, and many of the applications of his theory require this. 
    Can their completeness result be extended to a selection function semantics in which the functions take propositions as arguments? 
    We prove the answer is negative: Their logic is frame incomplete. 
    Moreover, this result is invariant with respect to many choice points regarding the semantics, such as variable vs.~constant domains or whether to include an identity or existence predicate. We conclude by discussing some of the important and difficult questions for the philosophical and logical study of conditionals that our results raise.

    \bigskip\noindent\textbf{Keywords.} Conditionals; Incompleteness; Quantifiers; Selection functions; Propositions.
\end{abstract}

\section{Introduction}
\label{Section:Introduction}

In \emph{A Theory of Conditionals}, Stalnaker writes:
\begin{displayquote}[\cite{Stalnaker1968theory}, p.~98]
    My principal concern will be with what has been called the \emph{logical problem of conditionals}\dots 
    This is the task of describing the formal properties of the \emph{conditional function}: a function, usually represented in English by the words `if\dots then', taking ordered pairs of propositions into propositions.
\end{displayquote}
Stalnaker identifies propositions with \emph{sets of possible worlds} and explicates the conditional function in terms of the \emph{selection function}, which maps a given proposition $P$ and world $w$ to the ``closest'' $P$-world to $w$. 
Thus, as Stalnaker conceived of it, any solution to the logical problem of conditionals requires axiomatizing the logical behavior of selection functions so-understood.

Stalnaker claims (ibid, p.\ 98, fn.\ 1) that the propositional version of this problem is answered by a completeness theorem for a logic called \textsf{C2}, citing an early version of \cite{StalnakerThomason1970}. 
The published version goes further and addresses a first-order variant of the logical problem of conditionals.
They prove a completeness theorem for a logic, which we will call \textsf{QST} (Quantified Stalnaker-Thomason logic), with the goal of establishing that Stalnaker's theory of conditionals is ``technically respectable'' and satisfies ``the standards currently set by logicians'' \parencite[41]{StalnakerThomason1970}.\footnote{What we call \textsf{QST} is what Stalnaker and Thomason call \textsf{CQ}. We use the label $\mathsf{QST}$ to avoid confusion with other labels we introduce later (e.g., $\mathsf{QC2}$).}

While it is debatable whether Stalnaker and Thomason presented the \emph{correct} answer to the logical problem of conditionals, it is generally thought that they presented \emph{an} answer to the problem. 
The aim of this paper is to reassess this claim. 
We argue that, in fact, the logic $\mathsf{QST}$ does not---indeed, \emph{cannot}---constitute a solution to the logical problem of conditionals as Stalnaker conceived it. 

To see why, it is important to emphasize that Stalnaker characterizes the logical problem of conditionals in terms of propositions rather than linguistic expressions. 
This characterization is not incidental but is rooted in the theoretical roles that both propositions and conditionals play in his work.
Regarding propositions, Stalnaker has repeatedly emphasized that propositions are not linguistic items and that any ``account of propositions that treats them as linguistic items of some kind would be inappropriate'' \parencite[pp.\ 82--83]{stalnaker1976propositions}. For instance, Stalnaker accounts for rational action in terms of beliefs and desires, where the objects of beliefs and desires are propositions; since an account of rational action should apply to both linguistic and non-linguistic creatures, propositions must be understood non-linguistically. Regarding conditionals, Stalnaker has used his theory of conditionals to analyze, among other things, rational action, communication, laws of nature, and the probabilities of conditionals. Since these are understood in terms of propositions, conditionals must be understood as functions on propositions.


There is also a considerable amount of work within neo-Stalnakerian frameworks that crucially understands the conditional function by means of selection functions \emph{on propositions}. 
For instance, in contemporary neo-Stalnakerian work on the probabilities of conditionals (see, e.g., \cite{Schulz2014,Schulz2017,Bacon2015,GoldsteinSantorio2021,Khoo2022,KhooMandelkern2019,Santorio2022,Santorio2025,Schultheis2023}), probability functions---whether they are understood in terms of credences or objective chances---are treated as functions on non-linguistic entities, as are selection functions.
\footnote{
    We say ``non-linguistic entities'' rather than ``propositions'' since some of these analyses use probability functions on more fine-grained entities than sets of worlds (e.g., sets of sequences of worlds). 
    Occasionally, these authors will talk of the probability of a ``sentence'', but this is always understood to mean the probability of the proposition(-like entity) expressed by a sentence. 
    } 
This non-linguistic orientation to conditional credences is in keeping with Stalnaker's claim that non-linguistic creatures can also have credences \parencite{stalnaker1976propositions}. Similarly, Stalnaker's theory and logic of conditionals is often invoked in contemporary decision theory, specifically in the context of causal decision theory, where expected utility invokes the probabilities of certain counterfactual conditionals \parencite{GibbardHarper1978,Lewis1981,Joyce1999,Hedden2023,McNamara2024}.  
Here again, it is crucial that the probability of a conditional is not the probability of a sentence necessarily, but rather the probability of a conditional proposition, since decision theory is meant to provide a theory of rationality for any agent, including non-linguistic ones. 




Thus, the decision to characterize the logical problem of conditionals in terms of propositions rather than linguistic expressions is not arbitrary, but rather is central to the intended applications of the theory. Indeed, Stalnaker and Thomason discuss potential applications of their completeness theorem for \textsf{QST}, and it is clear from their discussion that they understand this completeness theorem to have addressed the status of the logical problem \emph{for propositions}. 
For example, Stalnaker and Thomason explore an analysis of laws of nature in terms of conditionals that appeals directly to properties of $\mathsf{QST}$. 
As they observe, a linguistic analysis of laws would not be plausible, as nothing requires the laws of nature to be stated or even stateable in one's object language: 
\begin{displayquote}
    One of the advantages of the semantic approach is that it allows us to focus directly on the world rather than on linguistic expressions purporting to describe it. 
    One might, for example, give an analysis of explanation in terms of possible worlds and s-functions rather than indirectly in terms of laws. 
    This way, one could avoid the embarrassment of unstated or unknown laws, and one might find an analysis which handles more plausibly the everyday cases of explanation. \parencite[p.\ 41]{StalnakerThomason1970}
\end{displayquote}
Likewise, in discussing a definition of probabilities of conditionals in terms of conditional probabilities, 
Stalnaker and Thomason explicitly invoke the logical properties of $\mathsf{QST}$ in their analysis, writing:
\begin{displayquote}
    If this definition is extended in a natural way, the resulting conditional concept has exactly the structure of the conditional defined by the system [\textsf{QST}]. 
\parencite[p.\ 41]{StalnakerThomason1970}
\end{displayquote}

And yet, the selection functions with respect to which Stalnaker and Thomason obtain their completeness results are not functions on \emph{propositions}. That is, the completeness result does \emph{not} axiomatize the behavior of the functions that \cite{Stalnaker1968theory} takes himself to be concerned with. 
Rather, the selection functions used to prove completeness are functions on \emph{formulas} of the object language.
\footnote{
    See D3.4 in \cite[p.~27]{StalnakerThomason1970}. 
    In effect, they prove completeness with respect to a class of \emph{general frames} (cf.~\cite{Segerberg1989}).
    } 
In this framework, in other words, semantic values are not attributed to all conditional propositions by means of selection functions, but only to those whose antecedents are expressible. 
Thus, the Stalnaker--Thomason logic does not achieve the goal---which has been reiterated in various places (see, e.g., \cite[Ch.~7]{Stalnaker1984})---of characterizing the formal behavior of selection functions on \emph{propositions}. 

This leaves an important theoretical question about the logic of conditionals unresolved: 
Can $\mathsf{QST}$ be characterized by a semantics that realizes Stalnaker's guiding intuition? 
To put the matter precisely: Is $\mathsf{QST}$ the logic of any class of selection function frames, wherein functions are defined for \textit{all} propositions and worlds? Our main theorem is that the Stalnaker--Thomason logic $\mathsf{QST}$ is \emph{frame incomplete}, that is, it cannot be characterized by any such class of frames. 
It is thus \emph{impossible} for $\mathsf{QST}$ to constitute an answer the logical problem of conditionals as \cite{Stalnaker1968theory} articulates it.


In fact, we prove something even stronger. 
The original Stalnaker--Thomason logic \textsf{QST} is stated in a language that includes the identity symbol. 
It does not include a primitive existence predicate, but an existence predicate is definable in terms of identity. 
Moreover, \textsf{QST} is intended to capture actualist quantification, that is, the intended semantics has variable domains of quantification. 
As we will show, our incompleteness theorem does not depend on any of these choice points. 
To illustrate this, we concentrate on a system we call \textsf{QC2}, which is the simplest among the systems we consider. 
This system is essentially the result of combining \textsf{C2} with the standard axioms and rules governing first-order quantifiers in the most straightforward manner. 
The language of \textsf{QC2} lacks symbols for identity and existence and the intended semantics for \textsf{QC2} has a single \emph{global} domain of quantification. 
After proving our incompleteness theorem for \textsf{QC2}, we explain how to modify it to accommodate the aforementioned choice points, e.g., to include an existence or identity predicate or to allow variable domains of quantification.


Although our result appears to be the first result of its kind in quantified conditional logic,
    \footnote{\cite{Nute1978} presents an incompleteness result for propositional conditional logic (which, however, piggy-backs on incompleteness results in propositional modal logic) with respect to the sphere semantics of \cite{Lewis1973}.
    } 
the frame incompleteness phenomenon is well-known in modal logic. 
Most propositional normal modal logics that philosophers and logicians have been interested in can be characterized by Kripke frames by playing with the conditions on the binary accessibility relation. 
However, a number of  examples of propositional normal modal systems that are not the logics of any class of Kripke frames (i.e., \emph{frame incomplete} modal logics) have been found since the mid-1970s.
\footnote{
    See \cite{Fine1974incomplete}, \cite{Thomason1974incomplete}, \cite{vanBenthem1978incomplete}, and \cite{BoolosSambin1985incomplete}, as well as textbook presentations of such systems in \cite[Ch.~9]{HughesCresswell1996} and \cite[\S4.4]{Blackburn2001}. 
    This does not mean that such systems are without \emph{any} semantics; they can be characterized algebraically, or using general frames (i.e., Kripke frames with an attached algebra of propositions into which admissible valuations map propositional variables). 
    For some discussion, see \cite[Ch.~5]{Blackburn2001}.
    } 
While incomplete propositional modal logics tend to be a bit artificial,
\footnote{
    More precisely, incomplete propositional \emph{monomodal} logics tend to be artificial. Fairly natural examples of incomplete propositional \emph{bimodal} logics include the tense logic of \cite[\S4]{Thomason1972tense} and the bimodal provability logic $\mathsf{GLB}$ (see \cite[pp.~193--194]{Boolos1993provability}).
    } 
there are natural examples of frame complete propositional modal logics whose extensions with quantifiers are frame incomplete. 
A particularly nice (and suggestive) example is the provability logic $\mathsf{GL}$. 
On the one hand, propositional $\mathsf{GL}$ is frame complete---it is the logic of finite Kripke frames in which the accessibility relation is transitive and converse well-founded---but it is not strongly complete, i.e., there are some (infinitary) entailments that cannot be derived in $\mathsf{GL}$.
\footnote{
    The (weak) completeness of $\mathsf{GL}$ (alias: $\mathsf{K4W}$) was proved by \cite[pp.~84--89]{Segerberg1971Dissertation}; for a textbook presentation, see \cite[Chs.~4--5]{Boolos1993provability}. For a proof that $\mathsf{GL}$ is not strongly complete, see \cite[p.~211]{Blackburn2001}. The reader may find it illuminating to compare this argument with the argument \cite{kocurek2025stalnaker} gives to show that strong completeness fails for $\mathsf{C2}$.
    } 
On the other, \cite{Montagna1984provability} proved that $\mathsf{GL}$ with quantifiers (but not the Barcan formula) is frame incomplete.
\footnote{
    For many related results, see \cite{Rybakov2024incompleteness}.
    } 
As we shall see, the situation with Stalnaker's conditional logic turns out to be rather similar. 
In particular, while $\mathsf{C2}$ is weakly complete with respect to the propositional selection function semantics (i.e., all validities are $\mathsf{C2}$-theorems),\footnote{This result is not proved in \cite{Stalnaker1968theory}, but a proof sketch can be found, for example, in \cite[\S6]{vanFraassen1974hiddenvariables}.}
it is not strongly complete with respect to this semantics, i.e., some (infinitary) entailments are not derivable in $\mathsf{C2}$.
\footnote{
    Failure of strong completeness for $\mathsf{C2}$ was observed or proved independently by \cite[\S6]{vanFraassen1974hiddenvariables}, \cite[p.~105]{Veltman1985logics}, \cite[fn.\ 2]{fine2012difficulty}, \cite{kocurek2025stalnaker}, and \cite{dorr2024logic}.
    } 
Our incompleteness theorem effectively fills in the analogy with $\mathsf{GL}$ by showing that $\mathsf{C2}$ extended with quantifiers (be they actualist or possibilist, with or without the Barcan formula or its converse) is frame incomplete. 

The plan of our paper is as follows. In Section~\ref{Section:Semantics}, we survey the (set) selection function semantics as well as an alternative ordering semantics which we will make technical use of. 
Axiom systems for various conditional logics we are interested in are presented in Section~\ref{Section:Logics}, where we also discuss known completeness results. 
In Section~\ref{Section:Proof of Incompleteness}, we prove the frame incompleteness of \textsf{QC2}. 
We demonstrate how to extend the result to other quantified conditional logics, including Stalnaker and Thomason's \textsf{QST}, in Section~\ref{Section:Refinements}. 
Finally, we offer some concluding remarks in Section~\ref{Section:Concluding Remarks}.

\section{Semantics}
\label{Section:Semantics}


In this section, we survey two kinds of semantics (or model theory) for quantified conditional logics. 
In Section~\ref{Subsection:Set Selection Function Semantics}, we examine the (set) selection function semantics with respect to which we ultimately prove incompleteness. 
In Section~\ref{Subsection:Ordering Semantics}, we examine an alternative ordering semantics, which we use to establish an independence result essential to our incompleteness proof.

Before turning to semantics proper, we fix some details concerning the formal languages we will work with. 

\begin{definition}
\label{def:lang}
    We assume that we have a countable stock of individual variables $(x_i)_{i<\omega}$ and, for each $n$, countably many $n$-place predicates $(P_i^n)_{i<\omega}$. The language $\mathcal{L}$ of quantified conditional logic is given below:
    \begin{align*}
        \phi & \Coloneqq P^n (x_1,\dots,x_n) \mid \neg\phi \mid (\phi \then \phi) \mid (\phi \would \phi) \mid \forall x\phi
    \end{align*}
    where $P^n$ is an $n$-place predicate. 
    The other extensional operators---$\wedge$, $\vee$, $\equiv$, $\top$, $\bot$, and $\exists$---and defined in the standard way. 
    We also adopt the following standard abbreviations:
    \begin{align*}
        \nec\phi & \coloneqq (\neg\phi \would \bot) \\
        \pos\phi & \coloneqq \neg(\phi \would \bot)
    \end{align*}
    The language $\mathcal{L}_{\ex}$ is the result of adding a designated existence predicate $E$ to $\mathcal{L}$. 
    The language $\mathcal{L}_=$ is the result of adding an identity predicate $=$ to $\mathcal{L}$. 
    Though $\mathcal{L}_=$ technically lacks the existence predicate $E$, when working with $\mathcal{L}_=$, we use $E(x)$ as an abbreviation for the formula $\exists y(x = y)$. 
\end{definition}

We abuse notation in standard ways and often write things like $x$ and $y$ for individual variables and $P$ and $F$ for predicates, allowing arity to be determined by the context. 


\subsection{Selection Function Semantics}
\label{Subsection:Set Selection Function Semantics}

The main focus of this paper is on the selection function semantics introduced by \cite{Stalnaker1968theory} and \cite{StalnakerThomason1970}. 
Historically, this was presented as a \textit{world selection function} semantics in which the conditional is modeled using a function which maps pairs consisting of antecedents and worlds to individual worlds (intuitively, the closest antecedent-worlds to the world of evaluation). 
Here we present the semantics in an equivalent form, following \cite[\S2.7]{Lewis1973}, as a \textit{set selection function} semantics, which also allows us to dispense with certain technical devices (e.g., the impossible world $\lambda$).

\begin{definition}
\label{Definition:Set Selection Frame}
    A \emph{set selection function frame} (hereafter, \emph{selection frame}) is a quintuple $\mathcal{F} = \tup{W,R,f,D,d}$, where:
    \begin{itemize}
        \item $W \neq \varnothing$ is a set of worlds;
        \item $R \subseteq W \times W$ is the accessibility relation; 
        \item $f:\mathcal{P}(W)\times W \to \mathcal{P}(W)$ is the (set) selection function;
        \item $D \neq \varnothing$ is the (global) domain;
        \item $d\colon W \rightarrow \mathcal{P}(D)$ is the local domain assignment.\footnote{We follow \cite[p.~25]{StalnakerThomason1970} in allowing for empty local domains.}
    \end{itemize}
    We write $R(w)$ for $\{x \in W \mid wRx\}$. The only further condition we impose is that for any $P \subseteq W$ and any $w\in W$, $f(P,w) \subseteq R(w)$.
\end{definition}

Since $f$ is defined for all propositions ($P\subseteq{W}$) in Definition~\ref{Definition:Set Selection Frame}, these frames are \textit{full} in a technical sense (cf.~\cite[p.~161]{Segerberg1989}).

Throughout, we consider various constraints one might naturally impose on selection frames. 
One class of constraints concerning the local domain assignment, reflecting different interpretations of the first-order quantifier in modal settings (e.g., actualist vs.\ possibilist). 

\begin{definition}
\label{Definition:Domains}
    Let $\mathcal{F} = \tup{W,R,f,D,d}$ be a selection frame. 
    \begin{itemize}
        \item $\mathcal{F}$ is a \emph{(globally) constant domain frame} if $d(w) = D$ for all $w \in W$; 
        \item $\mathcal{F}$ is a \emph{locally non-decreasing frame} if $d(w) \subseteq d(v)$ when $wRv$;
        \item $\mathcal{F}$ is a \emph{locally non-increasing frame} if $d(v) \subseteq d(w)$ when $wRv$;
        \item $\mathcal{F}$ is a \emph{locally constant domain frame} if it is locally non-decreasing and non-increasing. 
    \end{itemize}
    For globally constant domain frames, we often omit mention of $d$, writing $\mathcal{F}=\tup{W,R,f,D}$. 
    We sometimes use the term \emph{``variable domain frame''} when talking about selection frames, regardless of the constraints on $d$. 
\end{definition}

Note, while being globally constant implies being locally constant, the reverse isn't true. 
For example, it may be that some elements of $D$ are ``impossibilia'', in that they do not appear in $d(w)$ or in $d(v)$ for any $v \in R^*(w)$, where $R^*$ is the transitive closure of $R$, even though $d(w) = d(v)$ for all such $v$. 
Such frames are thus locally but not globally constant. 
This difference manifests in different logical principles, as we will observe momentarily. 

Another class of constraints concerns the selection function. 
In particular, we focus on selection frames that obey the kinds of constraints that \cite{Stalnaker1968theory} himself imposes on selection functions (translated into set selection functions). 

\begin{definition}
\label{Definition:Stalnakerian Function}
    A selection frame $\tup{ W,R,f,D,d}$ is \emph{Stalnakerian} just in case it satisfies these conditions for all $P,Q\subseteq W$, $w\in W$:
    \begin{enumerate}
        \item $f(P,w) \subseteq P$ \hfill (Success)
        \item If $w \in P$, then $w \in f(P,w)$ \hfill (Weak Centering)
        \item If $f(P,w) = \varnothing$, then $P \cap R(w) = \varnothing$ \hfill (LA)
        \item If $f(P,w) \subseteq Q$ and $f(Q,w) \subseteq P$, then $f(P,w) = f(Q,w)$ \hfill (Uniformity)
        \item $|f(P,w)| \leq 1$ \hfill (Uniqueness)
    \end{enumerate}
    A selection frame is \emph{weakly Stalnakerian} if it satisfies conditions 1, 2, 4, and 5.
\end{definition}

\begin{remark}
\label{Remark:Elementary Stalnakerian Properties}
    Weak Centering implies that the accessibility relation in any (weakly) Stalnakerian frame is reflexive. 
    Moreover, in the presence of Uniqueness, Weak Centering is equivalent to ``Strong Centering'' ($w \in P$ implies $f(P,w) = \{w\}$). 
    Since we generally assume Uniqueness in what follows, we may refer to either condition as the ``Centering'' condition. 
\end{remark}


As discussed in Section~\ref{Section:Introduction}, we are interested in a semantics in which selections are made with respect to \emph{propositions} rather than formulas (see further discussion below). 
Apart from this, our Definition~\ref{Definition:Set Selection Frame} is the same as D3.4 in \cite[p.~27]{StalnakerThomason1970}, and our conditions 1--4 in Definition~\ref{Definition:Stalnakerian Function} pick up the same work as the correspondingly numbered conditions in D3.7 in \cite[p.~29]{StalnakerThomason1970}. 

Further, note that Uniformity and Uniqueness already imply a weak version of LA:
\footnote{
    Suppose that $f(P,w) = \varnothing$. 
    If $f(Q,w)\subseteq P$, then $f(P,w)=f(Q,w)=\varnothing$ by Uniformity, so $P\cap f(Q,w)=\varnothing$. 
    Alternatively, if $f(Q,w) \nsubseteq P$, then $f(Q,w)=\{w'\}$ for some $w'\not\in P$ by Uniqueness, whence $P \cap f(Q,w)=\varnothing$, as desired.
    } 
    \begin{enumerate}
        \item[3$^-$.] If $f(P,w) = \varnothing$, then $P \cap f(Q,w) = \varnothing$ \hfill (WLA)
    \end{enumerate}
It is clear that WLA is implied by LA, since by Definition~\ref{Definition:Set Selection Frame}, $P \cap f(Q,w) \subseteq P \cap R(w)$. 
On the other hand, WLA does not imply LA. This brings us to the following observation:

\begin{remark}
\label{Remark:WeakStalnakerian}
    Every Stalnakerian selection frame is weakly Stalnakerian, but not vice versa. 
    Consider the globally constant domain selection frame $\mathcal{F} = \tup{\{1,2\}, R, f, \{a\}}$, in which $R$ is universal and $f$ is defined as follows:
    \begin{align*}
        f(P,w) & = {
        \begin{cases}
            \{w\} & \text{if } w \in P \\
            \varnothing & \text{otherwise}
        \end{cases}
        }
    \end{align*}
    As is easily verified, $\mathcal{F}$ is weakly Stalnakerian but not Stalnakerian since, for example, $f(\{2\},1)=\varnothing$ but $\{2\}\cap R(1)\neq\varnothing$.
\end{remark}

While the significance of the distinction between Stalnakerian and weakly Stalnakerian frames may not be immediately apparent to the reader, as we will see, the logic identified by \cite{StalnakerThomason1970} turns out to be valid over precisely the \textit{weakly} Stalnakerian frames (Proposition~\ref{Proposition:FrameCorrespondence}).
Since the Stalnakerian frames are a proper subclass of the weakly Stalnakerian frames, in proving incompleteness with respect to the latter we will (a fortiori) prove it with respect to the former as well.


\begin{definition}
    \label{Definition:SelectionModel}
    A \emph{selection model} is a sextuple $\mathcal{M}=\tup{W,R,f,D,d,I}$, where $\tup{W,R,f,D,d}$ is a selection frame (Definition~\ref{Definition:Set Selection Frame}) and for every $w\in W$ and $n$-ary predicate $P^n$, $I(P^n,w)\subseteq D^n$. 
    We say $\mathcal{M}$ is \emph{based on} the frame $\tup{W,R,f,D,d}$. 
\end{definition}

\begin{definition}
    \label{Definition:VariableAssignment}
    A \emph{variable assignment} in a selection model $\tup{W,R,f,D,d,I}$ is a function $g:\mathcal{V}\to{D}$, where $\mathcal{V}$ is the set of variables. 
    A variable assignment $g'$ is an \emph{$x$-variant} of $g$ (in symbols, $g'\sim_{x}g$) if $g(y)=g'(y)$ for all variables $y$ except possibly $x$. 
    For $a\in D$, we write $g^x_a$ for the $x$-variant of $g$ such that $g^x_a(x) = a$. 
\end{definition}

\begin{definition}
\label{Definition:Satisfaction}
    For a given selection model $\mathcal{M} = \tup{W,R,f,D,d,I}$, world $w\in W$, and variable assignment $g$, we define $\Vdash$ as follows (we write $[\phi]^{g}$ for $\{w\in W \mid \mathcal{M},w,g \Vdash \phi\}$):    
    \begin{itemize}
        \item[At.] $\mathcal{M},w,g \Vdash P^{n}(x_1,\dots,x_n)$ iff $\tup{g(x_1),\dots,g(x_n)}\in I(P^{n},w)$;
        \item[$E$.] $\mathcal{M},w,g \Vdash E(x)$ iff $g(x) \in d(w)$; 
        \item[$=$.] $\mathcal{M},w,g \Vdash x = y$ iff $g(x) = g(y)$; 
        \item[$\neg$.] $\mathcal{M},w,g \Vdash \neg\phi$ iff $\mathcal{M},w,g\not\Vdash \phi$;
        \item[$\then$.] $\mathcal{M},w,g \Vdash \phi\then\psi$ iff $\mathcal{M},w,g\not\Vdash \phi$ or $\mathcal{M},w,g \Vdash \psi$;
        \item[$\forall$.] $\mathcal{M},w,g \Vdash \forall{x}\phi$ iff for all $a \in d(w)$, $\mathcal{M},w,g^x_a \Vdash \phi$;
        \item[$\would$.] $\mathcal{M},w,g \Vdash \phi \would \psi$ iff $f([\phi]^{g},w)\subseteq[\psi]^{g}$.
    \end{itemize}
    Where $\Gamma$ is a set of formulas, we write $\mathcal{M},w,g \Vdash \Gamma$ to mean that $\mathcal{M},w,g \Vdash \gamma$ for every $\gamma\in\Gamma$. 
    We write $\mathcal{M},w \Vdash \phi$ to mean that $\mathcal{M},w,g \Vdash \phi$ for every variable assignment $g$. 
\end{definition}


Note that in any model over a \emph{Stalnakerian} selection frame, the defined modal connectives $\nec$ and $\pos$ have their standard Kripke truth conditions.
\footnote{
    For illustration, consider the case of $\nec$, and suppose $\mathcal{M},w,g \Vdash \nec\phi$, that is, $\mathcal{M},w,g \Vdash \neg\phi \would \bot$. 
    Then $f([\neg\phi]^{g},w)=\varnothing$, so by LA, $[\neg\phi]^{g}\cap R(w)=\varnothing$, whence $R(w)\subseteq [\phi]^{g}$. 
    Conversely, if $R(w)\subseteq[\phi]^{g}$, then since $f([\neg\phi]^{g},w)\subseteq R(w) \subseteq[\phi]^{g}$ (Definition~\ref{Definition:Set Selection Frame}) and $f([\neg\phi]^{g},w)\subseteq [\neg\phi]^g$ (Success), $f([\neg\phi]^{g},w)=\varnothing\subseteq [\bot]^{g}$, as desired.
    } 
\begin{itemize}
    \item[$\nec$.] $\mathcal{M},w,g \Vdash \nec\phi$ iff $R(w)\subseteq[\phi]^{g}$;
    \item[$\pos$.] $\mathcal{M},w,g \Vdash \pos\phi$ iff $R(w)\cap[\phi]^{g}\neq\varnothing$.
\end{itemize}
This does not generally hold for models over \emph{weakly} Stalnakerian frames, however.
\footnote{
    Define a model $\mathcal{M}$ over the frame from Remark~\ref{Remark:WeakStalnakerian} by putting $I(P,1)=\{a\}$ and $I(P,2)=\varnothing$ (otherwise arbitrary). For any $g$, $\mathcal{M},1,g \Vdash \nec{P(x)}$, but $R(1)\not\subseteq[P(x)]^{g}$.
    } 
For this reason, we exercise caution in using the defined modal connectives hereafter, and generally expand them to conditional formulations.

\begin{definition}
\label{Definition:SelectionValidity}
    We say that $\Gamma$ is \emph{valid in the selection model} $\mathcal{M} = \tup{W,R,f,D,d,I}$ (in symbols, $\mathcal{M} \Vdash \Gamma$) iff for every world $w\in W$ and variable assignment $g$, $\mathcal{M},w,g \Vdash \Gamma$; that $\Gamma$ is \emph{valid in the selection frame} $\mathcal{F} = \tup{W,R,f,D,d}$ (in symbols, $\mathcal{F} \Vdash \Gamma$) iff $\mathcal{M} \Vdash \Gamma$ for every model $\mathcal{M} = \tup{\mathcal{F},I}$; and that $\Gamma$ is \emph{valid in the class of selection frames} $\mathcal{C}$ (in symbols, $\mathcal{C} \Vdash \Gamma$) iff for every $\mathcal{F}\in\mathcal{C}$, $\mathcal{F} \Vdash \Gamma$. 
    
    By $\mathsf{L}(\mathcal{C})$ we mean the set of formulas valid in the class of frames $\mathcal{C}$. Regarding a logic $\mathsf{L}$ as a set of formulas, to say that $\mathsf{L}$ is \emph{selection frame incomplete} is just to say that $\mathsf{L}\neq\mathsf{L}(\mathcal{C})$ for any class of selection frames $\mathcal{C}$. 
\end{definition}


Before concluding this section, we briefly return to two previously mentioned points. First, to point out a logical difference between globally and locally constant domains, observe that while $\forall x\phi \then \phi[y/x]$ is valid over the class of globally constant domain selection frames, it is not valid over the class of locally constant domain selection frames, since $y$ may pick out an object that is not in the local domain of the reference world or any (transitively) accessible world. This will be one source of subtlety when we turn to axiomatizations in Section~\ref{Section:Logics}.

Second, it will be instructive to quickly summarize the significant differences between our semantics and that of \cite{StalnakerThomason1970}.\footnote{In this presentation, we ignore insignificant differences (e.g., their use of the technical device $\lambda$).} For a given (global) domain $D$, let $\mathcal{VA}_D$ be the set of variable assignments $g\colon \mathcal{V} \rightarrow D$. 

\begin{definition}
\label{Definition:Quasi Selection Frame}
    A \emph{quasi-selection frame} is a quintuple $\tup{W,R,f,D,d}$ where $W$, $R$, $D$, and $d$ are as in Definition~\ref{Definition:Set Selection Frame} and $f\colon \mathcal{L}_{=} \times W \times \mathcal{VA}_D \rightarrow \mathcal{P}(W)$ is a \emph{quasi-selection function} mapping formulas, worlds, and variable assignments to sets of worlds; as before, it is required that $f(\phi,w,g) \subseteq R(w)$. 
\end{definition}

The notions of a (weakly) Stalnakerian quasi-selection frame, etc., are defined, \textit{mutatis mutandis}, as before. We define $\Vdash$ as in Definition~\ref{Definition:Satisfaction}, except for the $\would$ clause:
\begin{itemize}
    \item[$\would$.] $\mathcal{M},w,g \Vdash \phi \would \psi$ iff $f(\phi,w,g) \subseteq [\psi]^g$.
\end{itemize}
Validity with respect to quasi-selection models, etc., is defined, \textit{mutatis mutandis}, as in Definition~\ref{Definition:SelectionValidity}. 
We forego any extensive critique of the quasi-selection function semantics here, but we think it is clear that having selection functions take both formulas and variable assignments as arguments gives it a decidedly syntactic flavor, contrary to the spirit of Stalnaker's project. 


\subsection{Ordering Semantics}
\label{Subsection:Ordering Semantics}

In this section, we briefly present an alternative \emph{ordering} (or \emph{comparative similarity}) semantics for conditional logic. This semantics has been presented in two somewhat different forms in \cite{Lewis1971} and \cite[\S2.3]{Lewis1973}.
\footnote{
    In both cases, the semantics is only presented for propositional conditional logic. 
    As far as we are aware, there is no discussion in the literature of this semantics in connection with quantified conditional logic.
    } 
Modulo minor details, we more closely follow the original presentation below.

\begin{definition}
\label{Definition:Ordering frame}
    An \emph{ordering frame} is a quintuple $\tup{W,R,\preceq,D,d}$, where $W$, $R$, $D$, and $d$ are as in Definition~\ref{Definition:Set Selection Frame}, and for every $w\in W$, $\preceq_{w}\subseteq R(w)\times R(w)$. 
    An \emph{ordering model} is a sextuple $\tup{W,R,\preceq,D,d,I}$ where $\tup{W,R,\preceq,D,d}$ is an ordering frame and $I$ is an interpretation in the sense of Definition~\ref{Definition:SelectionModel}. 
\end{definition}

\begin{definition}
    \label{Definition:Stalnakerian Order}
    An ordering frame $\tup{W,R,\preceq,D,d}$ is \emph{Stalnakerian} just in case it satisfies these conditions for all $S\subseteq W$, $w \in W$:
    \begin{enumerate}
        \item $w\in R(w)$ \hfill (Reflexivity)
        \item $\forall x, y, z \in R(w): x\preceq_{w} y$ and $y\preceq_{w}z \Rightarrow x\preceq_{w}z$ \hfill (Transitivity)
        \item $\forall x, y \in R(w): x\preceq_{w} y$ or $y\preceq_{w}x$ \hfill (Strongly Connected)
        \item $x\in R(w) \Rightarrow w\preceq_{w}x$ \hfill (Weak Centering)
        \item $x\preceq_{w}w \Rightarrow x=w$ \hfill (Strong Centering)
        \item $S\cap R(w)= \varnothing$ or $\exists{x}\in S\cap R(w): \forall{y}\in S(y\preceq_{w}x \Rightarrow x=y)$ \hfill (SLA)
    \end{enumerate}
    An ordering frame is \emph{Lewisian} if it satisfies all of the conditions above except, possibly, SLA. 
\end{definition}


\cite[p.~77]{Lewis1971} restricts SLA for ordering semantics so as to only apply to expressible propositions, whereas we have presented it in full generality. 
The frames which satisfy all the conditions except possibly SLA characterize Lewis's preferred conditional logic, $\mathsf{C1}$ (alias: $\mathsf{VC}$), on which basis we call them ``Lewisian'' (see \cite[p.~83]{Lewis1971}).

The definitions of \emph{ordering model} and \emph{variable assignment} are, mutatis mutandis, the same as Definition~\ref{Definition:SelectionModel} and \ref{Definition:VariableAssignment}, respectively. 
For a given ordering model $\mathcal{M} = \tup{W,R,\preceq,D,d,I}$, world $w\in W$, and variable assignment $g$, $\Vdash$ is defined as in Section~\ref{Subsection:Set Selection Function Semantics} except:
\begin{itemize}
    \item[$\would$.] $\mathcal{M},w,g \Vdash \phi \would \psi$ iff $[\phi]^{g}\cap R(w)=\varnothing$ or $\exists{x}\in [\phi]^{g}\cap R(w): \forall{y}\in [\phi]^{g}(y\preceq_{w}x \Rightarrow y \in [\psi]^{g})$.
\end{itemize}
Validity with respect to models, frames, etc., is defined essentially as before (Definition~\ref{Definition:SelectionValidity}), and the notation is extended in the obvious way. 

The truth condition for $\would$ reduces to something like the selection semantics given SLA (i.e., if $\preceq_w$ is well-founded on $R(w)$). 
That is, defining $\min_{\preceq_w}(S):=\{x\in S\cap{R(w)}:\forall{y\in{S\cap{R(w)}}}(x\preceq_{w}y)\}$, we have:
\begin{itemize}
    \item[$\would$.] Given SLA: $\mathcal{M},w,g \Vdash \phi \would \psi$ iff $\min_{\preceq_w}([\phi]^g) \subseteq [\psi]^g$.
\end{itemize}
Thus, Stalnakerian selection models and Stalnakerian ordering models are equivalent. By defining $f(P,w) \coloneqq \min_{\preceq_w}(P)$, one can show that every Stalnakerian ordering model is equivalent to a Stalnakerian selection model. Conversely, by defining $\preceq_w \ \coloneqq \{\tup{v,u} \mid v \in f(\{v,u\},w)\}$, one can show that every Stalnakerian selection model is equivalent to a Stalnakerian ordering model.

On the other hand, Lewisian ordering models are more general than even weakly Stalnakerian models, even taking $f(P,w) \coloneqq \min_{\preceq_w}(P)$. 
For one, Lewisian ordering models do not require $\preceq_w$ to be a linear order.\footnote{In fact, in Lewisian ordering frames, $\preceq_w$ need not even be a partial order. However, as SLA implies antisymmetry ($x\preceq_w y$ and $y\preceq_w x$ imply $x=y$), Stalnakerian orders must be linear.} 
This is the analogue of the Uniqueness constraint on $f$, which corresponds to CEM, $(\phi \would \psi) \vee (\phi \would \neg\psi)$, in the selection semantics, and is not valid over the class of Lewisian ordering frames. 
Moreover, and perhaps more importantly for our purposes, Lewisian ordering models allow for violations of even WLA. 
In these cases, the constraint on selection functions that is typically violated for $\min_{\preceq_w}$ is Uniformity, which corresponds to CSO, $((\phi \would \psi) \wedge (\psi \would \phi) \wedge (\phi \would \chi)) \then (\psi \would \chi)$, in the selection function semantics (for further discussion, see Section~\ref{Section:Proof of Incompleteness}). 
Despite this, CSO is still valid over Lewisian ordering frames, thanks to the way the truth conditions for $\would$ are defined. 

\section{Quantified Conditional Logics}
\label{Section:Logics}

The aim of this paper is to establish incompleteness for a range of quantified conditional logics. 
Here, we present the original logic from \cite{StalnakerThomason1970}, as well as several variants. 

\begin{definition}[Abbreviations]
    We write $\phi[y_1,\dots,y_n/x_1,\dots,x_n]$ for the result of simultaneously substituting all free occurrences of $x_1,\dots,x_n$ in $\phi$ with $y_1,\dots,y_n$, replacing bound occurrences of $y_i$ with distinct variables as needed. 
    
    Where $\alpha_1,\dots,\alpha_n$ are formulas, we write $\vec{\alpha}$ for the sequence $\tup{\alpha_1,\dots,\alpha_n}$. 
    We write $\vec{\alpha} \would \phi$ for the right-nested conditional with antecedents $\alpha_1,\dots,\alpha_n$:
    \begin{align*}
        \vec{\alpha} \would \phi & \coloneqq (\alpha_1 \would (\alpha_2 \would (\cdots \would (\alpha_n \would \phi)\cdots)))
    \end{align*}
    In the case where $\vec{\alpha} = \tup{}$, we define $\vec{\alpha} \would \phi$ to just be $\phi$. 
\end{definition}

\cite[pp.\,30--31]{StalnakerThomason1970} study the following formal system in the language $\mathcal{L}_=$, which we label $\mathsf{QST}$ (what Stalnaker and Thomason label $\mathsf{CQ}$).
\footnote{
    Technically, Stalnaker and Thomason work in a language with both variables and (non-rigid) constants. 
    Hence, the presentation of this logic invokes $t$ and $s$ for arbitrary terms. 
    The presence of non-rigid constants does not substantively affect any of the results to follow, so we work with a system without such constants for simplicity. 
    } 

\begin{definition}
\label{def:ST}
    The logic $\mathsf{QST}$ in the language $\mathcal{L}_=$ is defined by the following axioms and rules:\footnote{\cite[p.~25]{StalnakerThomason1970} define $\nec\phi:=\neg\phi\would\phi$, which is equivalent to our abbreviation (see Definition~\ref{def:lang}) in any normal conditional logic containing the schema $\phi\would\phi$ (a fortiori, in \textsf{QST}).} 
    \begin{enumerate}
        \item $\phi$, where $\phi$ is a substitution instance of a propositional tautology
        \item $\nec(\phi \then \psi) \then (\nec\phi \then \nec\psi)$
        \item $\nec(\phi \then \psi) \then (\phi \would \psi)$
        \item $\pos\phi \then (\phi \would \psi \then \neg(\phi \would \neg\psi))$
        \item $(\phi \would (\psi \vee \chi)) \then ((\phi \would \psi) \vee (\phi \would \chi))$
        \item $(\phi \would \psi) \then (\phi \then \psi)$
        \item $((\phi \would \psi) \wedge (\psi \would \phi)) \then ((\phi \would \chi) \then (\psi \would \chi))$
        \item $\forall x\phi \then (\exists x\nec x=t \then \phi[t/x])$
        \item $\forall x (\exists y \nec y=x \then \phi) \then \forall x \phi$
        \item $s=s$
        \item $s=t\then (\phi \then \phi')$ where $\phi'$ is the result of replacing zero or more occurrences of $t$ with $x$ and where no occurrence of $t$ in $\phi$ that is replaced by $s$ in $\phi'$ occurs in the scope of a modal
        \item $\pos x=y \then \nec x=y$
        \item if $\vdash \phi$ and $\vdash \phi \then \psi$, then $\vdash \psi$
        \item if $\vdash \phi$ then $\vdash \psi \would \phi$
        \item if $\vdash \psi \then \phi$ and $\psi$ has no free occurrences of $x$, then $\vdash \psi \then \forall x\phi$
        \item if $\vdash \vec{\alpha} \would \phi$ and $x$ does not occur free in any $\alpha_i$, then $\vdash \vec{\alpha} \would \forall x\phi$
        \item if $\vdash \vec{\alpha} \would (\phi \would t \neq x)$ and $x$ does not occur free in any $\alpha_i$ or in $\phi$, then $\vdash \vec{\alpha} \would \neg\phi$.
        \footnote{
            In a framework without non-rigid constants, rule 17 is admissible given the other axioms and rules. 
            This effectively requires showing the following (by induction on the length of proofs): if $\vdash \phi[y/x]$, then $\vdash \phi[z/x]$. 
            Thus, if $\vdash \vec{\alpha} \would (\phi \would y \neq x)$ where $x$ doesn't occur free in $\vec{\alpha}$ or $\phi$, then $\vdash \vec{\alpha} \would (\phi \would y \neq y)$, which implies $\vdash \vec{\alpha} \would \neg\phi$. 
            } 
    \end{enumerate}
\end{definition}

\cite{StalnakerThomason1970} prove that $\mathsf{QST}$ is sound and complete with respect to certain variable domain selection frames in which selection functions take formulas as arguments rather than propositions (see Definition~\ref{Definition:Quasi Selection Frame}).

\begin{theorem}[\cite{StalnakerThomason1970}]
\label{thm:QST-comp}
    $\mathsf{QST}$ is sound and complete in $\mathcal{L}_=$ for the class of Stalnakerian quasi-selection models. 
\end{theorem}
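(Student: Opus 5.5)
Soundness will go by a routine induction on the length of derivations, and completeness by a Henkin-style canonical model construction. That construction is where the quasi-selection semantics earns its keep: because the quasi-selection function takes formulas (together with assignments) as arguments, we never have to extend it to inexpressible propositions.

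\emph{Soundness.} We check each axiom 1--12 on an arbitrary Stalnakerian quasi-selection model, and check that rules 13--17 preserve validity. The conditional axioms correspond directly to the frame conditions:
\begin{itemize}
    \item axiom 3 follows from $f(\phi,w,g)\subseteq R(w)$ together with LA;
    \item axiom 4 (together with the fact that $\Diamond$ has its Kripke truth condition) follows from LA;
    \item axioms 5 and 4 use Uniqueness;
    \item axiom 6 uses Centering;
    \item axiom 7 uses Uniformity, applied to $f(\phi,w,g)$ and $f(\psi,w,g)$ with the formula-level condition.
\end{itemize}
The quantifier and identity axioms 8--12 are checked in the variable-domain setting: $\exists x\nec x=t$ forces $g(t)\in d(w)$, and identity is rigid. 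Rules 16 and 17 need the observation that if $x$ is not free in $\vec\alpha$, then varying $g$ at $x$ does not change the sets selected along the chain $\vec\alpha$. For this, the quasi-selection function must be required (as in D3.7) to depend only on the values of the free variables of its formula argument.

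\emph{Completeness.} Take a non-theorem $\phi$ and extend $\{\neg\phi\}$ to a maximal consistent set in a language enlarged with countably many fresh variables. We would require \emph{saturated} sets, with witnesses both for $\exists$ and for the rule-16/17 style conditions: whenever $\neg(\vec\alpha\would\forall x\psi)\in\Gamma$, there is a fresh $y$ with $\neg(\vec\alpha\would\psi[y/x])\in\Gamma$. The canonical frame is then built as follows:
\begin{itemize}
    \item worlds are saturated maximal consistent sets;
    \item $\Gamma R\Delta$ iff $\{\psi:\nec\psi\in\Gamma\}\subseteq\Delta$;
    \item $f(\alpha,\Gamma,g)$ is the unique saturated $\Delta$ with $\{\psi:\alpha\would\psi\in\Gamma\}\subseteq\Delta$ if $\Diamond\alpha\in\Gamma$, and $\varnothing$ otherwise;
    \item the domain consists of the variables modulo provable identity, and $d(\Gamma)$ is the set of $[y]$ with $\exists x(x=y)\in\Gamma$.
\end{itemize}
Uniqueness of $\Delta$ comes from axiom 5 and CEM. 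The Stalnakerian conditions then follow from axioms 3, 4, 6, and 7. The truth lemma is proved by induction on formulas, with the $\would$ clause being immediate from the definition of $f$.

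\emph{Main obstacle.} The hard step is the saturation (Lindenbaum) lemma: showing that the set $\{\psi:\alpha\would\psi\in\Gamma\}$ extends to a saturated set \emph{inside the same global domain}, so that quantifiers behave correctly at selected worlds. Without this, the $\forall$ case of the truth lemma fails. Here rules 16 and 17 do the work, since they let us add witnesses $\neg(\vec\alpha\would\psi[y/x])$ consistently. Our plan is to build all worlds at once using a single enumeration of witness tasks indexed by finite antecedent sequences $\vec\alpha$, in the style of Thomason's completeness proofs for quantified modal logic, and then verify saturation for each selected set.
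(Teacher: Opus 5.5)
\emph{Note that the paper gives no proof of its own here; it cites Stalnaker and Thomason.} Your architecture is the natural one: a canonical model whose selection function is only ever consulted on formulas, with Henkin witnesses propagated along finite antecedent chains. Your remark that $f$ must depend only on the values of the free variables of its argument is correct and genuinely needed; without it, even $\psi\then\forall x\psi$ fails for closed conditionals $\psi$.

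\textbf{The key step is misdescribed and its main ingredient is missing.} When $\pos\alpha\in\Gamma$, axiom 4, conditional K and CEM make $\Gamma/\alpha=\{\psi:\alpha\would\psi\in\Gamma\}$ already maximal consistent. So there is nothing to extend: it either is saturated or your $f$ is undefined. What you actually need is two things.
\begin{enumerate}
\item[(i)] Inheritance, which takes one line. If $\neg(\vec\beta\would\forall x\psi)\in\Gamma/\alpha$, axiom 4 gives $\neg(\alpha\would(\vec\beta\would\forall x\psi))\in\Gamma$. Saturation of $\Gamma$ for the chain $\tup{\alpha,\vec\beta}$ gives a witness, and CEM moves it back into $\Gamma/\alpha$.
\item[(ii)] A Lindenbaum construction for the base world alone.
\end{enumerate}
For (ii), rule 16 does not do the work as stated. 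It applies only to theorems, whereas adding a witness to a partially built $\Gamma_n$ needs the version with a side hypothesis: from $\vdash\chi\then(\vec\beta\would\theta)$, with $y$ not free in $\chi$ or $\vec\beta$, infer $\vdash\chi\then(\vec\beta\would\forall y\theta)$. This is how the paper states rule 27$^{\mathsf{v}}$ of $\mathsf{QC2^v_{\ex}}$. In $\mathsf{QST}$ it must be derived, and the missing idea is to internalize $\chi$ as an antecedent. Rule 14, $\chi\would\chi$ and conditional K give $\vdash\chi\would(\vec\beta\would\theta)$; rule 16 then applies to the chain $\tup{\chi,\vec\beta}$; axiom 6 detaches.

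The witnesses must also carry an existence guard, $\neg(\vec\beta\would(E(y)\then\psi[y/x]))$, obtainable via axiom 9. With your unguarded witnesses, $\neg\forall x\psi\in\Delta$ need not be witnessed by any $[y]\in d(\Delta)$, so the $\forall$ case of the truth lemma fails. Rule 17 plays no role without non-rigid constants.

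\textbf{The canonical model is not well defined as written.} Your $f(\alpha,\Gamma,g)$ ignores $g$. Take $g(x)=[y]$: nothing puts $P(y)$ into $\Gamma/P(x)$, so Success fails; similarly LA fails when $\neg\pos P(x)\in\Gamma$ but $P(y)$ holds at an accessible world. The $\forall$ case of the truth lemma, which evaluates conditional subformulas under $g^x_{[y]}$, also cannot be carried out. You need $f(\alpha,\Gamma,g):=\{\Gamma/\alpha^\sigma\}$, where $\sigma$ is a variable substitution realizing $g$ on the free variables of $\alpha$. This is well defined because identity between variables is rigid. You then need a truth lemma of the form $\Gamma,g\Vdash\phi$ iff $\phi^\sigma\in\Gamma$.

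Likewise, ``variables modulo provable identity'' is just the set of variables, since distinct variables are never provably identical. Different saturated sets disagree about $x=y$, yet $=$ must denote genuine identity on a single global $D$. Fix the base world $\Gamma_0$ and quotient by $x=y\in\Gamma_0$. Take as worlds only the sets $\Gamma_0/\alpha_1/\cdots/\alpha_n$, each step taken only when $\pos\alpha_i$ holds at the previous stage. By axiom 12, these agree with $\Gamma_0$ on all identities and non-identities.
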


For simplicity but also to show the robustness of our result, in what follows, we will largely focus on other systems related to $\mathsf{QST}$. 
These systems will be easier to generalize to frameworks beyond the one studied by Stalnaker and Thomason (e.g., to constant domains, to languages without identity, etc.).  

The simplest system, which we will spend the most time working with, is the system \textsf{QC2}. 
This system is effectively the result of combining the propositional conditional logic \textsf{C2} with the standard axioms and rules for classical first-order quantification. 
More precisely:

\begin{definition}
\label{def:QC2}
    The logic $\mathsf{QC2}$ in the language $\mathcal{L}$ is defined by the following axioms and rules:
    \begin{enumerate}
    \setcounter{enumi}{17}
        \item $\phi$, where $\phi$ is a substitution instance of a propositional tautology\label{ax:PL}
        \item $\phi \would \phi$\label{ax:id}
        \item $((\phi \would \psi) \wedge (\psi \would \phi) \wedge (\phi \would \chi)) \then (\psi \would \chi)$\label{ax:cso}
        \item $(\phi \would \psi) \then (\phi \then \psi)$\label{ax:cen}
        \item $(\phi \would \psi) \vee (\phi \would \neg\psi)$\label{ax:cem}
        \item $\forall x\phi \then \phi[y/x]$\label{ax:uelim-c}
        \item $\forall x(\phi \would \psi) \then (\phi \would \forall x \psi)$ where $x$ isn't free in $\phi$\label{ax:barcan-gc}
        \item if $\vdash \phi$ and $\vdash \phi \then \psi$, then $\vdash \psi$\label{rule:mp}
        \item if $\vdash (\psi_1 \wedge \cdots \wedge \psi_n) \then \chi$, then $\vdash ((\phi \would \psi_1) \wedge \cdots \wedge (\phi \would \psi_n)) \then (\phi \would \chi)$\label{rule:cnec}
        \item if $\vdash \psi \then \phi[y/x]$, where $y$ doesn't occur free in $\psi$ or $\forall x\phi$, then $\vdash \psi \then \forall x\phi$.\label{rule:uintro-c}
    \end{enumerate}
    The logic $\mathsf{QC2_=}$ in the language $\mathcal{L}_=$ is the result of extending $\mathsf{QC2}$ with the following axioms:
    \begin{enumerate}
    \setcounter{enumi}{27}
        \item $x=x$\label{ax:reflx}
        \item $x = y \then (\phi \equiv \phi[y/x])$\label{ax:subid}
        \item $x \neq y \then \nec x \neq y$\label{ax:necdec}
    \end{enumerate}
\end{definition}

Briefly, $\mathsf{QC2}$ is the globally constant domain analogue of $\mathsf{QST}$ without identity, while $\mathsf{QC2_=}$ is the globally constant domain analogue of $\mathsf{QST}$ with identity.
(We don't bother including ``$\mathsf{QC2_{\textit{E}}}$'', which would just add $Ex$ as an axiom to $\mathsf{QC2}$ formulated in $\mathcal{L}_{E}$.) 

One way to see that these logics are the globally constant domain analogues of $\mathsf{QST}$ is to note the following theorem, the proof of which we leave as an exercise to the reader.
\footnote{
    Observe that rule 16 of $\mathsf{QST}$ can be shown to be admissible using axiom~\ref{ax:barcan-gc} and rule~\ref{rule:uintro-c} of $\mathsf{QC2}$. 
    } 

\begin{theorem}
\label{thm:QC2-comp}
    $\mathsf{QC2}_{(=)}$ is sound and complete in $\mathcal{L}_{(=)}$ for the class of globally constant domain Stalnakerian quasi-selection models. 
\end{theorem}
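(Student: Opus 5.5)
Soundness and completeness split as usual, and in both directions I would treat $\mathsf{QC2}$ and $\mathsf{QC2_=}$ in parallel, checking the identity axioms~\ref{ax:reflx}--\ref{ax:necdec} separately.

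\textbf{Soundness.} I would verify each axiom on globally constant domain Stalnakerian quasi-selection models and check that each rule preserves validity. Axiom~\ref{ax:id} follows from Success. Axiom~\ref{ax:cen} follows from Weak Centering; with Uniqueness this becomes Strong Centering. Axiom~\ref{ax:cem} follows from Uniqueness. Axiom~\ref{ax:cso} follows from Uniformity, but only in the form $f(\phi,w,g)=f(\psi,w,g)$ whenever $f(\phi,w,g)\subseteq[\psi]^g$ and $f(\psi,w,g)\subseteq[\phi]^g$; here one must read the quasi-selection Uniformity condition \emph{mutatis mutandis} on formulas relative to $[\cdot]^g$.

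Axiom~\ref{ax:uelim-c} holds because $d(w)=D$, so every value of $g(y)$ lies in the quantifier domain. Axiom~\ref{ax:barcan-gc} needs care. Since $x$ is not free in $\phi$, we need $f(\phi,w,g)=f(\phi,w,g^x_a)$ for every $a$. I would secure this either by noting that the relevant condition can be imposed on quasi-selection functions (dependence only on $g$ restricted to the free variables), or by using Uniformity: $[\phi]^g=[\phi]^{g^x_a}$, and Success then gives mutual inclusion, so the two selections coincide.

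For the rules, rule~\ref{rule:cnec} is immediate from the subset clause, and rule~\ref{rule:uintro-c} is standard given constant domains. For identity, axiom~\ref{ax:subid} needs the substitution lemma $[\phi[y/x]]^g=[\phi]^{g^x_{g(y)}}$, together with the same independence of $f$ from irrelevant variables. Axiom~\ref{ax:necdec} then holds because $\nec$ gets Kripke truth conditions under LA and identity is rigid.

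\textbf{Completeness.} I would run a canonical Henkin construction. First, extend the language with countably many fresh variables, or reuse infinitely many variables by renaming. Let $W$ be the set of maximal $\mathsf{QC2}$-consistent sets with the $\forall$-property: if $\neg\forall x\phi\in\Gamma$, then $\neg\phi[y/x]\in\Gamma$ for some $y$. Take $D$ to be the set of variables (or, with identity, equivalence classes under $=$), and let $g$ be the identity assignment. Set $R(\Gamma)=\{\Delta : \nec\phi\in\Gamma\Rightarrow\phi\in\Delta\}$ and
\[
f(\phi,\Gamma,g)=\{\Delta\in R(\Gamma) : \phi\in\Delta \text{ and } \{\psi:\phi\would\psi\in\Gamma\}\subseteq\Delta\}.
\]
This is the point where quasi-selection functions are indispensable, since $f$ is allowed to depend on the formula $\phi$ itself.

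\textbf{Main obstacle.} The key lemma is the truth lemma for $\would$: whenever $\phi\would\psi\notin\Gamma$, there must be a suitable $\Delta$ that also has the $\forall$-property. So I would need to show that
\[
\{\phi\}\cup\{\chi:\phi\would\chi\in\Gamma\}\cup\{\neg\psi\}
\]
extends to a Henkin-saturated maximal consistent set. Witnessing $\neg\forall x\chi$ inside this set relies on axiom~\ref{ax:barcan-gc} combined with rule~\ref{rule:uintro-c}; this is exactly the role of the Barcan-type axiom, and I expect it to be the main obstacle. Nested conditionals compound the difficulty, because witnesses must be supplied inside the scopes of iterated antecedents. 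I would handle this with a standard countable enumeration that adds witnesses stepwise.

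The remaining frame conditions would then be checked in turn. Uniqueness follows from CEM, which makes $f$ at most a singleton after a suitable choice, or at least forces all selected worlds to agree on every formula, after which one picks one. Uniformity follows from CSO, Success from axiom~\ref{ax:id}, Centering from axiom~\ref{ax:cen}, and LA from the definition of $\nec$. For $\mathsf{QC2_=}$, one quotients by the relation $x=y\in\Gamma$, which is world-independent by axiom~\ref{ax:necdec} and its converse. Finally, the restriction of $f$ to $R(\Gamma)$ is built into the definition.
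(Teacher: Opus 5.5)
Your route, a Henkin canonical model in which axiom~\ref{ax:barcan-gc} keeps selected worlds saturated, is the one the paper points to. The paper gives no proof; its only hint is that Stalnaker and Thomason's nested rule 16 is admissible from axiom~\ref{ax:barcan-gc} and rule~\ref{rule:uintro-c}, so their construction can be rerun for constant domains. Your soundness remark is correct and goes beyond the paper's ``mutatis mutandis''. If Uniformity is imposed only for a fixed assignment, axiom~\ref{ax:barcan-gc} fails: let $f(\phi,w,g)$, for a closed $\phi$, pick different $\phi$-worlds according to $g(x)$. So one of your cross-assignment conditions has to be part of the model class.

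\textbf{The gap: your canonical $f$ ignores $g$.} The $\forall$-step of the truth lemma evaluates subformulas at $x$-variants of the identity assignment, so $f$ is needed at every $g$. With your definition, Success already fails there. $f(G(x),\Gamma,g)$ consists of sets containing $G(x)$, whereas $[G(x)]^g$ consists of sets containing $G(y)$ when $g(x)=y$. Accordingly, $\forall x(G(x)\would H(x))$ would be evaluated as ``$G(x)\would H(y)$ for every $y$''.

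\textbf{The fix.} Since $D$ consists of variables, $g$ is itself a substitution.
\begin{enumerate}
\item Put $f(\phi,\Gamma,g):=f(\phi^g,\Gamma,\mathrm{id})$, where $\phi^g$ replaces each free $z$ by $g(z)$, renaming bound variables as needed. With identity, choose representatives; axiom~\ref{ax:subid} makes the choice harmless.
\item Prove that $\Gamma,g\Vdash\phi$ iff $\phi^g\in\Gamma$, for every $g$.
\end{enumerate}
This $f$ depends only on $\phi^g$, so it satisfies the condition your soundness argument imposes. Cross-assignment Uniformity then follows from CSO.

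\textbf{Two further points.}

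First, your ``main obstacle'' largely dissolves. Let $\Delta_\phi=\{\chi:\phi\would\chi\in\Gamma\}$. By CEM and rule~\ref{rule:cnec}, $\Delta_\phi$ is itself maximal consistent when $\phi\would\bot\notin\Gamma$, and it lies in $R(\Gamma)$ by MOD. It is also saturated. Suppose $\phi\would\neg\forall x\chi\in\Gamma$, with $x$ not free in $\phi$ after renaming, but no $\phi\would\neg\chi[y/x]$ is in $\Gamma$.
\begin{enumerate}
\item CEM puts every $\phi\would\chi[y/x]$ in $\Gamma$.
\item Saturation of $\Gamma$ then gives $\forall x(\phi\would\chi)\in\Gamma$.
\item Axiom~\ref{ax:barcan-gc} gives $\phi\would\forall x\chi\in\Gamma$, hence $\phi\would\bot\in\Gamma$, a contradiction.
\end{enumerate}
So $f(\phi,\Gamma,\mathrm{id})$ is either $\{\Delta_\phi\}$ or $\varnothing$. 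This gives Uniqueness outright. It also gives LA, since $\phi\would\bot\in\Gamma$ yields $\nec\neg\phi\in\Gamma$ by replacing the antecedent with $\neg\neg\phi$ via CSO. No enumeration and no nested witnesses are needed. The $\vec{\alpha}$-prefixed form is needed only in $\mathsf{QST}$, which lacks the Barcan-type axiom.

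Second, for $\mathsf{QC2_=}$ the relation ``$x=y\in\Gamma$'' is not world-independent across all saturated sets. Restrict $W$ to the saturated sets that agree with the root on every identity. $R$ preserves this set by axiom~\ref{ax:necdec} and its converse, and hence so does $f$.
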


\noindent In other words, $\mathsf{QC2}_{(=)}$ is sound and complete for the class of models where selection functions take formulas (and variable assignments) as inputs, rather than propositions. 
Thus, $\mathsf{QST}$ has the same status towards variable domains that $\mathsf{QC2}_=$ has towards globally constant domains. 

Our first incompleteness result will pertain to $\mathsf{QC2}$. 
However, we also prove corresponding incompleteness results for $\mathsf{QC2}_=$ as well as the variable domain analogues of $\mathsf{QC2}$ defined below.

\begin{definition}
\label{def:QC2-v}
    The logic $\mathsf{QC2^v_{\textit{E}}}$ in $\mathcal{L}_E$ is defined by replacing axiom~\ref{ax:uelim-c} and rule~\ref{rule:uintro-c} in $\mathsf{QC2}$ with the following:
    \begin{enumerate}[label={\arabic*$^{\mathsf{v}}$.},itemsep=1mm]
    \setcounter{enumi}{22}
        \item $(\forall x\phi \wedge E(y)) \then \phi[y/x]$\label{ax:uelim-v}
    \setcounter{enumi}{26}
        \item If $\vdash \psi \then (\vec{\alpha} \would (E(y) \supset \phi[y/x]))$, where $y$ does not occur free in $\alpha_1,\dots,\alpha_n$, $\psi$, or $\forall x\phi$, then $\vdash \psi \then (\vec{\alpha} \would \forall x\phi)$.\label{rule:uintro-v}
    \end{enumerate}
    The logic $\mathsf{QC2^c_{\textit{E}}}$ in $\mathcal{L}_E$ is the result of extending $\mathsf{QC2^v_{\textit{E}}}$ with the following axioms:
    \begin{enumerate}[label={\arabic*$^{\mathsf{c}}$.}]
    \setcounter{enumi}{30}
        \item $E(x) \then \nec{E(x)}$\label{ax:nondec}
        \item $\neg E(x) \then \nec\neg E(x)$\label{ax:noninc}
    \end{enumerate}
    The logics $\mathsf{QC2^v_=}$ and  $\mathsf{QC2^c_=}$ are defined as the result of adding axioms~\ref{ax:reflx}--\ref{ax:necdec} to $\mathsf{QC2^v_{\textit{E}}}$ and $\mathsf{QC2^c_{\textit{E}}}$ respectively. 
\end{definition}

Effectively, $\mathsf{QC2^v_{\textit{E}/=}}$ are the variable domain analogues of $\mathsf{QC2_{(=)}}$ while $\mathsf{QC2^c_{\textit{E}/=}}$ are the \emph{locally} constant domain analogues.
\footnote{
    Note that we have not axiomatized a logic ``$\mathsf{QC2^v}$'' for variable domain frames in the language $\mathcal{L}$ \emph{without} $E$ or $=$, or a logic ``$\mathsf{QC2^c}$'' for locally constant domain frames in $\mathcal{L}$. 
    We were unable to find natural logics for which one could prove completeness with respect to the corresponding classes of quasi-selection models. 
    This may seem somewhat surprising, but upon reflection, it is to be expected since proving completeness for quantified modal logics over variable domains in languages without $E$ or $=$ is notably more difficult than proving completeness in languages with these devices (cf.~\cite[pp.\ 304ff.]{HughesCresswell1996}). 
    } 
Again, we can see this by noting the following completeness theorem:

\begin{theorem}
\label{thm:QC2v-comp}
    $\mathsf{QC2^v_{\textit{E}/=}}$ is sound and complete in $\mathcal{L}_{E/=}$ for the class of variable domain Stalnakerian quasi-selection models. 
    $\mathsf{QC2^c_{\textit{E}/=}}$ is sound and complete in $\mathcal{L}_{E/=}$ for the class of locally constant domain Stalnakerian quasi-selection models. 
\end{theorem}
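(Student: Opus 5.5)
Soundness and completeness are handled separately, following the Stalnaker--Thomason strategy (Theorem~\ref{thm:QST-comp}) and the constant domain analogue (Theorem~\ref{thm:QC2-comp}).

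\textbf{Soundness.} I check each axiom and rule of $\mathsf{QC2^v_{\textit{E}/=}}$ against Stalnakerian quasi-selection models. The propositional conditional axioms~\ref{ax:id}--\ref{ax:cem} follow from Success, Centering, Uniformity and Uniqueness, just as in the propositional case. The only point to watch is that the quasi-selection function takes a formula together with an assignment, so Uniformity must be read with $[\phi]^g$ and $[\psi]^g$ as the relevant sets. Axiom~\ref{ax:uelim-v} holds because $\forall$ ranges over $d(w)$ and $E(y)$ places $g(y)$ in $d(w)$. Rule~\ref{rule:uintro-v} is verified by induction on the length of $\vec{\alpha}$. The key fact is that since $y$ is not free in any $\alpha_i$, $\psi$, or $\forall x\phi$, changing $g(y)$ leaves $f(\alpha_i,\cdot,g)$ unchanged. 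This requires a lemma that the quasi-selection function depends only on the values of free variables of its formula argument; alternatively, I restrict to models satisfying this, which the canonical model does. Axiom~\ref{ax:barcan-gc} needs the same invariance. For the locally constant class, axioms~\ref{ax:nondec} and~\ref{ax:noninc} are valid because $\nec$ has its Kripke meaning in Stalnakerian models and domains agree along $R$. For $=$, axioms~\ref{ax:reflx}--\ref{ax:necdec} are immediate, since identity is interpreted rigidly as identity on $D$.

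\textbf{Completeness.} I build a canonical model out of maximal consistent sets that are \emph{$\forall$-saturated in the conditional sense}. A set $\Gamma$ qualifies when, for every $\vec{\alpha}$ and $\forall x\phi$ with $\neg(\vec{\alpha}\would\forall x\phi)\in\Gamma$, there is a witness $y$ with $\neg(\vec{\alpha}\would(E(y)\then\phi[y/x]))\in\Gamma$. Rule~\ref{rule:uintro-v} is exactly what makes such witnesses consistently addable. I first expand the language with countably many fresh variables, then perform a Lindenbaum construction that handles these ``nested witness'' requirements, so every saturated set has saturated successors. The components are defined as follows.
\begin{itemize}
    \item Worlds are the saturated sets.
    \item $wRv$ holds iff $\{\chi : \nec\chi\in w\}\subseteq v$.
    \item $f(\phi,w,g_0)$ is $\{v\}$, where $v$ is the unique world containing $\{\psi:\phi\would\psi\in w\}$, and is empty when $\phi\would\bot\in w$. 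Here $g_0$ is the canonical assignment, and other assignments are handled by substitution.
    \item $D$ is the set of variables, or their $=$-equivalence classes in the identity case.
    \item $d(w)=\{y : E(y)\in w\}$.
\end{itemize}
Uniqueness of $v$ comes from CEM (axiom~\ref{ax:cem}) together with the saturation property. The truth lemma is then proved by induction, with the $\forall$ and $\would$ cases using saturation. Stalnakerian conditions are read off from axioms~\ref{ax:id}--\ref{ax:cem}. LA follows from the definition of $R$ via $\nec$. In the $\mathsf{QC2^c}$ case, local constancy follows from axioms~\ref{ax:nondec} and~\ref{ax:noninc}.

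\textbf{Main obstacle.} The hardest step is the Lindenbaum/saturation construction: proving that the conditional-successor set $\{\psi:\phi\would\psi\in w\}$ extends to a \emph{saturated} world. The successor's witnesses must be available inside $w$ as nested conditionals $\vec{\alpha}\would\ldots$, and this is precisely the reason the rule is stated with arbitrary prefixes $\vec{\alpha}$. I would follow Stalnaker--Thomason's construction closely, enumerating all pairs (prefix, universal formula) and adding witnesses one at a time. The $=$ case additionally requires quotienting by identity, which is well defined thanks to axioms~\ref{ax:subid} and~\ref{ax:necdec}.
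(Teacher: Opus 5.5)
The paper gives no proof of this theorem. It is asserted by analogy with Theorem~\ref{thm:QC2-comp}, whose proof is left as an exercise. Your Stalnaker--Thomason-style canonical model is the expected route, and the completeness half is broadly right.

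The genuine gap is your soundness claim for axiom~\ref{ax:barcan-gc} in the $\mathsf{v}$ systems. You say it ``needs the same invariance,'' but the conditional Barcan formula $\forall x(\phi\would\psi)\then(\phi\would\forall x\psi)$ fails whenever domains can grow along $f$, invariance or not. Take the following model:
\begin{itemize}
\item $W=\{w,v\}$ with $R$ universal;
\item $f(P,u)$ is $\{u\}$ if $u\in P$, the other world if only it lies in $P$, and $\varnothing$ if $P=\varnothing$. This is a Stalnakerian function on propositions, so it is trivially assignment-invariant;
\item $D=\{a,b\}$, $d(w)=\{a\}$, $d(v)=\{a,b\}$;
\item $I(Q,v)=D$, $I(Q,w)=\varnothing$, $I(P,v)=\{a\}$.
\end{itemize}
At $w$ (with $z\neq x$), $\forall x(Q(z)\would P(x))$ is true but $Q(z)\would\forall x P(x)$ is false.

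Definition~\ref{def:QC2-v} replaces only axiom~23 and rule~27, so $\mathsf{QC2^v_{\ex/=}}$ still contains axiom~\ref{ax:barcan-gc}. Hence the soundness half of the $\mathsf{v}$ claim is false as literally stated, and no argument can deliver it. It holds only once axiom~\ref{ax:barcan-gc} is dropped from the $\mathsf{v}$ systems. That is also what the asserted equivalence with $\mathsf{QST}$, which has no Barcan principle, requires.

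For the $\mathsf{c}$ systems the axiom is sound, by locally constant domains plus $f(\phi,w,g)\subseteq R(w)$. It is in fact redundant, with fresh $y$:
\begin{itemize}
\item $23^{\mathsf{v}}$ and rule~\ref{rule:cnec} give $(E(y)\wedge\forall x(\phi\would\psi))\then(\phi\would(E(y)\then\psi[y/x]))$;
\item $32^{\mathsf{c}}$, MOD and rule~\ref{rule:cnec} give $\neg E(y)\then(\phi\would(E(y)\then\psi[y/x]))$;
\item rule $27^{\mathsf{v}}$ with prefix $\tup{\phi}$ then yields the axiom.
\end{itemize}

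Two further repairs are needed.

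First, invariance cannot be a lemma. The Stalnakerian conditions constrain $f(\cdot,\cdot,g)$ one assignment at a time. Suppose $f(\phi,u,g)$ picks one $\phi$-world when $g(x)=a$ and another when $g(x)=b$, with $x$ not free in $\phi$. Then even the instance $(\forall x(\phi\would P(x))\wedge E(y))\then(\phi\would P(y))$ of axiom $23^{\mathsf{v}}$ fails. Your check of that axiom silently uses a substitution lemma that needs invariance. So invariance must be built into the class as an amendment to Definition~\ref{Definition:Quasi Selection Frame}; your canonical model does satisfy it.

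Second, with $=$, quotienting the variables by identity is ill defined if the worlds are all saturated sets, because different maximal consistent sets disagree about $x=y$. Pass to the submodel generated from the root world, which is closed under $f$ by MOD. There, axioms~\ref{ax:subid} and~\ref{ax:necdec} make identity facts constant.

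Finally, the step you flag as hardest is not where the work lies. Given CEM, $\{\psi:\phi\would\psi\in w\}$ is already maximal consistent when $\phi\would\bot\notin w$. It also inherits saturation from $w$'s saturation under prefixes $\tup{\phi,\vec\beta}$. So the Lindenbaum construction, and rule $27^{\mathsf{v}}$, are needed only for the root world.
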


It follows from Theorem~\ref{thm:QC2v-comp} that $\mathsf{QC2^v_=}$ is provably equivalent to the logic $\mathsf{QST}$. 
Indeed, one can verify this directly (i.e., axiomatically) by showing that they contain each other. 
Again, we leave this as an exercise to the reader. 
An important step in this exercise is to recognize that the following, known as the MOD axiom (see, e.g., \cite[p.~160]{Nute1980}), is a theorem of $\mathsf{QC2}$ and each of its variations (use axioms~\ref{ax:cso} and axiom~\ref{ax:cem}):
\begin{align*}
    (\neg\phi \would \bot) \then (\psi \would \phi) \ \ (\text{i.e., } \nec\phi \then (\psi \would \phi))
\end{align*}
This axiom corresponds to the frame condition WLA from Section~\ref{Subsection:Set Selection Function Semantics}, i.e., for any selection frame $\mathcal{F}$, $\mathcal{F} \Vdash \nec\phi \then (\psi \would \phi)$ just in case $\mathcal{F}$ satisfies WLA.


Certain classes of weakly Stalnakerian frames are exactly characterized by $\mathsf{QC2}$ or its variants in the following sense:

\begin{proposition}
\label{Proposition:FrameCorrespondence}
    Let $\mathcal{F} = \tup{W,R,f,D,d}$ be a selection frame. 
    Then $\mathsf{QC2_{(=)}}$ is valid over $\mathcal{F}$ (i.e., $\mathcal{F} \Vdash \mathsf{QC2_{(=)}}$) just in case $\mathcal{F}$ is weakly Stalnakerian and globally constant. 
    Similarly, $\mathsf{QC2^v_{\ex/=}}$ is valid over $\mathcal{F}$ just in case $\mathcal{F}$ is weakly Stalnakerian, and $\mathsf{QC2^c_{\ex/=}}$ is valid over $\mathcal{F}$ just in case $\mathcal{F}$ is weakly Stalnakerian and locally constant. 
\end{proposition}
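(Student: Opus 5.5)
Each of the three biconditionals splits into two directions: soundness (the frame conditions validate the axioms and preserve the rules) and correspondence (validity of the logic forces the frame conditions). I will do $\mathsf{QC2}$ in full and then indicate the changes for the other systems.

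\emph{Soundness for $\mathsf{QC2}$.} Assume $\mathcal{F}$ is weakly Stalnakerian and globally constant. I check each axiom and rule in any model on $\mathcal{F}$.
\begin{itemize}
    \item Axiom~\ref{ax:id} is Success.
    \item Axiom~\ref{ax:cen} is Weak Centering.
    \item Axiom~\ref{ax:cem} is Uniqueness.
    \item Axiom~\ref{ax:cso}: suppose $f([\phi],w)\subseteq[\psi]$ and $f([\psi],w)\subseteq[\phi]$. Uniformity gives $f([\phi],w)=f([\psi],w)$, so $\phi\would\chi$ transfers to $\psi\would\chi$.
    \item Rule~\ref{rule:cnec} is immediate, since $\would$ is evaluated by inclusion of $f([\phi]^g,w)$ in a set.
    \item Axioms~\ref{ax:uelim-c} and~\ref{ax:barcan-gc} use $d(w)=D$ for all $w$. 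For~\ref{ax:barcan-gc}, the key point is that $[\phi]^{g^x_a}=[\phi]^g$ when $x$ is not free in $\phi$. So $f([\phi]^g,w)$ is a single set contained in every $[\psi]^{g^x_a}$, and hence contained in $[\forall x\psi]^g$ because the domain is constant.
    \item Rule~\ref{rule:uintro-c} follows from the usual substitution lemma.
\end{itemize}
For $\mathsf{QC2}_=$, axioms~\ref{ax:reflx}--\ref{ax:necdec} hold because identity is rigid. For axiom~\ref{ax:necdec} I write $\nec$ out as $x=y\would\bot$: if $g(x)\neq g(y)$, then $[x=y]^g=\varnothing$, and Success gives $f(\varnothing,w)=\varnothing$. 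This is where I must be careful not to use the Kripke reading of $\nec$, which fails on weakly Stalnakerian frames.

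\emph{Correspondence for $\mathsf{QC2}$.} Assume $\mathcal{F}\Vdash\mathsf{QC2}$. For an arbitrary $P\subseteq W$, choose a model with $I(p,v)=D$ iff $v\in P$, for a $0$-ary (or unary) predicate letter $p$. Then:
\begin{itemize}
    \item Axiom~\ref{ax:id} gives Success.
    \item Axiom~\ref{ax:cen} gives Weak Centering.
    \item Axiom~\ref{ax:cem} gives Uniqueness: if $u\neq v$ both lie in $f(P,w)$, interpret $q$ as $\{u\}$ and the instance fails.
    \item Axiom~\ref{ax:cso}, taking $\chi$ to be a predicate interpreted as $f(P,w)$, gives $f(Q,w)\subseteq f(P,w)$; by symmetry the two sets are equal.
\end{itemize}
Global constancy comes from axiom~\ref{ax:uelim-c}. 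If $a\in D\setminus d(w)$, interpret a unary $F$ as $D\setminus\{a\}$ at $w$ and assign $g(y)=a$. Then $\forall xF(x)$ holds at $w$ but $F(y)$ fails. Identity adds no new conditions.

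\emph{The variable-domain systems.} For $\mathsf{QC2^v_{\ex/=}}$ the soundness argument is the same except for axiom~\ref{ax:uelim-v} and rule~\ref{rule:uintro-v}. Rule~\ref{rule:uintro-v} is the main obstacle. Because $y$ is not free in $\vec\alpha$ or $\psi$, the propositions $[\alpha_i]$ are invariant under $y$-variants. Preservation of validity then goes by induction on $n$:
\begin{itemize}
    \item Fix $w$, and follow the chain of selected worlds $w_1\in f([\alpha_1],w)$, $w_2\in f([\alpha_2],w_1)$, and so on. Uniqueness makes this chain unique whenever it exists.
    \item At the final world $w_n$, the premise holds for every $y$-variant. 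So $E(y)\then\phi[y/x]$ holds there for every value of $y$, which gives $\forall x\phi$ at $w_n$ with respect to $d(w_n)$.
\end{itemize}
The care needed is that the selected worlds do not depend on the value of $y$. I do not expect any other step to be difficult.

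Finally, for $\mathsf{QC2^c_{\ex/=}}$ I need the correspondence for axioms~\ref{ax:nondec} and~\ref{ax:noninc}, again expanded as $\neg E(x)\would\bot$. The soundness direction needs a check. In a weakly Stalnakerian frame, $f([\neg E(x)],w)\subseteq R(w)$, and by local constancy every world in $R(w)$ satisfies $E(x)$ when $w$ does. So the selected set meets $[\neg E(x)]$ nowhere, and Success makes it empty.

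The converse direction can fail in weakly Stalnakerian frames. If $R$ is larger than what $f$ ever selects, the axioms constrain only the worlds that $f$ can actually select, as in Remark~\ref{Remark:WeakStalnakerian}. I would check this case carefully. If it does fail, I expect the intended reading to be ``locally constant with respect to $R$'' where $R$ is taken to be the set of selectable worlds, and I would reformulate the statement in that way.
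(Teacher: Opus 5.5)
Your treatment of $\mathsf{QC2}_{(=)}$ is correct and matches the paper's proof. The paper writes out only soundness of axiom~\ref{ax:barcan-gc} and the correspondence for global constancy, Uniformity and Uniqueness, using the same kind of valuations you use. The paper leaves the other two clauses to the reader. On those clauses your proposal has one genuine error and one correct but unfinished diagnosis.

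\textbf{The error.} Definition~\ref{def:QC2-v} replaces only axiom~\ref{ax:uelim-c} and rule~\ref{rule:uintro-c}. So axiom~\ref{ax:barcan-gc}, $\forall x(\phi\would\psi)\then(\phi\would\forall x\psi)$, remains in $\mathsf{QC2^v_{\textit{E}/=}}$, and your own soundness argument for it used $d(w)=D$. It is not valid on variable-domain frames, even Stalnakerian ones. Let $W=\{w,v\}$, $R=W\times W$, and set $f(P,u)=\{u\}$ if $u\in P$ and $f(P,u)=P$ otherwise. Let $D=\{a,b\}$, $d(w)=\{a\}$, $d(v)=D$. Interpret $A$ as $D$ at $v$ and $\varnothing$ at $w$, and $F$ as $\{a\}$ at $v$. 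At $w$ we have $f([A(z)],w)=\{v\}\subseteq[F(a)]$, so $\forall x(A(z)\would F(x))$ holds. But $A(z)\would\forall xF(x)$ fails, since $b\in d(v)$ and $b\notin I(F,v)$. On any selection frame this axiom corresponds to $d(v)\subseteq d(w)$ for all $v\in f(P,w)$.

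So the second clause, read literally, is false, and your claim that soundness is unchanged except at $23^{\mathsf{v}}$ and $27^{\mathsf{v}}$ fails exactly here. The intended system presumably drops axiom~\ref{ax:barcan-gc} too. Otherwise Theorem~\ref{thm:QC2v-comp} and the claimed equivalence with $\mathsf{QST}$ would fail, since $\mathsf{QST}$ has no Barcan-type axiom. On that reading your argument goes through, including your chain argument for $27^{\mathsf{v}}$, where Uniqueness is in fact not needed.

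\textbf{The locally constant clause.} Your suspicion is correct, and the third clause is false as stated. Take the frame of Remark~\ref{Remark:WeakStalnakerian} with $D=\{a,b\}$, $d(1)=\{a\}$, $d(2)=\{b\}$. Since $f(P,w)\subseteq\{w\}$, $\neg\chi\would\bot$ is true at $w$ iff $\chi$ is, so $31^{\mathsf{c}}$ and $32^{\mathsf{c}}$ are trivially valid. Axiom~\ref{ax:barcan-gc} holds because only $w$ is ever selected from $w$. The remaining axioms and rules hold on every weakly Stalnakerian frame. Yet $1R2$ and $d(1)\neq d(2)$, so the frame is not locally constant.

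Your proposed repair is the right one: the axioms correspond to $d(v)=d(w)$ for all $v\in\bigcup_P f(P,w)$. For the nontrivial direction, suppose $v\in f(P,w)$ and $a\in d(w)\setminus d(v)$. Then $[\neg E(a)]\cap f(P,w)\neq\varnothing$, so WLA gives $f([\neg E(a)],w)\neq\varnothing$, and $31^{\mathsf{c}}$ fails at $w$; the case of $32^{\mathsf{c}}$ is symmetric. On Stalnakerian frames this condition is exactly local constancy, since LA and Success give $f(\{v\},w)=\{v\}$ for $v\in R(w)$. Neither defect affects the incompleteness theorems, which use only the direction ``valid implies weakly Stalnakerian''.
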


\begin{proof}
    We consider a few representative cases. 
    We'll only present the proposition for $\mathsf{QC2}$, leaving the corresponding results for the other logics to the reader. 
    

    The right-to-left direction is routine---it essentially amounts to checking the soundness of the axioms and rules of $\mathsf{QC2}$ over globally constant weakly Stalnakerian frames. 
    We'll just illustrate the soundness of axiom~\ref{ax:barcan-gc}: $\forall x(\phi \would \psi) \then (\phi \would \forall x\psi)$ when $x$ doesn't occur free in $\phi$. 
    Suppose $\mathcal{F}$ is weakly Stalnakerian and globally constant. 
    Let $\mathcal{M}$ be based on $\mathcal{F}$ where $\mathcal{M},w,g \Vdash \forall x(\phi \would \psi)$. 
    Thus, for all $a \in D$, $f([\phi]^{g^x_a},w) \subseteq [\psi]^{g^x_a}$. 
    Since $x$ doesn't occur free in $\phi$, $[\phi]^{g^x_a} = [\phi]^g$. 
    Hence, $f([\phi]^g,w) \subseteq \bigcap_{a \in D} [\psi]^{g^x_a} = [\forall x\psi]^g$, that is, $\mathcal{M},w,g \Vdash \phi \would \forall x\psi$. 

    For the left-to-right direction, we'll just show that if $\mathcal{F} \Vdash \mathsf{QC2}$, then $\mathcal{F}$ is globally constant and obeys Uniformity and Uniqueness (Definitions~\ref{Definition:Domains} and \ref{Definition:Stalnakerian Function}). 
    \begin{itemize}
        \item Globally constant: For any $w\in W$, $d(w)\subseteq{D}$, so suppose $a\in D$ and consider a model $\mathcal{M}$ over $\mathcal{F}$ such that $I(F,w) = d(w)$ and a variable assignment $g$ such that $g(y) = a$ (otherwise arbitrary). Then $\mathcal{M},w,g \Vdash \forall{x}F(x)$, so by the hypothesis that $\mathcal{F}$ validates axiom~\ref{ax:uelim-c}, we have that $\mathcal{M},w,g \Vdash F(y)$, that is, $g(y) = a \in I(F,w) = d(w)$. 

        \item Uniformity: For $P, Q \subseteq W$, suppose that $f(P,w) \subseteq Q$ and $f(Q,w) \subseteq P$. 
        Consider any model $\mathcal{M}$ over $\mathcal{F}$ such that $I(A,w') = D$ if $w' \in P$ (otherwise empty); $I(B,w') = D$ if $w' \in Q$ (otherwise empty); and, finally, $I(C,w') = D$ if $w'\in f(P,w)$ (otherwise empty). 
        Then, for any variable assignment $g$, $\mathcal{M},w,g \Vdash (A(x) \would B(x)) \wedge (B(x) \would A(x)) \wedge (A(x) \would C(x))$. 
        Since $\mathcal{F}$ validates axiom~\ref{ax:cso}, $\mathcal{M},w,g \Vdash B(x) \would C(x)$, that is, $f(Q,w) \subseteq f(P,w)$. 
        Symmetric reasoning establishes that $f(P,w) \subseteq f(Q,w)$, hence that $f(P,w) = f(Q,w)$, as desired.
        

        \item Uniqueness: If Uniqueness failed, then for some $P\subseteq{W}$ and $u, v, w \in W$, $u\neq{v}$ and $u, v \in f(P,w)$. Define a model $\mathcal{M}$ over $\mathcal{F}$ by putting $I(A,w')=D$ if $w'\in P$ and $I(B,w')=D$ if $w'=u$; otherwise, empty. Then for any variable assignment $g$, $v\in f([Ax]^{g},w)\not\subseteq\{u\}=[Bx]^{g}$ and $u\in f([Ax]^{g},w)\not\subseteq [\neg{Bx}]^{g} = W\setminus\{u\}$. Therefore, axiom~\ref{ax:cem} is not valid on $\mathcal{F}$, contradicting our assumption.
    \end{itemize}
\end{proof}

One might think that completeness follows immediately from Proposition~\ref{Proposition:FrameCorrespondence}. 
But this is not the case. 
While Proposition~\ref{Proposition:FrameCorrespondence} shows that $\mathsf{QC2}$ and its variants are valid on exactly weakly Stalnakerian frames, they are not the logics of any such classes of selection frames. 
This is because this class of frames validates a formula that is not provable in $\mathsf{QC2}$ or its variants. 
We turn to establishing this result in the next section. 

\section{Incompleteness for $\mathsf{QC2}$}
\label{Section:Proof of Incompleteness}

In this section we prove the simplest instance of our main theorem, the frame incompleteness of $\mathsf{QC2}$. 
This will serve as the basis for proving other incompleteness results, including the frame incompleteness of the Stalnaker-Thomason logic $\mathsf{QST}$. 

In proving this result, we make use of the following formula (cf.~\cite[p.~182]{Montagna1984provability}):
\begin{align*}
    \mathsf{DS} \coloneqq \exists x\top \wedge \forall x \pos F(x) \wedge \forall x \exists y \Big( \big( F(x) \vee F(y) \big) > \neg F(x)\Big).
\end{align*}
$\mathsf{DS}$ is so-called because it forces the existence of a ``descending sequence'' of worlds. 
In this section we will see that (i) $\neg \mathsf{DS}$ is valid in all weakly Stalnakerian frames yet (ii) $\mathsf{QC2}\nvdash \neg \mathsf{DS}$. From these it follows that $\mathsf{QC2}$ is not complete with respect to the class of weakly Stalnakerian frames. 
Proposition \ref{Proposition:FrameCorrespondence} then entails that $\mathsf{QC2}$ is not the logic of any class of selection frames. 

\begin{remark}
We write $\vec{x}$ for $x_1,\dots,x_n$ and $g(\vec{x})$ for $g(x_1),\dots,g(x_n)$. 
Where $\phi(\vec{x})$ only contains $\vec{x}$ free and $\vec{a} \in D$, we write $\mathcal{M},w \Vdash \phi(\vec{a})$ for $\mathcal{M},w,g^{\vec{x}}_{\vec{a}} \Vdash \phi(\vec{x})$. 
We likewise write $[\phi(\vec{a})]$ for $[\phi(\vec{x})]^{g^{\vec{x}}_{\vec{a}}}$. 
This notation is well-defined since (i) variables are rigid, i.e., $g$ is a function from variables to $D$, and (ii) when $g(\vec{x}) = g'(\vec{x})$, $\mathcal{M},w,g \Vdash \phi(\vec{x})$ iff $\mathcal{M},w,g' \Vdash \phi(\vec{x})$.  
\end{remark}

\begin{proposition}
\label{prop:DS}
    No weakly Stalnakerian frame satisfies $\mathsf{DS}$.
\end{proposition}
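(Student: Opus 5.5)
The plan is to argue by contradiction. Suppose $\mathcal{M}$ is a model on a weakly Stalnakerian frame with $\mathcal{M},w,g \Vdash \mathsf{DS}$. The first conjunct gives $d(w)\neq\varnothing$. Since the frame need not be constant-domain, every quantifier is read over $d(w)$, and I will only ever use objects from $d(w)$.

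\textbf{Step 1: single selected worlds.} Unpacking $\pos F(a)$ as $\neg(F(a)\would\bot)$ gives $f([F(a)],w)\neq\varnothing$ for each $a\in d(w)$. By Uniqueness and Success, $f([F(a)],w)=\{u_a\}$ for some $u_a\in[F(a)]$.

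\textbf{Step 2: a ``smaller'' witness.} Fix $a\in d(w)$. The third conjunct supplies $b\in d(w)$ with $f(P_{ab},w)\subseteq [\neg F(a)]$, where $P_{ab}=[F(a)]\cup[F(b)]$.
\begin{itemize}
\item First, $f(P_{ab},w)\neq\varnothing$. Otherwise WLA (condition 3$^-$, which follows from Uniformity and Uniqueness) applied with $Q=[F(a)]$ would give $P_{ab}\cap\{u_a\}=\varnothing$. That is impossible, since $u_a\in[F(a)]\subseteq P_{ab}$.
\item So $f(P_{ab},w)=\{v\}$ with $v\in P_{ab}$ and $v\notin[F(a)]$, hence $v\in[F(b)]$.
\item Now $f([F(b)],w)=\{u_b\}\subseteq P_{ab}$ and $f(P_{ab},w)=\{v\}\subseteq[F(b)]$. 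Uniformity then yields $v=u_b$.
\end{itemize}
Thus $f(P_{ab},w)=\{u_b\}$ and $u_b\notin[F(a)]$.

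\textbf{Step 3: a minimum for the whole family.} This is the step I expect to be the crux: it replaces the ``infinite descent'' intuition with a single application of fullness. Because $f$ is defined on \emph{all} subsets of $W$, I can take $T=\bigcup_{a\in d(w)}[F(a)]$, a proposition that need not be expressible.
\begin{itemize}
\item For any $a$, $f([F(a)],w)=\{u_a\}\subseteq T$, so by WLA as in Step 2, $f(T,w)\neq\varnothing$.
\item Hence $f(T,w)=\{t\}$ with $t\in[F(a)]$ for some $a\in d(w)$.
\item Applying Uniformity to $[F(a)]$ and $T$ (each selection lies in the other) gives $t=u_a$.
\end{itemize}

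\textbf{Step 4: the contradiction.} Take the $b$ given by Step 2 for this $a$. We have $f(T,w)=\{u_a\}\subseteq P_{ab}$ and $f(P_{ab},w)=\{u_b\}\subseteq T$. Uniformity therefore gives $u_b=u_a\in[F(a)]$, contradicting $u_b\notin[F(a)]$.

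Hence no model on a weakly Stalnakerian frame satisfies $\mathsf{DS}$ at any world under any assignment. The only subtlety to double-check is that Steps 2 and 3 use only WLA, Uniformity, Uniqueness and Success, not LA, so the argument covers the weakly Stalnakerian case.
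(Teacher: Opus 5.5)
Your proof is correct and is the paper's argument: select from the possibly inexpressible union $T=\bigcup_{a\in d(w)}[F(a)]$, use WLA to show $f(T,w)\neq\varnothing$, and apply Uniformity between $T$ and $[F(a)]\cup[F(b)]$ to contradict the third conjunct of $\mathsf{DS}$. The paper's version is shorter, because Success already gives $f([F(a)]\cup[F(b)],w)\subseteq T$; your Step~2 and the identification $t=u_a$ in Step~3 are therefore unnecessary, though harmless.
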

\begin{proof}
    Suppose $\mathcal{M},w \Vdash \mathsf{DS}$. 
    Let $Q_a = \{v \mid \mathcal{M},v \Vdash F(a)\}$ and let $P = \bigcup_{a \in d(w)} Q_a$. 
    By Uniqueness, $|f(P,w)| \leq 1$. 
    Suppose $f(P,w) = \varnothing$. 
    Then by WLA, $f(Q_a,w) \cap P = \varnothing$ for all $a \in D$. 
    But $f(Q_a,w) \subseteq Q_a \subseteq P$, so $f(Q_a,w) = \varnothing$, contrary to the fact that $\mathcal{M},w \Vdash \forall x \pos F(x)$ and $d(w) \neq \varnothing$ (since $\mathcal{M},w \Vdash \exists x \top$). 
    Suppose instead $f(P,w) = \{v\}$. 
    By definition of $P$, there is an $a \in d(w)$ such that $\mathcal{M},v \Vdash F(a)$. 
    Since $\mathcal{M},w \Vdash \exists y((F(a) \vee F(y)) \would \neg F(a))$, let $b \in d(w)$ be a witness. 
    So $\mathcal{M},w \Vdash (F(a) \vee F(b)) \would \neg F(a)$.
    By Success, $f(Q_a \cup Q_b,w) \subseteq Q_a \cup Q_b \subseteq P$. 
    But also $f(P,w) = \{v\} \subseteq Q_a \subseteq Q_a \cup Q_b$. 
    By Uniformity, $f(Q_a \cup Q_b,w) = f(P,w) = \{v\}$, contrary to the fact that $\mathcal{M},w \Vdash (F(a) \vee F(b)) \would \neg F(a)$. 
\end{proof}

\begin{remark}
    The proof of Proposition~\ref{prop:DS} is a simple extension of the argument in \cite{kocurek2025stalnaker} (cf.\ \cite[\S2.2]{dorr2024logic}) that, even in the propositional case, compactness fails over the class of Stalnakerian frames (e.g., $\{\pos A_i\}_{i \in \omega} \cup \{(A_i \vee A_{i+1}) \would \neg A_i\}_{i \in \omega}$ is finitely satifisfiable yet as a whole unsatisfiable). 
    Since the proof of Proposition~\ref{prop:DS} does not rely on LA (only WLA, which follows from Uniformity and Uniqueness), this shows that compactness fails over weakly Stalnakerian frames as well. 
    
    In fact, using a similar strategy to \cite{kocurek2025stalnaker}, the proof of Proposition~\ref{prop:DS} does not require Uniqueness and Uniformity: we only need Success, WLA, and Rational Monotonicity (cf.\ \cite[pp.~104--5; Prop II.59]{Veltman1985logics}): 
    \begin{align*}
        \text{if } P \subseteq Q \text{ and } f(Q,w) \cap P \neq \varnothing \text{, then } f(P,w) = f(Q,w) \cap P
    \end{align*}
    Taken together, the proof of Proposition~\ref{prop:DS} can be strengthened to show that no selection frame satisfying Success, WLA, and Rational Monotonicity satisfies $\mathsf{DS}$. 
    Furthermore, observe that the proof does not rely on any special constraints on local domains. 
\end{remark}

\begin{proposition}\label{counter-model}
$\mathsf{QC2} \nvdash \neg\mathsf{DS}$.
\end{proposition}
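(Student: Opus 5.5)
The approach is to exhibit a model of $\mathsf{QC2}$ in which $\mathsf{DS}$ is true at some world. By Proposition~\ref{prop:DS} this model cannot be based on a weakly Stalnakerian (full) selection frame, so I will use a semantics for which $\mathsf{QC2}$ is sound. The most convenient choice is the quasi-selection semantics: by Theorem~\ref{thm:QC2-comp}, $\mathsf{QC2}$ is sound for globally constant domain Stalnakerian quasi-selection models. So it suffices to build one such model $\mathcal{M}$, together with a world $w$, with $\mathcal{M},w\Vdash\mathsf{DS}$. (Soundness is the only direction of Theorem~\ref{thm:QC2-comp} actually needed.)

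\textbf{The intended picture.} The domain is a set of ``indices'' $a$, with one world $v_a$ for each index, plus the base world $w$. At $v_a$, the predicate $F$ holds of $a$ alone, and at $w$ it holds of nothing. This already gives $\forall x\pos F(x)$, and $\exists x\top$ is immediate. For $\mathsf{DS}$ we want, for every $a$, some $b$ such that the selected $(F(a)\vee F(b))$-world from $w$ is $v_b$. I would therefore equip the indices with a strict order $\prec$ having no minimal element, and let $f$ at $w$ choose the $\prec$-least $F$-world whenever the antecedent's extension contains only finitely many $v_a$'s. At the other worlds I would take $f$ to be centered in the trivial way, so that the Stalnakerian conditions hold there easily.

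\textbf{Handling antecedents with infinite extensions.} The freedom of quasi-selection functions matters here: $f$ is defined on formulas, not on sets of worlds. Hence $f(\phi,w,g)$ can be specified separately for ``infinitary'' antecedents such as $\exists yF(y)$, whose extension has no least element. For such $\phi$ I would add an extra ``limit'' world $v_\ast$ to which these antecedents may be sent. The index set and the interpretation at $v_\ast$ should be chosen so that every formula with parameters is either eventually constant along the descending chain, or its behavior is copied at $v_\ast$. A natural implementation takes the indices to be a nonstandard model of arithmetic under the reversed order, or $\omega$ with a nonprincipal ultrafilter. Then $v_\ast$ is the ultrafilter limit of the $v_a$, and $F$ holds at $v_\ast$ of a ``generic'' element. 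I would then define $f(\phi,w,g)$ as follows.
\begin{itemize}
\item If $[\phi]^g$ contains $w$, it is $\{w\}$.
\item If $[\phi]^g$ contains only finitely many $v_a$, it is the least such $v_a$.
\item If the set of $a$ with $v_a\in[\phi]^g$ belongs to the ultrafilter, it is $\{v_\ast\}$.
\item Otherwise, it is empty, provided $[\phi]^g\cap R(w)=\varnothing$.
\end{itemize}

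\textbf{Verification and main obstacle.} Success, Centering and Uniqueness hold by construction. LA requires that every nonempty extension fall under one of the cases listed above. The main obstacle is Uniformity, together with validity of the Barcan-type axiom~\ref{ax:barcan-gc}, since CSO and Barcan are exactly what break in the naive descending-chain model. For example, $\forall x(\exists yF(y)\would\neg F(x))$ holds there, but $\exists yF(y)\would\forall x\neg F(x)$ fails. Sending $\exists yF(y)$ to $v_\ast$ and making $v_\ast$ satisfy $\forall x\neg F(x)$ on the standard part is meant to repair this. The key lemma I would aim for is a definability or eventual-constancy lemma, proved by induction on formulas (using a Łoś-style argument in the ultrafilter version). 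It says that for each formula with parameters, the truth value at $v_\ast$ equals the ultrafilter-limit of its truth values at the $v_a$. Uniformity for $\phi,\psi$ with $f(\phi)\subseteq[\psi]$ and $f(\psi)\subseteq[\phi]$ then reduces to a comparison of finite sets versus ultrafilter-large sets, and Barcan follows from the lemma. If this lemma fails, I would instead try building the model via a Henkin-style canonical quasi-model of $\mathsf{QC2}\cup\{\mathsf{DS}\}$, showing consistency through finite approximations in which the chain is cut off and the last link is redirected.
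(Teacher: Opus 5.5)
There is a genuine gap, and no choice of model can close it, because the statement is false: $\neg\mathsf{DS}$ is a theorem of $\mathsf{QC2}$. Write $A$ for $\exists z F(z)$. Since $\vdash (F(x)\vee F(y))\then A$ and $\vdash F(x)\then(F(x)\vee F(y))$, axiom~\ref{ax:id} and rule~\ref{rule:cnec} give $\vdash (F(x)\vee F(y))\would A$ and $\vdash (A\would F(x))\then(A\would(F(x)\vee F(y)))$. So if $A\would F(x)$, CSO (axiom~\ref{ax:cso}) turns $(F(x)\vee F(y))\would\neg F(x)$ into $A\would\neg F(x)$. Splitting on the CEM instance $(A\would F(x))\vee(A\would\neg F(x))$ gives
\[
\vdash \big((F(x)\vee F(y))\would\neg F(x)\big)\then (A\would\neg F(x)).
\]
Existential elimination on $y$ (not free in $A\would\neg F(x)$), axiom~\ref{ax:uelim-c} and rule~\ref{rule:uintro-c} then yield $\vdash\forall x\exists y((F(x)\vee F(y))\would\neg F(x))\then\forall x(A\would\neg F(x))$. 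The conditional Barcan axiom~\ref{ax:barcan-gc} ($x$ is not free in $A$) upgrades this to $A\would\forall x\neg F(x)$, and with $A\would A$ and rule~\ref{rule:cnec} this gives $A\would\bot$. Finally, $F(x)\would A$, $A\would F(x)$ (from $A\would\bot$) and CSO give $F(x)\would\bot$, contradicting $\forall x\pos F(x)$. Hence $\mathsf{QC2}\vdash\neg\mathsf{DS}$. This is the syntactic shadow of Proposition~\ref{prop:DS}: its proof only applies Uniformity to the expressible antecedents $\exists xF(x)$, $F(a)$ and $F(a)\vee F(b)$. So the extra freedom of quasi-selection functions that you invoke buys nothing.

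You can see this concretely in your construction. Success for $f(\exists yF(y),w,g)=\{v_\ast\}$ forces $v_\ast\Vdash F(c)$ for some $c\in D$. For any $b$ you have $v_\ast\in[F(c)\vee F(b)]\subseteq[\exists yF(y)]$. Uniformity therefore forces $f(F(c)\vee F(b),w,g)=\{v_\ast\}$, contradicting your second clause. Since $v_\ast$ is an $F(c)$-world, the last conjunct of $\mathsf{DS}$ fails at $x:=c$ whatever $y$ is.

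Your ultrafilter-limit lemma is inconsistent for the same reason. Each $F(b)$ holds at at most one $v_a$, so the lemma makes every $F(b)$ false at $v_\ast$. Yet $\exists yF(y)$ holds at every $v_a$, so the lemma also makes it true at $v_\ast$, which is impossible over a constant domain. The Henkin fallback presupposes the consistency of $\mathsf{QC2}\cup\{\mathsf{DS}\}$, which is exactly what fails.

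Your diagnosis of the naive descending-chain model also exposes the paper's own argument. In the order model $\mathcal{K}$, at $-\infty$, $\forall x(\exists yF(y)\would\neg F(x))$ holds, since every $k<a$ satisfies $\neg F(a)$. But $\exists yF(y)\would\forall x\neg F(x)$ fails, since every $k\in\mathbb{Z}^-$ satisfies $F(k)$. So Claim~\ref{without-cem} is false for axiom~\ref{ax:barcan-gc}: the conditional Barcan schema is not valid on Lewisian ordering frames without the limit assumption. The paper's proof breaks at precisely the point you flagged.
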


To prove Proposition \ref{counter-model}, we use the order semantics (Section~\ref{Subsection:Ordering Semantics}). In particular, we exhibit a globally constant order model of $\mathsf{QC2} + \neg \mathsf{DS}$.

\begin{definition}\label{order-model}
    We define a globally constant order model $\mathcal{K}=\tup{W, R, \preceq, D, I}$ satisfying the following conditions (for a visualization, see Figure~\ref{modelK}):
\begin{enumerate}
    \item $W = \mathbb{Z}^- \cup \{-\infty\}$, i.e., the set of negative integers plus an additional element $-\infty$ (where, for each integer $k$, $-\infty <k$);
    \item $R(-\infty) = W$ and $R(k) = \{k\}$;
    \item $D = \mathbb{Z}^- = d(w)$ for all $w \in W$;
    \item $x \preceq_{-\infty} y$ iff $x \leq y$;
    \item $\preceq_{k}\coloneqq\{\tup{k,k}\}$;
    \item $I(F,k) = \{n \in \mathbb{Z}^- \mid n \leq k\}$ for $k \neq -\infty$;
    \item $I(F,-\infty) = \varnothing$;
    \item $I(G,x) = \varnothing$ for every other predicate $G$.
\end{enumerate}
\end{definition}




\begin{figure}[ht!]
\begin{center}
\begin{tikzpicture}[scale=0.9, transform shape, node distance={15mm}, thin, main/.style = {draw}]
  \node[main, draw=none] (2) {$\dots$};
  \node[main] (3) [right of=2] {$-3$};
  \node[main] (4) [right of=3] {$-2$};
  \node[main] (5) [right of=4] {$-1$};
  \node[main, draw=none] (6) [above of=5] {\checkmark};
  \node[main, draw=none] (7) [above of=6] {\checkmark};
  \node[main, draw=none] (8) [above of=7] {\checkmark};
  \node[main, draw=none] (9) [above of=4] {\xmark};
  \node[main, draw=none] (10) [above of=9] {\checkmark};
  \node[main, draw=none] (11) [above of=10] {\checkmark};
  \node[main, draw=none] (12) [above of=11] {\vdots};
  \node[main, draw=none] (13) [above of=8] {\vdots};
    \node[main, draw=none] (14) [above of=3] {\xmark};
  \node[main, draw=none] (15) [above of=14] {\xmark};
  \node[main, draw=none] (16) [above of=15] {\checkmark};
  \node[main, draw=none] (17) [above of=16] {\vdots};
  \node[main, draw=none] (18) [right of=13] {\vdots};
  \node[main] (infty) [left of=2] {$-\infty$};
  \node[main, draw=none] (inf1) [above of=infty] {\xmark};
  \node[main, draw=none] (inf2) [above of=inf1] {\xmark};
  \node[main, draw=none] (inf3) [above of=inf2] {\xmark};
  \node[main, draw=none] (inf4) [above of=inf3] {\vdots};
  \node[main, draw=none] (19) [right of=6] {$-1$};
  \node[main, draw=none] (19) [right of=7] {$-2$};
  \node[main, draw=none] (19) [right of=8] {$-3$};
  \draw[->] (2) -- (3);
  \draw[->] (3) -- (4);
  \draw[->] (4) -- (5);
  \draw[->] (infty) -- (2);
\end{tikzpicture}
\end{center}
\caption{The intension of the predicate $F$ in structure $\mathcal{K}$. The $x$-axis represents worlds and the $y$-axis represents elements of the domain.}
\label{modelK}
\end{figure}
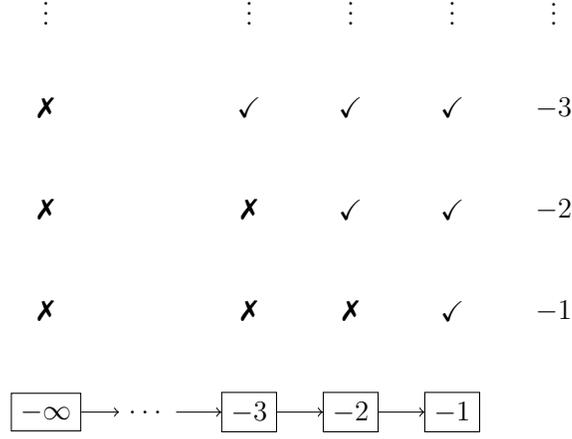

Observe that $\mathcal{K}$ is a Lewisian ordering model. 
However, it is not Stalnakerian, as the strong limit assumption (SLA) fails for this structure since $\preceq_{-\infty}$ is not a well-order: $\mathbb{Z}^{-}\cap{R(\neg\infty)}\neq\varnothing$ but for any $k\in \mathbb{Z}^{-}\cap{R(\neg\infty)}$, there is a $j\in \mathbb{Z}^{-}$ such that $j\prec_{-\infty}k$.
Despite this, we'll be able to show that $\mathsf{QC2}$ still holds at $-\infty$, since the definable sets of worlds in this model are rather constrained. 
Note, however, that we cannot establish this by simply defining $f(P,w) \coloneqq \min_{\preceq_w}(P)$ and then applying the selection function semantics. 
Indeed, by Propositions~\ref{Proposition:FrameCorrespondence} and \ref{prop:DS}, there is no selection frame that can simultaneously validate $\mathsf{QC2}$ while satisfying $\mathsf{DS}$.\footnote{Perhaps one can construct a selection model that satisfies both $\mathsf{DS}$ and $\mathsf{QC2}$ at a point without the underlying frame validating $\mathsf{QC2}$, but we have yet to find such a model.} 
In particular, if we take $f(P,w)$ to be the set of $\preceq_w$-minimal elements of $P$, we get a violation of Uniformity: $f(\mathbb{Z}^-,-\infty) = \varnothing \subseteq \{-1\}$ and $f(\{-1\},-\infty) = \{-1\} \subseteq \mathbb{Z}^-$, but $f(\mathbb{Z}^-,-\infty) \neq f(\{-1\},-\infty)$. 
So to show that $\mathsf{QC2}$ ``holds'' at $-\infty$, we will reason within the ordering semantics itself. 
This will suffice to establish that $\mathsf{QC2} \nvdash \neg\mathsf{DS}$: we only need a model of $\mathsf{QC2}$ in \emph{some} semantics that does not satisfy $\neg\mathsf{DS}$ to establish that $\neg\mathsf{DS}$ isn't a theorem of $\mathsf{QC2}$. 

Observe that if there is a $\preceq_w$-minimal $[\phi]^g$-world, then $\mathcal{K},w,g \Vdash \phi \would \psi$ iff $\min_{\preceq_w}([\phi]^g) \subseteq [\psi]^g$. 
In particular, if there is a $\preceq_w$-minimal $[\phi]^g$-world, then $\mathcal{K},w \Vdash \neg(\phi \would \bot)$.



\begin{claim}\label{descending sequences}
    $\mathcal{K},-\infty\Vdash \mathsf{DS}$.
\end{claim}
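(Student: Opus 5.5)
We verify the three conjuncts of $\mathsf{DS}$ at $-\infty$ directly from the ordering-semantics clause for $\would$, exploiting the remark just before the claim: whenever a $\preceq_w$-minimal $[\phi]^g$-world exists in $R(w)$, $\phi\would\psi$ holds iff all such minimal worlds satisfy $\psi$, and $\neg(\phi\would\bot)$ holds. First, $\exists x\top$ is immediate since $d(-\infty)=\mathbb{Z}^-\neq\varnothing$.

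For $\forall x\pos F(x)$, recall that $\pos F(a)$ abbreviates $\neg(F(a)\would\bot)$. Fix $a\in\mathbb{Z}^-$ and compute $[F(a)]=\{k\in\mathbb{Z}^-\mid a\le k\}$, using clauses 6 and 7 of Definition~\ref{order-model} (note $-\infty\notin[F(a)]$). This is a nonempty finite set $\{a,a+1,\dots,-1\}\subseteq R(-\infty)$. Its $\preceq_{-\infty}$-least element is $a$, since $\preceq_{-\infty}$ is $\le$. So a minimal world exists and, by the remark, $\mathcal{K},-\infty\Vdash\neg(F(a)\would\bot)$. One can also check this directly: $[F(a)]\cap R(-\infty)\neq\varnothing$, and every witness $x$ makes the existential clause false for $\bot$.

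For the third conjunct, fix $a\in\mathbb{Z}^-$ and take the witness $b=a-1\in\mathbb{Z}^-$. Then $[F(a)\vee F(b)]=[F(a)]\cup[F(a-1)]=\{a-1,a,\dots,-1\}$. This set is finite and nonempty, with $\preceq_{-\infty}$-minimum $a-1$. We need the unique minimal world, $a-1$, to satisfy $\neg F(a)$, i.e.\ $a\notin I(F,a-1)=\{n\le a-1\}$, which holds since $a>a-1$. Hence $(F(a)\vee F(a-1))\would\neg F(a)$ holds at $-\infty$. To be careful, I will spell out the ordering clause itself: choose $x=a-1$, and every $y\in[F(a)\vee F(b)]$ with $y\le a-1$ is $a-1$ itself, which lies in $[\neg F(a)]$.

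The only potential obstacle is making sure the truth sets are computed correctly, in particular that $-\infty$ never satisfies $F$, so that these sets are the finite intervals above. Once that is settled, the minimality argument goes through even though SLA fails globally for $\preceq_{-\infty}$.
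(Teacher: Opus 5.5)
Your proof is correct and follows the paper's argument essentially verbatim. You use the nonempty local domain for $\exists x\top$, the least $[F(a)]$-world $a$ for $\pos F(a)$, and the witness $a-1$, which is the $\preceq_{-\infty}$-minimum of $[F(a)\vee F(a-1)]$ and falsifies $F(a)$; your explicit unpacking of the ordering clause (including that $-\infty$ lies in none of these truth sets) just makes precise what the paper leaves implicit.
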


\begin{proof}
    For the first conjunct, this follows from the fact that the local domain of each $w \in W$ is non-empty. 
    
    For the second conjunct, observe that $\mathcal{K},k \Vdash F(k)$ (or, put differently, $\mathcal{K},g(x),g \Vdash F(x)$). 
    Moreover, $k$ is the closest $[F(k)]$-world to $-\infty$, which means $\mathcal{K},-\infty \Vdash \neg(F(k) \would \bot)$, i.e., $\mathcal{K},-\infty \Vdash \pos F(k)$.
    Thus, $\mathcal{K},-\infty \Vdash \forall x \pos F(x)$.

    For the third conjunct, let $k \in \mathbb{Z}^-$. 
    For the witness $y$, we choose $k-1$. 
    Observe the $\preceq_{-\infty}$-minimal element of $[F(k) \vee F(k-1)]$ is $k-1$. 
    Yet $\mathcal{K},k-1 \nVdash F(k)$. 
    Hence, $\mathcal{K},-\infty \Vdash ((F(k) \vee F(k-1)) \would \neg F(k)$.
\end{proof}

The next thing to check is that all of the theorems of $\mathsf{QC2}$ hold at $-\infty$. 
In fact, it is easier to prove the stronger claim that $\mathsf{QC2}$ is valid in $\mathcal{K}$, that is, $\mathcal{K},w,g \Vdash \mathsf{QC2}$ for \emph{all} worlds $w$ and variable assignments $g$. 
By far the hardest part of the proof is verifying that axiom~\ref{ax:cem}, $(\phi \would \psi) \vee (\phi \would \neg\psi)$, which we'll label $\mathsf{CEM}$, holds at $-\infty$. 
We isolate the following easy claims first.


\begin{claim}
\label{claim:reduceZ-}
    Where $\phi$ is a formula, let $\phi^-$ be the result of replacing every occurrence of $\would$ by $\then$ in $\phi$. 
    Then $[\phi]^g \cap \mathbb{Z}^- = [\phi^-]^g \cap \mathbb{Z}^-$. 
\end{claim}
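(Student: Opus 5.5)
The proof goes by induction on the complexity of $\phi$, simultaneously for all variable assignments $g$. The key semantic fact is that in $\mathcal{K}$, for every $k \in \mathbb{Z}^-$ we have $R(k) = \{k\}$ and $\preceq_k = \{\tup{k,k}\}$. So at integer worlds the conditional should collapse to material implication.

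First I will check that for $k\in\mathbb{Z}^-$ and any formulas $\phi,\psi$,
\[
\mathcal{K},k,g\Vdash \phi\would\psi \iff (k\notin[\phi]^g \text{ or } k\in[\psi]^g).
\]
Suppose $k\notin[\phi]^g$. Then $[\phi]^g\cap R(k)=\varnothing$, so the first disjunct of the truth clause for $\would$ holds. Suppose instead $k\in[\phi]^g$. Then the only candidate $x\in[\phi]^g\cap R(k)$ is $x=k$. The condition $\forall y\in[\phi]^g\,(y\preceq_k k\Rightarrow y\in[\psi]^g)$ constrains only $y=k$, since $\preceq_k$ relates only $k$ to itself. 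So the conditional holds iff $k\in[\psi]^g$. This evaluation depends only on whether $k$ belongs to $[\phi]^g$ and to $[\psi]^g$.

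The induction then runs as follows.

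Atomic case: $\phi^-=\phi$, so the claim is trivial.

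Negation and $\then$ cases: truth at $k$ is computed pointwise from truth at $k$ of the components. The hypotheses $[\phi]^g\cap\mathbb{Z}^-=[\phi^-]^g\cap\mathbb{Z}^-$ (and likewise for $\psi$) transfer directly.

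$\forall$ case: the domain is globally constant ($d(k)=\mathbb{Z}^-$), and the induction hypothesis holds for every assignment $g^x_a$. Hence $k\in[\forall x\phi]^g$ iff $k\in[\phi]^{g^x_a}$ for all $a$, iff $k\in[\phi^-]^{g^x_a}$ for all $a$, iff $k\in[\forall x\phi^-]^g$.

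$\would$ case: by the displayed fact, $k\in[\phi\would\psi]^g$ iff $k\notin[\phi]^g$ or $k\in[\psi]^g$. By the induction hypothesis this holds iff $k\notin[\phi^-]^g$ or $k\in[\psi^-]^g$, i.e., iff $k\in[\phi^-\then\psi^-]^g$. Note that $(\phi\would\psi)^-=\phi^-\then\psi^-$.

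The only point needing care is the $\would$ case. There, worlds outside $\mathbb{Z}^-$ (namely $-\infty$) might seem relevant, since $[\phi]^g$ may contain $-\infty$. They are irrelevant because $R(k)=\{k\}$ and the truth clause only inspects $[\phi]^g\cap R(k)$ and elements $\preceq_k$-below the witness. That is why the induction hypothesis, which speaks only about $\mathbb{Z}^-$, suffices.
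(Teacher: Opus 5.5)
Your proposal is correct and follows the paper's proof: induction on the complexity of $\phi$, where the only substantive case is $\would$, which collapses to material implication at integer worlds because $R(k)=\{k\}$. You also make explicit two points the paper leaves implicit: that $\preceq_k=\{\tup{k,k}\}$ is what settles the $\would$ case, and that the induction must range over all assignments $g$ for the $\forall$ step to go through.
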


\begin{proof}
    By induction on the complexity of $\phi$. 
    For the $\would$-case, since $R(k) = \{k\}$, it is easy to check that $\mathcal{K},k,g \Vdash \phi \would \psi$ iff $\mathcal{K},k,g \Vdash \phi \then \psi$. 
\end{proof}



\begin{claim}\label{without-cem}
    All of the axioms of $\mathsf{QC2}$ besides $\mathsf{CEM}$ are valid in $\mathcal{K}$.
    Moreover, the rules of $\mathsf{QC2}$ preserve validity in $\mathcal{K}$. 
\end{claim}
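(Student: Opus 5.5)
I would verify each axiom and rule other than $\mathsf{CEM}$ directly in the ordering semantics for $\mathcal{K}$, splitting throughout into the case of a world $k \in \mathbb{Z}^-$ and the case of $-\infty$. At any $k \in \mathbb{Z}^-$ the situation is trivial: since $R(k)=\{k\}$ and $\preceq_k=\{\tup{k,k}\}$, the clause for $\phi\would\psi$ at $k$ reduces to $\phi\then\psi$ (as in the proof of Claim~\ref{claim:reduceZ-}). Hence axioms~\ref{ax:id}--\ref{ax:barcan-gc} become classical tautologies or first-order validities at $k$. The quantifier axiom~\ref{ax:uelim-c} and the tautology axiom~\ref{ax:PL} hold at every world because the model has globally constant domain $D=\mathbb{Z}^-$ and the extensional clauses are standard.

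At $-\infty$ I would rely on the general fact, noted after Definition~\ref{Definition:Stalnakerian Order}, that $\mathcal{K}$ is a Lewisian ordering model, i.e.\ $\preceq_{-\infty}$ is a reflexive, transitive, strongly connected order on $R(-\infty)=W$ with $-\infty$ strictly least. The standard soundness arguments of \cite{Lewis1971} for $\mathsf{VC}$ then cover $\phi\would\phi$ (trivial), Centering (axiom~\ref{ax:cen}, using that $-\infty$ is the unique minimum, so if $-\infty\in[\phi]^g$ the witness clause forces $-\infty\in[\psi]^g$), and CSO (axiom~\ref{ax:cso}). For CSO I would spell out the argument: given witnesses $x\in[\phi]$, $y\in[\psi]$ for $\phi\would\psi$ and $\psi\would\phi$, take the $\preceq$-smaller of the relevant witnesses and check that every $[\psi]$-world below it lies in $[\phi]$ and hence in $[\chi]$. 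The vacuous case $[\phi]^g\cap R=\varnothing$ is handled separately. For the Barcan-type axiom~\ref{ax:barcan-gc} I would argue as follows. If $[\phi]^g$ is nonempty, then for each $a$ there is a witness $x_a$ such that all $[\phi]$-worlds $\preceq x_a$ satisfy $\psi[a]$. Because $\preceq_{-\infty}$ is a linear order, the witness sets are downward-closed, so any single witness for one $a$ need not work for all $a$. This is the one delicate point.

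The step I expect to be the main obstacle is therefore axiom~\ref{ax:barcan-gc} at $-\infty$, since Lewisian semantics does not in general validate it when $D$ is infinite. To overcome this I would exploit that $-\infty$ is the $\preceq$-least world. If $-\infty\in[\phi]^g$, choose $x=-\infty$ as the witness, and the condition reduces to $\psi[a]$ holding at $-\infty$, which is uniform in $a$. Otherwise $[\phi]^g\subseteq\mathbb{Z}^-$, and by Claim~\ref{claim:reduceZ-} $[\phi]^g$ is the extension of a purely extensional formula over the finite-below structure. I would then show that any such definable set either has a $\preceq$-least element or is cofinal downward. In the first case the least element is a common witness. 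In the second case I would use a definability analysis of the extensional reduct: the truth of $\psi[a]$ at world $k$ depends only on the order type of the parameters relative to $k$, and hence is eventually constant as $k\to-\infty$. Because there are finitely many order types over the free parameters, a common witness exists.

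The rules are easier. Modus ponens and the rule~\ref{rule:uintro-c} preserve validity in $\mathcal{K}$ because the domain is constant and validity quantifies over all assignments. Rule~\ref{rule:cnec} preserves validity because if $\psi_1\wedge\dots\wedge\psi_n\then\chi$ holds at every world, then at $-\infty$ we take the $\preceq$-smallest of the $n$ witnesses (a linear order), and all $[\phi]$-worlds below it satisfy every $\psi_i$ and hence $\chi$. At the worlds $k$ the rule reduces to the classical case.
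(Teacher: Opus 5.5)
\textbf{The argument for axiom~\ref{ax:barcan-gc} at $-\infty$ fails, and cannot be repaired, because the axiom is not valid in $\mathcal{K}$.} The rest of your proposal is fine: the reduction at the worlds $k$, Centering, CSO, axiom~\ref{ax:uelim-c}, and the three rules. You were also right that axiom~\ref{ax:barcan-gc} is where a bare appeal to Lewisian soundness breaks down. That appeal is all the paper's proof offers, and the conditional Barcan schema is an infinite-agglomeration principle that needs the limit assumption. Your repair goes wrong in the cofinal-downward case. The step ``there are finitely many order types over the free parameters, so a common witness exists'' overlooks that one parameter of $\psi$ is the value $a$ of the quantified variable $x$, which ranges over all of $\mathbb{Z}^-$. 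For each fixed $a$, $\psi[a]$ is eventually constant as $k\to-\infty$, but the point where it stabilizes depends on $a$ and is unbounded below.

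Concretely, let $\phi:=\exists yF(y)$ and $\psi:=\neg F(x)$. Then $[\phi]^g=\mathbb{Z}^-$ and $[\neg F(x)]^{g^x_a}=\{-\infty\}\cup\{k\in\mathbb{Z}^-\mid k<a\}$. So for each $a\in\mathbb{Z}^-$ the world $a-1$ witnesses $\phi\would\neg F(x)$ at $-\infty$ under $g^x_a$, and $\mathcal{K},-\infty,g\Vdash\forall x(\phi\would\neg F(x))$. But $k\in I(F,k)$ for every $k\in\mathbb{Z}^-$, so $[\forall x\neg F(x)]^g=\{-\infty\}$, which contains no $\phi$-world. Hence $\mathcal{K},-\infty,g\nVdash\phi\would\forall x\neg F(x)$, and the claim as stated is false.

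This is not a quirk of $\mathcal{K}$ that a finer definability analysis could avoid. Write $P:=\exists zF(z)$.
\begin{enumerate}
\item CEM and CSO give $\vdash((F(x)\vee F(y))\would\neg F(x))\then(P\would\neg F(x))$ in $\mathsf{QC2}$. If $P\would F(x)$, then $P\would(F(x)\vee F(y))$, and CSO transfers $\neg F(x)$ from $F(x)\vee F(y)$ to $P$.
\item Generalizing on $x$ and applying axiom~\ref{ax:barcan-gc} yields $P\would\forall x\neg F(x)$, hence $P\would\bot$.
\item CSO then gives $F(x)\would\bot$, contradicting $\pos F(x)$.
\end{enumerate}
So $\mathsf{QC2}\vdash\neg\mathsf{DS}$, and no model satisfying $\mathsf{DS}$ at a world can validate all of $\mathsf{QC2}$. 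Your suspicion about axiom~\ref{ax:barcan-gc} was well placed. What it exposes, though, is that Proposition~\ref{counter-model} itself fails, not merely that this claim needs a better proof.
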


\begin{proof}
By Claim~\ref{claim:reduceZ-}, for any $k \in \mathbb{Z}^-$, $\mathcal{K},k,g\Vdash\alpha$ for any axiom $\alpha$ of $\mathsf{QC2}$ (including $\mathsf{CEM}$). 
Moreover, all of the axioms besides $\mathsf{CEM}$ are valid on the ordering semantics over Lewisian frames with globally constant domains (cf.~\cite{Lewis1971}). 
Hence, every axiom $\alpha$ besides $\mathsf{CEM}$ is valid in $\mathcal{K}$, and the rules preserve validity in $\mathcal{K}$. 

For illustration, we briefly verify that rule~\ref{rule:cnec} preserves validity. 
Suppose that $\mathcal{K} \Vdash (\psi_1 \wedge \cdots \wedge \psi_n) \then \chi$. 
Suppose $[\phi]^g \cap R(w) \neq \varnothing$ and for each $1\leq i\leq{n}$, there is a $k_i \in [\phi]^g \cap R(w)$ such that $\mathcal{K},j,g \Vdash \phi \then \psi_i$ for all $j \preceq_{w} k_i$. 
Then where $k = \min_{\preceq_w}(k_1,\dots,k_n)$, any $j \preceq_{w} k$ is such that $\mathcal{K},j,g \Vdash (\phi \then \psi_1) \wedge \cdots \wedge (\phi \then \psi_n) \wedge ((\psi_1 \wedge \cdots \wedge \psi_n) \then \chi$. 
Hence, for any such $j$, $\mathcal{K},j,g \Vdash \phi \then \chi$. 
So $\mathcal{K},w,g \Vdash \phi \would \chi$.
\end{proof}

This leaves checking that $\mathsf{CEM}$ is valid in $\mathcal{K}$. 
The reasoning above does not apply since even though $\preceq_w$ is linear, $\mathsf{CEM}$ is not guaranteed to hold without the well-foundedness of $\preceq_w$, that is, SLA. 
(For example, if there were an infinitely descending alternating sequence of $(\phi \wedge \psi)$-worlds and $(\phi \wedge \neg\psi)$-worlds, with no minimal $\phi$-world, $(\phi \would \psi) \vee (\phi \would \neg\psi)$ would not be satisfied.) 
Thus, a more sophisticated argument is required to show that the model nevertheless satisfies every instance of $\mathsf{CEM}$ expressible in the language. 

For $k \in \mathbb{Z}^-$, by Claim~\ref{claim:reduceZ-}, this amounts to checking that $\mathcal{K},k \Vdash (\phi \then \psi) \vee (\phi \then \neg\psi)$, which is straightforward. 
The more difficult case, of course, is showing $\mathcal{K},-\infty \Vdash \mathsf{CEM}$. 
In effect, the reason $\mathsf{CEM}$ holds even at $-\infty$ is that the sets expressible by a formula are very constrained. 

\begin{remark}\label{surjective}
    In what follows, we \emph{fix} formulas $\phi$ and $\psi$, with the aim of showing that $\mathcal{K},-\infty \Vdash (\phi \would \psi) \vee (\phi \would \neg\psi)$. 
    This means we need to show, for an \emph{arbitrary} variable assignment $g$, $\mathcal{K},-\infty, g \Vdash (\phi \would \psi) \vee (\phi \would \neg\psi)$. 
    Note, however, that if $g$ and $g'$ agree on all variables appearing in $\phi$ and $\psi$, then $[\phi \would \psi]^g = [\phi \would \psi]^{g'}$. 
    In particular, if $g^\star$ is a surjective variable assignment such that $g^\star(x) = g(x)$ for all $x$ appearing in $\phi$ and $\psi$, then $\mathcal{K},-\infty,g \Vdash (\phi \would \psi) \vee (\phi \would \neg\psi)$ iff $\mathcal{K},-\infty,g^\star \Vdash (\phi \would \psi) \vee (\phi \would \neg\psi)$. 
    Such a $g^\star$ is always guaranteed to exist since only finitely many variables appear in $\phi$ and $\psi$. 
    Hence, we restrict our attention---without loss of generality---to surjective variable assignments. 
    This allows us to write ``$\phi(k)$'' for $\phi(x)$ where $g^\star(x) = k$. 
\end{remark}

We now fix formulas $\phi$ and $\psi$ with the goal of showing that $\mathcal{K},-\infty \Vdash (\phi \would \psi) \vee (\phi \would \neg\psi)$. 
Note that this is trivially satisfied if $\mathcal{K},-\infty,g \Vdash \phi$, since $\mathcal{K}$ obeys Weak Centering. 
So we'll focus throughout on the case in which $\mathcal{K},-\infty,g \nVdash \phi$. 


\begin{claim}\label{would-free}
    Suppose that $\mathcal{K},-\infty,g \nVdash \phi$. Then there is some $>$-free formula $\phi^\star$ such that $[\phi]^g=[\phi^\star]^g$.
\end{claim}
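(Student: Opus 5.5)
The plan is to take $\phi^\star$ to be $\phi^-$ (from Claim~\ref{claim:reduceZ-}) conjoined with a closed $\would$-free sentence that is false exactly at $-\infty$. Since $W = \mathbb{Z}^- \cup \{-\infty\}$, it suffices to get agreement with $[\phi]^g$ on $\mathbb{Z}^-$ and at $-\infty$ separately.

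\emph{Step 1: the points of $\mathbb{Z}^-$.} By Claim~\ref{claim:reduceZ-}, $[\phi]^g \cap \mathbb{Z}^- = [\phi^-]^g \cap \mathbb{Z}^-$, and $\phi^-$ is $\would$-free. The one problem is that $\phi^-$ may be true at $-\infty$ even though $\phi$ is not, because replacing $\would$ by $\then$ changes truth values at $-\infty$, where $R(-\infty)=W$ is not a singleton.

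\emph{Step 2: the point $-\infty$.} To repair this, use the sentence $\sigma \coloneqq \exists z F(z)$, with $z$ any variable. It is $\would$-free and closed, so $[\sigma]^g$ does not depend on $g$.
\begin{itemize}
    \item At each $k \in \mathbb{Z}^-$ we have $k \in I(F,k)$ and $k \in D = d(k)$, so $\mathcal{K},k \Vdash \sigma$.
    \item At $-\infty$ we have $I(F,-\infty)=\varnothing$, so $\mathcal{K},-\infty \nVdash \sigma$.
\end{itemize}
Hence $[\sigma]^g = \mathbb{Z}^-$.

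\emph{Step 3: conclusion.} Put $\phi^\star \coloneqq \phi^- \wedge \exists z F(z)$, which is $\would$-free. Then
\[
[\phi^\star]^g = [\phi^-]^g \cap \mathbb{Z}^- = [\phi]^g \cap \mathbb{Z}^- .
\]
By the hypothesis $\mathcal{K},-\infty,g \nVdash \phi$, the right-hand side is just $[\phi]^g$. So $[\phi]^g = [\phi^\star]^g$.

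I expect no real obstacle. The only points needing care are that Claim~\ref{claim:reduceZ-} is applied with the same assignment $g$ throughout, and that the conjunct is closed, so that no variable capture or dependence on $g$ arises.
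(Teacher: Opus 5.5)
Your proposal is correct and is essentially the paper's proof: both take $\phi^\star \coloneqq \phi^- \wedge \exists x F(x)$, use $[\exists x F(x)]^g = \mathbb{Z}^-$ together with Claim~\ref{claim:reduceZ-} to get $[\phi^\star]^g = [\phi]^g \cap \mathbb{Z}^-$, and then use the hypothesis that $\phi$ fails at $-\infty$ to drop the intersection.
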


\begin{proof}
    Let's assemble three facts. 
    First, we are assuming that $\phi$ is false at $-\infty$, i.e., 
    \begin{align*}
        \mystar \quad [\phi]^g=[\phi]^g\cap\mathbb{Z}^-.
    \end{align*}
    Second, 
    by Claim~\ref{claim:reduceZ-}, there is a $\would$-free formula $\phi^-$ such that:
    \begin{align*}
        \mystar\mystar \quad [\phi]^g \cap \mathbb{Z}^- = [\phi^-]^g \cap \mathbb{Z}^-.
    \end{align*}
    Third, observe that:
    \begin{align*}
        \mystar\mystar\mystar \quad [\exists x F(x)]^g = \mathbb{Z}^-.
    \end{align*}
    Now let $\phi^\star \coloneqq \phi^- \wedge \exists x F(x)$. We reason as follows:
    \begin{align*}
        [\phi^\star]^g & = [\phi^- \wedge \exists x F(x)]^g \text{ by definition of $\phi^\star$}\\
        & =  [\phi^-]^g \cap [\exists x F(x)]^g \text{ by the semantics for $\wedge$}\\
        & =  [\phi^-]^g \cap \mathbb{Z}^- \text{ by $\mystar\mystar\mystar$}\\
        & =  [\phi]^g \cap \mathbb{Z}^- \text{ by $\mystar\mystar$}\\
        & =  [\phi]^g \text{ by $\mystar$}
    \end{align*}
    Since $\phi^\star$ is $\would$-free, this completes the proof of the claim.
\end{proof}

Throughout, we'll say $\phi$ is \emph{$\mathcal{K}$-equivalent} to $\phi'$ (relative to $g$) to mean $[\phi]^g = [\phi']^g$. 

\begin{claim}\label{into-dnf}
    Suppose that $\mathcal{K},-\infty,g \nVdash \phi$. Then $\phi$ is $\mathcal{K}$-equivalent to a disjunction in which each disjunct has of one of the following forms ($n,m \geq 0$):
    \begin{align*}
        & F(x_1) \wedge \cdots \wedge F(x_n) \wedge \neg F(y_1) \wedge \cdots \wedge \neg F(y_m) \\
        & F(x_1) \wedge \cdots \wedge F(x_n) \wedge \exists x \neg F(x) \\
        & \neg F(x_1) \wedge \cdots \wedge \neg F(x_n) \wedge \exists x F(x) \\
        & \exists x F(x) \wedge \exists x \neg F(x) \\
        & \forall x F(x)
    \end{align*}
\end{claim}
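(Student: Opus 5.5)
The plan is to reduce to a quantifier-free normal form for monadic first-order logic without identity, and then use the specific shape of $\mathcal{K}$ to discard or simplify disjuncts. First, by Claim~\ref{would-free}, since $\mathcal{K},-\infty,g\nVdash\phi$, we may replace $\phi$ by the $\would$-free formula $\phi^\star$ with $[\phi]^g=[\phi^\star]^g$. Recall the construction: $\phi^\star=\phi^-\wedge\exists xF(x)$, so $[\phi^\star]^g\subseteq\mathbb{Z}^-$. Next, every predicate other than $F$ has empty extension at every world (Definition~\ref{order-model}). So each atom $G(\vec{x})$ with $G\neq F$ can be replaced by $\bot$ without changing $[\cdot]^g$. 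We are then left with a purely first-order formula built from atoms $F(x)$ with the connectives $\neg$, $\then$ and $\forall$. It is evaluated world by world in the structure $\tup{\mathbb{Z}^-, I(F,w)}$; because domains are globally constant, quantifiers range over the same domain everywhere.

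The core step is quantifier elimination for monadic logic without identity. I would prove by induction on formulas that every such formula is logically equivalent, over all structures, to a Boolean combination of atoms $F(x_i)$ (for free variables $x_i$) and the two sentences $\exists xF(x)$ and $\exists x\neg F(x)$. The atomic and Boolean cases are trivial. For the quantifier case, write $\exists x\theta$ with $\theta$ already in this form and put $\theta$ in disjunctive normal form. Then distribute $\exists x$ over the disjunction, and pull out of each conjunct every literal not containing $x$. What remains under $\exists x$ is a conjunction of literals in $F(x)$: either empty (giving $\exists x\top$, which is equivalent to $\top$ here since $D\neq\varnothing$), $F(x)$, $\neg F(x)$, or contradictory. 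The absence of identity is exactly what makes this work, since there is no way to count witnesses. This is the step I expect to require the most care, mainly in the bookkeeping of variable capture and substitution; conceptually it is standard.

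Finally, put the resulting Boolean combination into disjunctive normal form and conjoin $\exists xF(x)$ to every disjunct. This is harmless because $[\phi]^g\subseteq\mathbb{Z}^-=[\exists xF(x)]^g$. Then simplify using facts about $\mathcal{K}$, working through the disjuncts as follows.
\begin{itemize}
\item Delete any disjunct containing both $F(x)$ and $\neg F(x)$, or containing both $\exists xF(x)$ and $\neg\exists xF(x)$.
\item Rewrite $\neg\exists x\neg F(x)$ as $\forall xF(x)$. A disjunct containing $\forall xF(x)$ either contains a literal $\neg F(y)$ and is empty, or collapses to $\forall xF(x)$ (fifth form), since positive literals and $\exists xF(x)$ are then redundant.
\item In the remaining disjuncts, $\exists x\neg F(x)$ is either absent or present. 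If it is present, then $\exists xF(x)$ is absorbed by any positive literal and $\exists x\neg F(x)$ by any negative literal, giving the first, second, or fourth form.
\item If $\exists x\neg F(x)$ is absent, the disjunct is a conjunction of literals together with $\exists xF(x)$. When $n\geq 1$, $\exists xF(x)$ is absorbed and we get the first form; when $n=0$, we get the third form.
\end{itemize}
This yields exactly the displayed list.
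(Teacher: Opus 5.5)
Your proposal is correct and follows essentially the paper's route. You eliminate $\would$ via Claim~\ref{would-free}, replace the empty predicates by $\bot$, and reduce the resulting monadic formula to a Boolean combination of $F$-atoms and the sentences $\exists xF(x)$, $\exists x\neg F(x)$; the paper takes this reduction as known, while you prove it by the standard induction. You then put the formula in DNF and prune case by case, where your conjoining of $\exists xF(x)$ does the job of the paper's deletion of $\forall x\neg F(x)$-disjuncts (both rest on $\phi$ failing at $-\infty$).

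Two trivial bookkeeping slips should be fixed. First, the subcase with $\exists x\neg F(x)$ present, $n=0$ and $m\geq 1$ yields the third form, not the first, second, or fourth. Second, a disjunct containing both $\forall xF(x)$ and $\exists x\neg F(x)$ should be deleted as contradictory rather than collapsed to $\forall xF(x)$.
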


\begin{proof}
    By Claim~\ref{would-free}, we may assume that $\phi$ is $\would$-free. 
    $F$ is the only non-empty predicate in the language, so we may assume that any other atomic formula in $\phi$ has been replaced with $\bot$. 
    
    This is all to say that we may assume $\phi$ is a simple monadic formula with only the predicate $F$ appearing. 
    Such formulas can be written as Boolean combinations of formulas of the form $F(x)$, $\forall x F(x)$, and $\exists x F(x)$. 
    We can then rewrite this Boolean combination in DNF and eliminate redundant conjuncts. 
    That is, we may assume $\phi$ is a disjunction where each disjunct is a conjunction of formulas of the following forms:
    \begin{align*}
        & F(x_1) \wedge \cdots \wedge F(x_n)\\
        & \neg F(x_1) \wedge \cdots \wedge \neg F(x_n)\\
        & \exists x F(x) \\
        & \exists x \neg F(x) \\
        & \forall x F(x) \\
        & \forall x \neg F(x)
    \end{align*}
    Some such conjunctions are logically redundant, e.g., $F(x_1)\wedge \exists x F(x)$. 
    Others are logically contradictory, e.g., $F(x_1)\wedge \forall x \neg F(x)$. 
    Note also that $\forall x \neg F(x)$ can always be eliminated as a disjunct since $\phi$ is false at $-\infty$. 
    In a case-by-case fashion, one can remove these redundancies, so that $\phi$ is reduced to a disjunction in the exact form prescribed by the statement of the claim.
\end{proof}

Recall that we are assuming that our variable assignment $g$ is surjective (Remark \ref{surjective}). 
So for each $j\in\mathbb{Z}^-$, there is a formula $F(x)$ where $g(x) = j$, which we write as ``$F(j)$''. For the remaining claims, we will repeatedly apply the following observation.

\begin{claim}\label{observation}
    If $m\geq n$ then $\mathcal{K} \Vdash F(m)\then F(n)$.
\end{claim}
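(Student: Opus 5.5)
The plan is to unwind the definition of validity in $\mathcal{K}$: we must show that for every world $w\in W$ and every variable assignment $g$ with $g(x)=m$ and $g(y)=n$ (the notation $F(m)$ means $F(x)$ under such an assignment), we have $\mathcal{K},w,g\Vdash F(x)\then F(y)$ whenever $m\geq n$. Since the formula is $\would$-free, only the atomic clause and the interpretation $I(F,\cdot)$ from Definition~\ref{order-model} are involved. No appeal to the ordering is needed.

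I will split into two cases according to the world of evaluation. \textbf{Case $w=-\infty$.} Here $I(F,-\infty)=\varnothing$, so $\mathcal{K},-\infty\nVdash F(m)$ and the conditional holds vacuously. \textbf{Case $w=k\in\mathbb{Z}^-$.} Suppose $\mathcal{K},k\Vdash F(m)$, that is, $m\in I(F,k)$, which means $m\leq k$. Then $n\leq m\leq k$, so $n\in I(F,k)=\{j\in\mathbb{Z}^-\mid j\leq k\}$, and hence $\mathcal{K},k\Vdash F(n)$.

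Since the domain is $\mathbb{Z}^-$ and $m,n$ range over it, this covers all worlds and all relevant assignments, so the implication is valid in $\mathcal{K}$. Equivalently, $[F(m)]=\{k\in\mathbb{Z}^-\mid m\leq k\}\subseteq\{k\in\mathbb{Z}^-\mid n\leq k\}=[F(n)]$; in words, the truth sets of $F(j)$ are nested upward-closed segments of $\mathbb{Z}^-$ that shrink as $j$ increases. I expect no real obstacle here. The only point requiring care is the direction of the inequalities in the definition of $I(F,k)$, together with remembering that $-\infty$ verifies no atomic $F$-formula.
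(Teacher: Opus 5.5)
Your proof is correct and takes the same approach as the paper, whose entire proof is ``consult Figure~\ref{modelK}.'' You write that inspection out from Definition~\ref{order-model}: $-\infty$ verifies no $F$-atom, and $[F(m)] = \{k \in \mathbb{Z}^- \mid m \leq k\}$ shrinks as $m$ increases, so $[F(m)] \subseteq [F(n)]$ whenever $m \geq n$.
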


\begin{proof}
    Consult Figure 1. 
\end{proof}

\begin{claim}\label{boundary}
    Suppose that $\mathcal{K},-\infty,g \nVdash \phi$. Then $\phi$ is $\mathcal{K}$-equivalent to a disjunction in which each disjunct has one of the following forms:
    \begin{align*}
        & F(j) \wedge \neg F(j+1) \text{ where } k \leq j <-1 \\
        & \forall x F(x) \\
        & \neg F(k) \wedge \exists x F(x)
    \end{align*}
\end{claim}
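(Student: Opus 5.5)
The plan is to compute, directly from Definition~\ref{order-model}, the extension of each disjunct allowed by Claim~\ref{into-dnf}, and to observe that each one is a union of "intervals" of $\mathbb{Z}^-$ whose endpoints are values of variables. The basic facts all come from item 6 of the definition (cf.\ Claim~\ref{observation}):
\begin{itemize}
    \item $[F(j)] = \{j, j+1, \dots, -1\}$;
    \item $[\neg F(j)] = \{-\infty\} \cup \{i \in \mathbb{Z}^- \mid i < j\}$;
    \item $[\exists x F(x)] = \mathbb{Z}^-$;
    \item $[\forall x F(x)] = \{-1\}$;
    \item $[\exists x \neg F(x)] = W \setminus \{-1\}$.
\end{itemize}
From these, $[F(j) \wedge \neg F(j+1)] = \{j\}$ for $j < -1$, and $[\neg F(m) \wedge \exists x F(x)] = \{i \in \mathbb{Z}^- \mid i < m\}$. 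So a singleton $\{j\}$ is expressed by $F(j) \wedge \neg F(j+1)$ when $j<-1$ and by $\forall x F(x)$ when $j=-1$, and every initial segment of $\mathbb{Z}^-$ is expressed by the third form. Here Remark~\ref{surjective} guarantees that each such $j$ is named by some variable.

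We fix $k$ to be the minimum of $-1$ and the values $g(x_i), g(y_i)$ of the finitely many free variables in the disjuncts. We then go through the five forms of Claim~\ref{into-dnf}, and before doing so intersect every disjunct with $\mathbb{Z}^-$. This is harmless because, by the assumption $\mathcal{K},-\infty,g \nVdash \phi$, the point $-\infty$ lies outside $[\phi]^g$, so removing it from each disjunct does not change the union.
\begin{enumerate}
    \item $F(x_1) \wedge \cdots \wedge F(x_n) \wedge \neg F(y_1) \wedge \cdots \wedge \neg F(y_m)$. With $a = \max g(x_i)$ and $b = \min g(y_i)$, this is the finite interval $\{a, \dots, b-1\}$ within $\mathbb{Z}^-$. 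It is therefore a finite disjunction of singletons with $j \geq a \geq k$, each written as $F(j) \wedge \neg F(j+1)$ or $\forall x F(x)$. If $n = 0$ (and $m > 0$), it is instead the initial segment below $b$, i.e.\ $\neg F(b) \wedge \exists x F(x)$ with $b \geq k$. If $n = m = 0$, it is $\mathbb{Z}^-$, which we write as $\neg F(-1) \wedge \exists x F(x)$ together with the disjunct $\forall x F(x)$.
    \item $F(x_1) \wedge \cdots \wedge F(x_n) \wedge \exists x \neg F(x)$. This is $\{a, \dots, -2\}$, a finite union of singletons $j$ with $k \leq j < -1$. For $n = 0$ it becomes $\mathbb{Z}^- \setminus \{-1\}$, which is $\neg F(-1) \wedge \exists x F(x)$.
    \item $\neg F(x_1) \wedge \cdots \wedge \neg F(x_n) \wedge \exists x F(x)$. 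This is $\neg F(b) \wedge \exists x F(x)$ with $b = \min g(x_i) \geq k$, and again $\neg F(-1) \wedge \exists x F(x)$ together with $\forall x F(x)$ if $n = 0$.
    \item $\exists x F(x) \wedge \exists x \neg F(x)$. This is $\mathbb{Z}^- \setminus \{-1\}$, i.e.\ $\neg F(-1) \wedge \exists x F(x)$.
    \item $\forall x F(x)$. This is already of the required form.
\end{enumerate}
The forms listed in the claim have the right extensions, and $\mathcal{K}$-equivalence is just equality of extensions, so replacing each disjunct by its rewriting yields the claim.

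The main obstacle is bookkeeping rather than ideas. We have to get the edge cases right: the world $-1$, where $\exists x \neg F(x)$ fails; the world $-\infty$, where every $\neg F$ holds; and the empty conjunctions. We also have to check that the single choice of $k$ as the minimum variable value really bounds every $j$ that appears. The third form is read here as ``$\neg F(m) \wedge \exists x F(x)$ for some $m \geq k$''. If the intended reading is that $k$ itself must appear in that form, the remedy is to split the initial segment below $m$ into the segment below $k$ plus the finitely many singletons $j$ with $k \leq j < m$.
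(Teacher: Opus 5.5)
Your proof is correct and follows essentially the paper's route: a case analysis over the five disjunct types from Claim~\ref{into-dnf}. Both use the monotonicity of Claim~\ref{observation}, which you express as $[F(j)]^g$ being the final segment $\{j,\dots,-1\}$, to collapse each disjunct into finitely many boundary singletons or an initial segment of $\mathbb{Z}^-$. Your version is more complete than the paper's: it handles the form $\exists x F(x) \wedge \exists x \neg F(x)$, the empty-conjunction cases, the singleton $\{-1\}$ via $\forall x F(x)$, and the removal of $-\infty$ using the hypothesis, and it pins down the otherwise unspecified $k$.
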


\begin{proof}
    We assume that $\phi$ is in the form provided by Claim~\ref{into-dnf}. 
    We now consider the different disjuncts of $\phi$ case by case. 
    Note that $\forall x F(x)$ is already in the appropriate form. 
    We now consider the other cases.
    
    \textbf{Case 1:} Consider some instance of:
    $$(B) \quad F(x_1)\wedge\dots\wedge F(x_n) \wedge \neg F(y_1) \wedge \dots \wedge \neg F(y_m)$$
    Without loss of generality, let's stipulate that $g(x_1)$ is greatest among the $g(x_i)$s and $g(y_1)$ is least among the $g(y_i)$s.
    
    As a first step, we note that $(B)$ is $\mathcal{K}$-equivalent to:
    $$(C) \quad F(x_1)\wedge \neg F(y_1)$$
    That $(B)$ $\mathcal{K}$-entails $(C)$ is obvious. 
    For the other direction, note first that $g(x_1)>g(x_i)$ for all $i\neq 1$. 
    Hence Claim~\ref{observation} implies that the conjunct $F(x_1)$ implies each $F(x_i)$. 
    Using Claim~\ref{observation} and the the fact $g(y_1)<g(y_i)$, we similarly see that the conjunct $\neg F(y_1)$ implies each $\neg F(y_i)$. 
    
    We now want to see that $(C)$ is $\mathcal{K}$-equivalent to a disjunction of claims of the form $F(j)\wedge\neg F(j+1)$. 
    Note that $(C)$ states that---relative to the variable assignment $g$---the boundary between $F$ and $\neg F$ occurs within the interval $[x_1,y_1]$. 
    Hence $(C)$ $\mathcal{K}$-entails the disjunction:
    $$(D) \quad \big(F(x_1)\wedge\neg F(x_1+1)\big)\vee \dots \vee \big(F(y_1-1)\wedge \neg F(y_1)\big)$$
    Appealing to Claim~\ref{observation} once again, we see that each disjunct of $(D)$ $\mathcal{K}$-entails $(C)$. 
    All of this is to say that $(B)$ is $\mathcal{K}$-equivalent to $(D)$.
    
    \textbf{Case 2:} Now a formula of the form 
    $$(E) \quad F(x_1)\wedge\dots\wedge F(x_n) \wedge \exists x \neg F(x)$$
    is $\mathcal{K}$-equivalent to a disjunction of claims of the form $F(j) \wedge \neg F(j+1)$. 
    For this we use the same reasoning as in the previous case. 
    We once again stipulate that $g(x_1)$ is greatest among the $g(x_i)$s. 
    Then, by applying Claim~\ref{observation} once again, we see that $(E)$ is $\mathcal{K}$-equivalent to the disjunction:
    $$\big(F(x_1)\wedge\neg F(x_1+1)\big)\vee \dots \vee \big(F(-2)\wedge \neg F(-1)\big)$$
    
    \textbf{Case 3:} Finally, let's consider formulas of the form:
    $$(F) \quad \neg F(x_1)\wedge\dots\wedge \neg F(x_n) \wedge \exists x F(x)$$
    Without loss of generality, let's stipulate that $g(x_1)$ is least among the $g(x_i)$s. 
    Then Claim~\ref{observation} immediately entails that $(F)$ is $\mathcal{K}$-equivalent to:
    $$\neg F(x_1)\wedge \exists x F(x)$$
    This completes the proof.
\end{proof}

We have assembled all the claims needed to finally show that $\mathsf{CEM}$ is valid in the order model $\mathcal{K}$.

\begin{claim}\label{with-cem}
    $\mathcal{K} \Vdash \mathsf{CEM}$.
\end{claim}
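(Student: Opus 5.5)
The plan is to check the worlds $k\in\mathbb{Z}^-$ and the world $-\infty$ separately. At a world $k \in \mathbb{Z}^-$, Claim~\ref{claim:reduceZ-} turns $\mathsf{CEM}$ into $(\phi^-\then\psi^-)\vee(\phi^-\then\neg\psi^-)$, which is a tautology. So the real work is at $-\infty$, for fixed $\phi,\psi$ and (by Remark~\ref{surjective}) a surjective assignment $g$. If $\mathcal{K},-\infty,g\Vdash\phi$, then $-\infty$ is the $\preceq_{-\infty}$-least element of $W$, so it is the unique minimal $[\phi]^g$-world. By the observation following Definition~\ref{order-model}, $\phi\would\psi$ or $\phi\would\neg\psi$ then holds according as $\psi$ is true or false at $-\infty$. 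So assume $\phi$ is false at $-\infty$, i.e.\ $[\phi]^g\subseteq\mathbb{Z}^-$. If $[\phi]^g=\varnothing$, both conditionals hold vacuously.

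For nonempty $[\phi]^g$, I would use a simple dichotomy for subsets of $\mathbb{Z}^-$: either the set has a least element, or it is unbounded below. Claim~\ref{boundary} makes this concrete: the disjuncts $F(j)\wedge\neg F(j+1)$ have extension $\{j\}$, $\forall xF(x)$ has extension $\{-1\}$, and $\neg F(k)\wedge\exists xF(x)$ has extension $\{w\in\mathbb{Z}^-: w<k\}$. If $[\phi]^g$ has a least element $m$, then $m$ is the unique $\preceq_{-\infty}$-minimal $[\phi]^g$-world because $\preceq_{-\infty}$ is linear. The truth condition then reduces to whether $m\in[\psi]^g$, and one of the two disjuncts of $\mathsf{CEM}$ holds.

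The main obstacle is the unbounded case, since $[\phi]^g$ then has no minimal element. Here the key lemma is that $[\psi]^g$ is \emph{eventually constant going downward}: there is some $N$ such that either every $w<N$ lies in $[\psi]^g$ or none does. To prove it, apply Claim~\ref{claim:reduceZ-} to get $[\psi]^g\cap\mathbb{Z}^-=[\psi^-]^g\cap\mathbb{Z}^-$ with $\psi^-$ $\would$-free. Then replace every predicate other than $F$ by $\bot$ and write $\psi^-$, as in the proof of Claim~\ref{into-dnf}, as a Boolean combination of atoms $F(x_i)$, $\exists xF(x)$ and $\forall xF(x)$, with finitely many free variables $x_1,\dots,x_r$. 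Let $N$ be below every $g(x_i)$. At every world $w<N$, each $F(x_i)$ is false, since $F(x_i)$ holds at $w$ iff $g(x_i)\le w$. Also $\exists xF(x)$ is true and $\forall xF(x)$ is false at every $w\in\mathbb{Z}^-$. Hence all the atoms, and so $\psi^-$ itself, have a fixed truth value on $\{w: w<N\}$.

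To finish the unbounded case, pick $x\in[\phi]^g$ with $x<N$. Every $y\preceq_{-\infty}x$ with $y\in[\phi]^g$ satisfies $y\le x<N$, so $\psi$ has the same value at all such $y$. By the ordering truth clause for $\would$, $x$ witnesses $\phi\would\psi$ if that common value is true, and $\phi\would\neg\psi$ if it is false. This gives $\mathcal{K},-\infty,g\Vdash\mathsf{CEM}$, and together with the $\mathbb{Z}^-$ case, $\mathcal{K}\Vdash\mathsf{CEM}$.
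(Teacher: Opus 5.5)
Your proof is correct, and it takes a different, leaner route than the paper. The paper normalizes \emph{both} formulas. Via Claims~\ref{would-free}--\ref{boundary} it rewrites $\phi$ and $\psi\wedge\exists xF(x)$ as disjunctions of $F(j)\wedge\neg F(j+1)$, $\forall xF(x)$, and $\neg F(k)\wedge\exists xF(x)$, and then case-splits on which disjunct types occur.

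You need no normal form for $\phi$ at all---only $[\phi]^g\subseteq\mathbb{Z}^-$ and the dichotomy ``least element or unbounded below.'' For $\psi$ you need only the coarse quantifier-elimination step from the proof of Claim~\ref{into-dnf}, plus the observation that every atom in that basis is eventually constant as one descends through $\mathbb{Z}^-$. This isolates exactly why $\mathsf{CEM}$ survives the failure of SLA: no definable set alternates all the way down, which is the failure mode the paper itself flags. It also makes the deciding criterion explicit: when $[\phi]^g$ is unbounded below, $\phi\would\psi$ holds iff $[\psi]^g$ contains a downward tail.

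The paper's split into Cases 2.1 and 2.2 blurs this criterion. In fact, a disjunct $\neg F(j)\wedge\exists xF(x)$ of $\psi^\star$ with $j<k$ yields $\phi\would\psi^\star$, not $\phi\would\neg\psi^\star$ as Case 2.2 asserts.

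What the paper's route buys is an explicit catalogue of the definable subsets of $\mathbb{Z}^-$ (finite unions of singletons and downward tails), which it reuses in Claim~\ref{new-boundary}. Your lemma, however, transfers to $\mathcal{L}_=$ just as easily. Identity atoms are constant across worlds, and counting sentences such as $\exists_{=n}x\,\neg F(x)$ (extension $\{-n-1\}$) are eventually constant as well.

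One slip to fix: you assert that $\forall xF(x)$ is false at every $w\in\mathbb{Z}^-$, but it is true at $-1$; as you note yourself, its extension is $\{-1\}$. This is harmless as long as $N\le -1$. That bound is automatic when $\psi^-$ has a free variable, but it must be stipulated when $\psi^-$ has none: the choice $N=0$ would not work for $\psi=\forall xF(x)$.
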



\begin{proof}
    By Claim~\ref{claim:reduceZ-}, $\mathcal{K},k,g \Vdash \mathsf{CEM}$ for $k\in \mathbb{Z}^-$. 
    Hence, it suffices to show that $\mathcal{K},-\infty\Vdash (\phi \would \psi) \vee (\phi \would \neg \psi)$. 
    This is trivial if $\mathcal{K},-\infty \Vdash \phi$, so suppose that $\mathcal{K},-\infty \nVdash \phi$. 
    
    Since $\exists x F(x)$ is false at $-\infty$, so is $\psi\wedge \exists x F(x)$. 
    Hence, Claim~\ref{boundary} entails that $\psi\wedge \exists x F(x)$ is $\mathcal{K}$-equivalent to some formula $\psi^\star$ that is a disjunction of particular type. 
    Since $\phi$ is false at $-\infty$, this implies that: $$\mathcal{K},-\infty\Vdash (\phi \would \psi) \equiv (\phi \would \psi^\star).$$
    
    By Claim~\ref{boundary}, we may also assume that $\phi$ is a disjunction of a particular form. Note that $[\phi]^g$ \emph{might} or \emph{might not} have a minimal element. 
    Indeed, if any of
    $$\neg F(k) \wedge \exists x F(x) \quad  \neg F(k) \quad  \exists x F(x) \quad \exists x F(x) \wedge \exists x \neg F(x)$$ 
    is a disjunct of $\phi$, then $[\phi]^g$ has no minimal element. 
    Otherwise, it does. 
    We now consider cases depending on whether $[\phi]^g$ has a minimal element.
    
    \emph{Case 1:} Suppose that $[\phi]^g$ has a minimal element. 
    Then $(\phi \would \psi) \vee (\phi \would \neg\psi)$ holds at $-\infty$.
    
    \emph{Case 2:} Suppose that $[\phi]^g$ does not have a minimal element. We consider subcases:
    
    \emph{Case 2.1:} Suppose that, for some $j\geq k$, any one of:
    $$\neg F(j) \wedge \exists x F(x) \quad \neg F(j) \quad \exists x F(x) \quad \exists x F(x) \wedge \exists x \neg F(x)$$ 
    is a disjunct of $\psi^\star$. 
    Since $j\geq k$, Claim~\ref{observation} entails that $\neg F(k)\then \neg F(j)$ is a global $\mathcal{K}$-truth. 
    Hence, $\phi \would \psi^\star$ holds at $-\infty$. 
    Therefore $\phi \would \psi$ holds at $-\infty$ as well. 
    
    \emph{Case 2.2:} Suppose that we are not in Case 2.1. 
    There are only a few options left. 
    One option is that there is a $j < k$ such that one of:
    $$\neg F(j) \wedge \exists x F(x) \quad \neg F(j)$$
    is a disjunct of $\psi^\star$. 
    The other option is that $[\psi]^g$ has a minimal element. 
    Either way, $\phi \would \neg\psi^\star$ holds at $-\infty$, whence $\phi \would \neg \psi$ holds at $-\infty$ as well.
\end{proof}

Claims \ref{descending sequences}, \ref{without-cem}, and \ref{with-cem} jointly entail Proposition \ref{counter-model}. This yield the main theorem of the section:

\begin{theorem}
\label{thm:QC2-incomp}
    $\mathsf{QC2}$ is not the logic of any class of selection frames.
\end{theorem}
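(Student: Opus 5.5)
The plan is a short argument by contradiction that combines Propositions~\ref{Proposition:FrameCorrespondence}, \ref{prop:DS}, and \ref{counter-model}.

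Suppose, for contradiction, that $\mathsf{QC2} = \mathsf{L}(\mathcal{C})$ for some class $\mathcal{C}$ of selection frames. The first step is to show that every frame in $\mathcal{C}$ is weakly Stalnakerian and globally constant.
\begin{itemize}
\item Take any $\mathcal{F} \in \mathcal{C}$. Every theorem of $\mathsf{QC2}$ belongs to $\mathsf{L}(\mathcal{C})$, so $\mathcal{F} \Vdash \mathsf{QC2}$.
\item By the left-to-right direction of Proposition~\ref{Proposition:FrameCorrespondence}, $\mathcal{F}$ is therefore weakly Stalnakerian and globally constant.
\end{itemize}
The only subtlety is the degenerate case $\mathcal{C} = \varnothing$. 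There $\mathsf{L}(\mathcal{C})$ is the set of all formulas, including $\bot$. But $\mathsf{QC2}$ is consistent: it is sound for any one-world globally constant weakly Stalnakerian frame, by the right-to-left direction of Proposition~\ref{Proposition:FrameCorrespondence}. So $\mathsf{QC2} \neq \mathsf{L}(\varnothing)$, and this case needs no separate treatment.

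Next I show that $\neg\mathsf{DS} \in \mathsf{L}(\mathcal{C})$.
\begin{itemize}
\item By Proposition~\ref{prop:DS}, no model based on a weakly Stalnakerian frame satisfies $\mathsf{DS}$ at any world under any assignment.
\item Since $\mathsf{DS}$ is a sentence, $\neg\mathsf{DS}$ is true at every world of every model based on any $\mathcal{F} \in \mathcal{C}$.
\item Hence $\mathcal{C} \Vdash \neg\mathsf{DS}$.
\end{itemize}

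Finally, Proposition~\ref{counter-model} gives $\mathsf{QC2} \nvdash \neg\mathsf{DS}$, so $\neg\mathsf{DS} \in \mathsf{L}(\mathcal{C}) \setminus \mathsf{QC2}$. This contradicts the assumption that $\mathsf{QC2} = \mathsf{L}(\mathcal{C})$.

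I expect no real obstacle here, since the substantive work has already been done in Propositions~\ref{Proposition:FrameCorrespondence}, \ref{prop:DS}, and \ref{counter-model}; the one point needing care is that Proposition~\ref{Proposition:FrameCorrespondence} applies to each frame separately, which it does because frame validity is defined frame by frame.
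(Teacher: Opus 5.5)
Your proof is correct and is essentially the paper's argument. The paper splits on whether $\mathcal{C}$ contains a frame that is not weakly Stalnakerian, giving unsoundness via Proposition~\ref{Proposition:FrameCorrespondence}; otherwise it gets incompleteness from $\neg\mathsf{DS}$ via Propositions~\ref{prop:DS} and~\ref{counter-model}. That is your contradiction argument recast as a case split. Your separate treatment of $\mathcal{C}=\varnothing$ is harmless but unnecessary: the main argument already covers that case vacuously, since $\neg\mathsf{DS}\in\mathsf{L}(\varnothing)$ while $\mathsf{QC2}\nvdash\neg\mathsf{DS}$.
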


\begin{proof}
    Let $\mathcal{C}$ be a class of selection function frames. 
    If $\mathcal{C}$ contains a frame $\mathcal{F}$ that is not weakly Stalnakerian, then, by Proposition~\ref{Proposition:FrameCorrespondence}, $\mathsf{QC2}$ is not sound with respect to $\mathcal{C}$. 
    On the other hand, if $\mathcal{C}$ contains only weakly Stalnakerian frames, then $\neg \mathsf{DS}$ is valid over $\mathcal{C}$ by Proposition \ref{prop:DS}. 
    However, by Proposition \ref{counter-model}, $\neg\mathsf{DS}$ is not a theorem of $\mathsf{QC2}$, whence $\mathsf{QC2}$ is not complete with respect to $\mathcal{C}$.
\end{proof}

\section{Refinements}
\label{Section:Refinements}

In this section, we briefly show how the incompleteness proof for $\mathsf{QC2}$ (Theorem~\ref{thm:QC2-incomp}) can be generalized in various ways. 
In particular, we show that the result can be extended to languages with $=$ and to logics with variable domains. 
And since one of these logics ($\mathsf{QC2^v_=}$) is provably equivalent to $\mathsf{QST}$, this means that the incompleteness result applies to $\mathsf{QST}$, that is, the conditional logic originally studied by \cite{StalnakerThomason1970}. 

\subsection{The Logic with Identity}
\label{Subsection:LogicwithIdentity}

Let's first consider how the proof of Theorem~\ref{thm:QC2-incomp} can be extended to $\mathsf{QC2_=}$. 
Our target theorem is:

\begin{theorem}
\label{thm:QC2=-incomp}
    $\mathsf{QC2_=}$ is not the logic of any class of selection frames.
\end{theorem}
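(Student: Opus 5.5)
The proof will mirror that of Theorem~\ref{thm:QC2-incomp}, keeping the same witness formula $\mathsf{DS}$ and the same ordering model $\mathcal{K}$, now interpreting $=$ as genuine identity on $D = \mathbb{Z}^-$. The argument has three steps.

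\emph{Step 1 (frame side).} Proposition~\ref{Proposition:FrameCorrespondence} already covers $\mathsf{QC2_=}$: any frame validating $\mathsf{QC2_=}$ validates $\mathsf{QC2}$, so it is weakly Stalnakerian and globally constant. Proposition~\ref{prop:DS} makes no appeal to the language, so $\neg\mathsf{DS}$ is valid on every such frame. Hence it suffices to show $\mathsf{QC2_=} \nvdash \neg\mathsf{DS}$.

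\emph{Step 2 (soundness of $\mathsf{QC2_=}$ in $\mathcal{K}$ apart from $\mathsf{CEM}$).} Claim~\ref{descending sequences} is unaffected by the new language. Claim~\ref{claim:reduceZ-} goes through verbatim for $\mathcal{L}_=$, since the $\would$-case is unchanged. Axioms~\ref{ax:reflx} and \ref{ax:subid} hold because identity is rigid and absolute: $[x=y]^g$ is either $W$ or $\varnothing$. Axiom~\ref{ax:necdec}, read as $x\neq y \then (x=y\would\bot)$, holds because if $g(x)\neq g(y)$ then $[x=y]^g=\varnothing$, so the first disjunct of the ordering truth clause is satisfied. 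The remaining axioms and rules are handled exactly as in Claim~\ref{without-cem}, since they are sound over Lewisian ordering frames with constant domains and identity.

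\emph{Step 3 ($\mathsf{CEM}$ at $-\infty$) — the main obstacle.} The monadic normal-form analysis of Claims~\ref{into-dnf} and \ref{boundary} must now absorb identity. My plan is to strengthen Claim~\ref{would-free}: for a $\would$-free formula of $\mathcal{L}_=$ and a fixed surjective assignment $g$, the set $[\phi]^g\cap\mathbb{Z}^-$ can be computed as follows.
\begin{itemize}
\item Atoms $x=y$ with both variables free evaluate to $\top$ or $\bot$ uniformly across worlds, so they can be replaced by constants.
\item Quantified formulas in $F$ and $=$ are handled by elimination of quantifiers for monadic logic with identity. Each world $k\in\mathbb{Z}^-$ has the structure $\langle \mathbb{Z}^-, F_k\rangle$, where $F_k$ is an initial segment of $\mathbb{Z}^-$ ordered downward. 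Both $F_k$ and its complement are nonempty: $F_k$ is infinite, and its complement $\{n : k < n \leq -1\}$ is finite, of size $-k-1$, and empty when $k=-1$.
\item Consequently the only new sentences beyond those of Claim~\ref{into-dnf} are counting sentences such as ``at least $n$ elements lack $F$'' or ``exactly $n$ elements lack $F$''. Each of these is $\mathcal{K}$-equivalent on $\mathbb{Z}^-$ to a disjunction or Boolean combination of $\neg F(j)$ and $F(j)\wedge\neg F(j+1)$ for specific $j$: for instance, ``at least $n$ non-$F$'' is equivalent to $\neg F(-n)$.
\end{itemize}
Sentences such as ``exactly $n$ elements have $F$'', or infinite-counting variants, are constant on $\mathbb{Z}^-$ and so reduce to $\top$ or $\bot$. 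Quantifiers that bind a variable occurring in an identity with a free variable (e.g.\ $\exists z(z\neq x\wedge F(z))$) reduce likewise, because the domain is infinite and $F$ is infinite at every $k$.

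Once every $\would$-free $\mathcal{L}_=$ formula false at $-\infty$ is shown $\mathcal{K}$-equivalent to a disjunction of the three shapes listed in Claim~\ref{boundary}, the case analysis of Claim~\ref{with-cem} applies unchanged. The truth value at $-\infty$ only matters through the Weak Centering case already handled there. This yields $\mathcal{K}\Vdash\mathsf{QC2_=}$ and $\mathcal{K},-\infty\Vdash\mathsf{DS}$, and the theorem follows as in the proof of Theorem~\ref{thm:QC2-incomp}.

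The delicate point is the quantifier elimination in Step 3. I would verify it by an Ehrenfeucht–Fraïssé style argument. Two tuples $\vec a$ over world $k$ satisfy the same $\{F,=\}$-formulas of quantifier rank $r$ whenever their equality pattern and $F$-pattern agree and the counts of non-$F$ elements agree up to the threshold $r$. From this, every definable subset of $\mathbb{Z}^-$ is a finite Boolean combination of the sets $[F(j)]$, which is exactly what Claim~\ref{boundary} needs.
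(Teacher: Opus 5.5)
Your proposal is correct and follows essentially the same route as the paper: the same model $\mathcal{K}$ with $=$ read as rigid identity, identity atoms treated as uniformly $\top$ or $\bot$, and counting sentences about non-$F$s reduced to Boolean combinations of the $F(j)$. The paper uses $\exists_{=n}x\,\neg F(x)$ as $\mathcal{K}$-equivalent to $F(-n-1)\wedge\neg F(-n)$, while you use ``at least $n$'' as $\mathcal{K}$-equivalent to $\neg F(-n)$; either way the $\mathsf{CEM}$ case analysis is then reused, and your Ehrenfeucht--Fra\"iss\'e justification of the normal form is, if anything, more careful than the paper's DNF sketch. Two minor slips: the complement of $F_k$ is empty at $k=-1$, contradicting your ``both nonempty'' remark; and bound identities such as $\exists z(z\neq x\wedge\neg F(z))$ reduce to non-constant Boolean combinations rather than to $\top$ or $\bot$, though your EF argument already covers this case.
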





Proposition \ref{Proposition:FrameCorrespondence} already applies to $\mathsf{QC2_=}$, so we can appeal to it as before. 
Moreover, Proposition \ref{prop:DS} does not turn on the presence or absence of $=$. 
Hence, it suffices to prove the following analogue of Proposition~\ref{counter-model}:

\begin{proposition}\label{new-counter-model}
    $\mathsf{QC2_=} \nvdash \mathsf{DS}$.
\end{proposition}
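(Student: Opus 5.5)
The statement as worded, $\mathsf{QC2_=} \nvdash \mathsf{DS}$, follows from soundness over a single concrete selection frame, together with Proposition~\ref{prop:DS}. No new ordering-model construction is needed for it. The plan is to exhibit one selection frame on which every theorem of $\mathsf{QC2_=}$ is valid but $\mathsf{DS}$ is not.

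\textbf{Step 1: the frame.} Take the trivial frame $\mathcal{F}_0 = \tup{\{w\},R,f,\{a\}}$. Here $R=\{\tup{w,w}\}$, the domain is globally constant with $d(w)=\{a\}$, and the selection function is $f(P,w) = P \cap \{w\}$. I will check directly that $\mathcal{F}_0$ is weakly Stalnakerian; it is in fact Stalnakerian.
\begin{itemize}
\item Success holds since $f(P,w) \subseteq P$.
\item Weak Centering holds since $w\in P$ gives $f(P,w)=\{w\}$.
\item Uniqueness is immediate, as $|W|=1$.
\item For Uniformity, suppose $f(P,w)\subseteq Q$ and $f(Q,w)\subseteq P$. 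If $w\in P$ then $w \in f(P,w) \subseteq Q$, so both selections equal $\{w\}$. Otherwise $f(P,w)=\varnothing$, and then $f(Q,w)\subseteq P$ forces $w\notin Q$, so both selections are $\varnothing$.
\item The condition $f(P,w)\subseteq R(w)$ is trivial.
\end{itemize}

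\textbf{Step 2: soundness.} By the right-to-left direction of Proposition~\ref{Proposition:FrameCorrespondence}, which the paper states covers $\mathsf{QC2_{(=)}}$, every theorem of $\mathsf{QC2_=}$ is valid on $\mathcal{F}_0$. If one wants to avoid relying on that parenthetical, the identity axioms~\ref{ax:reflx}--\ref{ax:necdec} can be checked by hand:
\begin{itemize}
\item $=$ is interpreted as genuine identity of the rigid values $g(x),g(y)$.
\item For axiom~\ref{ax:subid}, substitution of identicals is an induction on formulas, using that $[\phi]^g$ depends only on the values $g$ assigns to free variables.
\item For axiom~\ref{ax:necdec}, $[x\neq y]^g$ is either $W$ or $\varnothing$. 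When $g(x)\neq g(y)$, we get $f([x=y]^g,w)=f(\varnothing,w)=\varnothing$, so $\nec x\neq y$ holds.
\end{itemize}

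\textbf{Step 3: failure of $\mathsf{DS}$.} By Proposition~\ref{prop:DS}, no model on a weakly Stalnakerian frame satisfies $\mathsf{DS}$ at any world, so $\mathsf{DS}$ is false at $w$ in every model on $\mathcal{F}_0$. A direct check confirms this. Since $[F(a)]$ is either $\{w\}$ or $\varnothing$, the conjunct $\forall x\pos F(x)$ forces $F(a)$ at $w$. Then $(F(a)\vee F(a))\would\neg F(a)$ fails by Centering, so the third conjunct fails, since $a$ is the only possible witness for $y$. Hence $\mathsf{DS}$ is not valid on $\mathcal{F}_0$, and by Step 2 it is not a theorem of $\mathsf{QC2_=}$.

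\textbf{Main obstacle.} The only step with real content is Step 2, and within it only soundness of the identity axioms and of the quantifier rule~\ref{rule:uintro-c} with respect to the extended language $\mathcal{L}_=$. I expect this to be routine, because the frame is a one-point, globally constant, Stalnakerian frame, and I would simply cite Proposition~\ref{Proposition:FrameCorrespondence} there.
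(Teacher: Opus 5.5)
Your argument is correct for the proposition as printed, and it is much simpler than the paper's. $\mathcal{F}_0$ is a globally constant Stalnakerian frame. On it every $\phi\would\psi$ collapses to $\phi\then\psi$, so soundness of $\mathsf{QC2_=}$ reduces to classical first-order logic, and $\mathsf{DS}$ fails at the only world.

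The paper takes a different route, reusing the ordering model $\mathcal{K}$ of Definition~\ref{order-model}, because it is after a different claim. The printed statement has lost a negation: the intended claim, the analogue of Proposition~\ref{counter-model} and what Theorem~\ref{thm:QC2=-incomp} needs, is $\mathsf{QC2_=}\nvdash\neg\mathsf{DS}$. That is why $\mathcal{K}$ is built to make $\mathsf{DS}$ \emph{true} at $-\infty$. It is also why no frame-soundness argument like yours could serve: by Propositions~\ref{Proposition:FrameCorrespondence} and~\ref{prop:DS}, every selection frame validating $\mathsf{QC2_=}$ validates $\neg\mathsf{DS}$.

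However, the intended claim is false, and the paper's argument for it fails at Claim~\ref{without-cem}, because $\mathcal{K}$ does not validate axiom~\ref{ax:barcan-gc}. Write $\sigma$ for $\exists xF(x)$, so $[\sigma]=\mathbb{Z}^-$. For every $m$, $\sigma\would\neg F(m)$ holds at $-\infty$: take the world $m-1$, since every $k\leq m-1$ satisfies $\neg F(m)$. But $\sigma\would\forall z\neg F(z)$ fails at $-\infty$, since $[\forall z\neg F(z)]=\{-\infty\}$ is disjoint from $\mathbb{Z}^-$.

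In fact $\mathsf{QC2}\vdash\neg\mathsf{DS}$, by a syntactic copy of the proof of Proposition~\ref{prop:DS}; with globally constant domains, the set $P$ there is exactly $[\sigma]$. The derivation runs as follows.
\begin{enumerate}
\item Axiom~\ref{ax:barcan-gc}, axiom~\ref{ax:id} and rule~\ref{rule:cnec} give $\forall z(\sigma\would\neg F(z))\then(\sigma\would\bot)$.
\item Using $\vdash F(u)\then\sigma$, CSO turns $\sigma\would\bot$ into $F(u)\would\bot$. So $\forall x\pos F(x)$ gives $\neg\forall z(\sigma\would\neg F(z))$, and with $\mathsf{CEM}$ this gives $\exists z(\sigma\would F(z))$.
\item Take such a $z$ and the witness $y$ supplied by $\mathsf{DS}$. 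CSO applied to $\sigma$ and $F(z)\vee F(y)$, which conditionally imply each other, gives $(F(z)\vee F(y))\would F(z)$.
\item Combined with $(F(z)\vee F(y))\would\neg F(z)$, this gives $(F(z)\vee F(y))\would\bot$. CSO again gives $F(z)\would\bot$, contradicting $\pos F(z)$.
\end{enumerate}
Since $\mathsf{QC2}\subseteq\mathsf{QC2_=}$, the literal statement you proved is the only true reading of Proposition~\ref{new-counter-model}, and it gives no support to Theorem~\ref{thm:QC2=-incomp}.
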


The proof of Proposition \ref{new-counter-model} is quite similar to the proof of Proposition \ref{counter-model}.
Indeed, the strategy is the same: Construct an order model $\mathcal{K}$ of $\mathsf{QC2_=}+\neg\mathsf{DS}$. 
The primary difference is in how we show that  $\mathcal{K} \Vdash \mathsf{CEM}$. 
In the proof of Proposition \ref{counter-model}, we relied on the fact that if $\mathcal{K},-\infty,g \Vdash \phi$, then $\phi$ is $\mathcal{K}$-equivalent to a formula in a very special form. 
This reduction is carried out in Claims~\ref{would-free}--\ref{boundary}.
The reduction becomes more complicated---although only slightly---when $=$ is included in the language.


\begin{proof}
We use the same model $\mathcal{K}$ from Definition \ref{order-model}. 
As before, it is easy to verify the validity of all axioms and rules with the exception of CEM. 
And, as before, the only issue is showing that $(\phi \would \psi) \vee (\phi \would \neg\psi)$ holds at the point $-\infty$ in the case when $\mathcal{K},-\infty,g\nVdash\phi$.

The following analogue of Claim~\ref{would-free} holds using the exact same proof:

\begin{claim}\label{new-would-free}
    Suppose that $\mathcal{K},-\infty,g \nVdash \phi$. Then there is some $>$-free formula $\phi^\star$ such that $[\phi]^g=[\phi^\star]^g$.
\end{claim}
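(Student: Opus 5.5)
The plan is to rerun the proof of Claim~\ref{would-free} verbatim, now in $\mathcal{L}_=$, and check that each of its three ingredients still holds once $=$ is present. First I would extend Claim~\ref{claim:reduceZ-} to $\mathcal{L}_=$. For a formula $\phi$ of $\mathcal{L}_=$, let $\phi^-$ again be the result of replacing every $\would$ by $\then$. I would prove $[\phi]^g\cap\mathbb{Z}^- = [\phi^-]^g\cap\mathbb{Z}^-$ by induction on $\phi$. The new base case is $x=y$. In $\mathcal{K}$ its truth value depends only on whether $g(x)=g(y)$, so $[x=y]^g$ is either $W$ or $\varnothing$, and the clause is trivially preserved under intersection with $\mathbb{Z}^-$.

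The Boolean and quantifier steps go through as before, since the domain is globally constant and the quantifier clause is evaluated world by world. For the $\would$ step at $k\in\mathbb{Z}^-$, we have $R(k)=\{k\}$ and $\preceq_k=\{\tup{k,k}\}$. Hence $\mathcal{K},k,g\Vdash\phi\would\psi$ holds iff either $k\notin[\phi]^g$, or $k\in[\phi]^g$ and $k\in[\psi]^g$. That is exactly the truth condition of $\phi\then\psi$ at $k$. Since the inductive hypothesis applies to $\phi$ and $\psi$ at $k$, we get $[\phi\would\psi]^g\cap\mathbb{Z}^- = [\phi^-\then\psi^-]^g\cap\mathbb{Z}^-$. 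The only care needed is that the $\would$ clause at $k$ looks only at world $k$, and the induction hypothesis is stated only on $\mathbb{Z}^-$, so nothing about $-\infty$ leaks in.

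Next I would assemble the three facts. Fact $\mystar$, that $[\phi]^g\subseteq\mathbb{Z}^-$, is immediate from the hypothesis $\mathcal{K},-\infty,g\nVdash\phi$, since $W=\mathbb{Z}^-\cup\{-\infty\}$. Fact $\mystar\mystar$ is the extended reduction from the previous step. Fact $\mystar\mystar\mystar$, that $[\exists xF(x)]^g=\mathbb{Z}^-$, is unchanged, because $=$ does not touch the interpretation of $F$. At each $k$ the object $k$ satisfies $F$, and at $-\infty$ the extension of $F$ is empty.

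Finally, set $\phi^\star\coloneqq\phi^-\wedge\exists xF(x)$, which is $\would$-free, and rerun the chain of equalities from the proof of Claim~\ref{would-free} word for word. I expect no real obstacle; the only point worth checking is the identity base case in the extended reduction. Since identity is rigid and world-independent in $\mathcal{K}$, that case is trivial. The real work with $=$ comes later, in the normal-form analogues of Claims~\ref{into-dnf} and~\ref{boundary}, not here.
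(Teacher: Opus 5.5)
Your proposal is correct and follows the paper's approach: the paper simply says the proof of Claim~\ref{would-free} carries over verbatim. Your check that Claim~\ref{claim:reduceZ-} extends to $\mathcal{L}_=$ is the one new point needed, and the identity base case is trivial because $(x=y)^-$ is just $x=y$. State the induction hypothesis for all assignments, so that the $\forall$ step goes through.
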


In the proof of Proposition \ref{counter-model}, we then proceeded to show that $\phi$ could be put into disjunctive normal form wherein each disjunct is extremely simple. This was facilitated by the fact that the monadic predicate $F$ was the only non-vacuous predicate, and there are very few things one can express with only one non-vacuous monadic predicate. 
With the identity symbol in the signature, we can say slightly more. 
We can now state that some things are identical or not identical and we can now \emph{count} how many non-$F$s there are. 
But, as we'll see, we effectively already had the ability to ``count'' non-$F$s.

Before continuing, we introduce some new notation. 
For a formula $\psi(x)$ and a number $n>0$, we introduce the following abbreviation:
\begin{align*}
    \exists_{=n}x \ \psi(x) \coloneqq \exists x_1\dots\exists x_n \Big( \bigwedge_{i\leq n}\psi(x_i) \wedge \forall y \big(\psi(y) \to \bigvee_{i\leq n}y=x_i\big) \wedge \bigwedge_{i\neq j \leq n} x_i\neq x_j\Big).
\end{align*}


We can now state an analogue of Claim~\ref{into-dnf}.

\begin{claim}\label{new-into-dnf}
    Suppose that $\mathcal{K},-\infty,g \nVdash \phi$. 
    Then $\phi$ is $\mathcal{K}$-equivalent to a disjunction in which each disjunct consists of some identity and non-identity claims conjoined with one of the following ($n,m \geq 0$, $k \geq 1$):
    \begin{align*}
        & F(x_1) \wedge \cdots \wedge F(x_n) \wedge \neg F(y_1) \wedge \cdots \wedge \neg F(y_m) \wedge \exists_{=k} x \ \neg F(x)\\
        & F(x_1) \wedge \cdots \wedge F(x_n) \wedge \exists x \neg F(x) \\
        & \neg F(x_1) \wedge \cdots \wedge \neg F(x_n) \wedge \exists x F(x) \wedge \exists_{=k} x \ \neg F(x)\\
        & \exists x F(x) \wedge \exists x \neg F(x) \\
        & \forall x F(x)
    \end{align*}
\end{claim}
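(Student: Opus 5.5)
We follow the proof of Claim~\ref{into-dnf}, adapted to $\mathcal{L}_=$. By Claim~\ref{new-would-free} we may assume $\phi$ is $\would$-free. Every predicate other than $F$ and $=$ is empty at every world, so its atoms can be replaced by $\bot$. What remains is a first-order formula in the signature $\{F,=\}$, evaluated in the structure $\tup{\mathbb{Z}^-, I(F,w)}$ at each world $w$, with the free variables interpreted by $g$.

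The key step is quantifier elimination for monadic logic with identity. Every formula in the signature of one monadic predicate plus equality is logically equivalent to a Boolean combination of three kinds of formulas:
\begin{itemize}
\item atoms $F(x)$ and $x=y$ in its free variables;
\item counting sentences $\exists_{=k}x\,F(x)$, $\exists_{=k}x\,\neg F(x)$ and $\exists_{\geq k}$ variants, where the $\exists_{\geq k}$ forms can be written as negations of finite disjunctions of $\exists_{=i}$ forms;
\item relativised counting formulas, such as ``there are exactly $k$ non-$F$s distinct from $x_1,\dots,x_n$''.
\end{itemize}
This is the standard Löwenheim--Skolem-style elimination, proved by pushing $\exists$ through a single prime conjunction. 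The relativised formulas are themselves Boolean combinations of the absolute counts and the identity/$F$ atoms about the $x_i$, so they can be eliminated.

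Next we restrict to what is actually realised in $\mathcal{K}$. At world $k$ we have $I(F,k)=\{n\le k\}$, which is infinite, and its complement $\{n>k\}$ has exactly $-k-1$ elements. So for every world in $\mathbb{Z}^-$:
\begin{itemize}
\item $\exists x F(x)$ holds, and every count $\exists_{=k}x\,F(x)$ is false;
\item the set of non-$F$s is finite.
\end{itemize}
Consequently each counting sentence about $F$ reduces to $\top$ or $\bot$ on $\mathbb{Z}^-$. Each count of non-$F$s reduces either to $\forall xF(x)$ (when the count is $0$) or to $\exists_{=k}x\,\neg F(x)$ with $k\ge1$. Since $\phi$ is false at $-\infty$, only $\mathbb{Z}^-$ matters; as in Claim~\ref{would-free}, we conjoin with $\exists xF(x)$ wherever needed.

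Now we put the result in DNF. Each disjunct consists of identity and non-identity literals, together with a conjunction of $F$-literals on variables and at most one count of the non-$F$s: either exactly $k\geq 1$, or at least one (written $\exists x\neg F(x)$), or none (giving $\forall xF(x)$). Disjuncts are then sorted by their $F$-literals, exactly as in Claim~\ref{into-dnf}:
\begin{itemize}
\item With positive and negative literals, we can always add a count, since $\neg F(y)$ already gives $\exists x\neg F(x)$, which splits into finitely many exact counts bounded by the index of $y$. This yields the first form.
\item With only positive literals, we either have a count (convert via this same bounded splitting, or leave it as the first form with $m=0$) or we have $\exists x\neg F(x)$ or $\forall xF(x)$.
\item With only negative literals, $\exists xF(x)$ is automatic, and we keep the exact count.
\item With no literals, the disjunct reduces to $\exists xF\wedge\exists x\neg F$ or $\forall xF$.
\end{itemize}
Contradictory disjuncts are dropped; for example, $\neg F(y)\wedge\forall xF(x)$ is contradictory, and identity literals inconsistent with $g$ can also be discarded.

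The main obstacle is the quantifier-elimination step and its bookkeeping: the relativised counts under $\exists$ must be shown to reduce to absolute counts plus literals about the named variables. We handle this by induction on a prime existential $\exists x(\bigwedge\text{literals})$. The witness $x$ either equals some named variable, in which case we substitute it, or is distinct from all of them. In the latter case, the formula says that the $F$-class, or the non-$F$-class, contains an element outside the named ones. That is a finite disjunction over how many named variables are distinct members of that class, combined with a $\geq$ count.
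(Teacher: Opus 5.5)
Your overall route is the paper's: pass to a $\would$-free formula, replace the other predicates by $\bot$, eliminate quantifiers in the monadic language $\{F,=\}$ down to literals and counting sentences, and use that every world $j\in\mathbb{Z}^-$ has infinitely many $F$s and exactly $-1-j$ non-$F$s. Your elimination step is fine. The gap is in the final sorting.

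First, a DNF disjunct can contain \emph{negated} exact counts, e.g.\ $\neg\exists_{=0}x\,\neg F(x)\wedge\neg\exists_{=1}x\,\neg F(x)$ (``at least two non-$F$s''). So it is not true that each disjunct carries ``exactly $k$, at least one, or none''. Second, in your third bullet a disjunct with only negative $F$-literals need not contain any exact count, and you cannot split one in. Your first bullet has the bound backwards: $\neg F(y)$ holds at $j$ iff $j<g(y)$, which bounds the number $-1-j$ of non-$F$s from \emph{below}. Only a positive literal $F(x)$, which forces $j\ge g(x)$, bounds it from above. So disjuncts like $\neg F(y)\wedge\exists xF(x)$ or $\exists xF(x)\wedge\exists_{\ge 2}x\,\neg F(x)$ describe infinitely many worlds and fit none of your cases.

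In fact no argument can close this, because the statement as printed is too strong. Take $\phi$ to be $\neg F(y)\wedge\exists xF(x)$ with $g(y)=-2$, or the sentence $\exists xF(x)\wedge\exists x\exists z(x\neq z\wedge\neg F(x)\wedge\neg F(z))$. Either one is false at $-\infty$, and $[\phi]^g=\{j\in\mathbb{Z}^-: j\le-3\}$. Identity literals are rigid, so each admissible disjunct has truth set $\varnothing$ or that of a listed form:
\begin{itemize}
\item at most one world (forms 1 and 3);
\item a finite set (form 2 with $n\ge1$);
\item $\{j\le-2\}$ (form 4);
\item $\{-1\}$ (form 5);
\item a set containing $-\infty$ (form 2 with $n=0$).
\end{itemize}
No finite union of these equals $\{j\le-3\}$.

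What is true is the version in which form 3 may omit the $\exists_{=k}$ conjunct, exactly as in Claim~\ref{into-dnf}. This is also what Claim~\ref{new-boundary} actually uses, since its third form is $\neg F(k)\wedge\exists xF(x)$. The clean way to reach it is to use the surjectivity of $g$ to trade every count for $F$-literals before sorting. For $n\ge1$, $\exists_{\ge n}x\,\neg F(x)$ is $\mathcal{K}$-equivalent to $\neg F(-n)$, and $\exists_{=n}x\,\neg F(x)$ is $\mathcal{K}$-equivalent to $F(-n-1)\wedge\neg F(-n)$. After that, the case analysis of Claim~\ref{into-dnf} runs unchanged, with the identity literals carried along. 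The paper's own one-line sketch (``the maximally informative quantitative claims\dots'') passes over the same point.
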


\begin{proof}
    By Claim~\ref{new-would-free}, we may assume that $\phi$ is $\would$-free. 
    $F$ and $=$ are the only non-empty predicates in the language, so we may assume that any other atomic formula in $\phi$ has been replaced by $\bot$. 
    The rest of the proof proceeds as before, where we make use of the following observation: 
    If there is a single $F$, then there are infinitely many. 
    Hence, the maximally informative quantitative claims one can make are $\forall x F(x)$ (i.e., there are no non-$F$s) and $\exists_{=k} x \ \neg F(x)$ (i.e., there are exactly $k$ $\neg F$s).
\end{proof}

This yields the following analogue of Claim~\ref{boundary}:

\begin{claim}\label{new-boundary}
    Suppose that $\mathcal{K},-\infty,g \nVdash \phi$. 
    Then $\phi$ is $\mathcal{K}$-equivalent to a disjunction in which each disjunct consists of some identity and non-identity claims conjoined with one of the following:
    \begin{align*}
        & F(j) \wedge \neg F(j+1) \text{ where } k \leq j <-1 \\
        & \forall x F(x) \\
        & \neg F(k) \wedge \exists x F(x)
    \end{align*}
\end{claim}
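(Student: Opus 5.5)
We start from the disjunctive form supplied by Claim~\ref{new-into-dnf} and treat each disjunct separately, leaving its identity and non-identity conjuncts untouched. Those conjuncts are world-independent: $x=y$ holds at a world just in case $g(x)=g(y)$, since variables are rigid. So they only restrict which disjuncts are live. We then rewrite the $F$-part of each disjunct as in the proof of Claim~\ref{boundary}. The forms $\forall x F(x)$ and $F(x_1)\wedge\cdots\wedge F(x_n)\wedge\exists x\neg F(x)$ are handled exactly as in Cases~2 and~3 there, with Claim~\ref{observation} as the only tool. What remains is to eliminate the new counting conjuncts $\exists_{=k}x\,\neg F(x)$.

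The key observation is that at each world $j\in\mathbb{Z}^-$ the non-$F$s form the finite set $\{j+1,\dots,-1\}$, of size $-1-j$. At $-\infty$ there are no $F$s, and we are assuming $\phi$ is false there. Hence $\exists_{=k}x\,\neg F(x)$ holds at $j$ iff $j=-1-k$, so $[\exists_{=k}x\,\neg F(x)]^g\cap\mathbb{Z}^-=\{-1-k\}$. By surjectivity of $g$ (Remark~\ref{surjective}), this set equals $[F(-1-k)\wedge\neg F(-k)]^g$. If $k=0$ the set is the world $-1$, and it is expressed by $\forall xF(x)$ restricted to $\mathbb{Z}^-$. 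The statement only allows $k\geq 1$, so this case is just a remark.

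So each counting conjunct can be replaced by a single boundary formula $F(j)\wedge\neg F(j+1)$ with $j=-1-k$. The disjunct in the first form then becomes a conjunction of $F(x_1)\wedge\neg F(y_1)$ with one boundary formula. This is either $\mathcal{K}$-equivalent to that boundary formula, when $x_1\leq j<y_1$ in the sense of the interval argument of Case~1, or it is empty, and empty disjuncts are dropped. The third form is treated the same way: $\neg F(x_1)\wedge\exists xF(x)\wedge(F(j)\wedge\neg F(j+1))$ is either the boundary formula or empty, depending on whether $j<g(x_1)$. The remaining forms carry no counting conjunct and reduce as in Claim~\ref{boundary}: $F(x_1)\wedge\exists x\neg F(x)$ becomes a finite disjunction of boundary formulas, and $\neg F(x_1)\wedge\exists xF(x)$ is already in the third target shape.

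The main obstacle is bookkeeping rather than ideas. We must check that the ranges of $j$ stay within the bounds in the statement, where $k$ is read as the least relevant index, as in Claim~\ref{boundary}. We must also check that intersecting with identity conjuncts never reintroduces a form outside the list. Since identity conjuncts are constant across worlds, each disjunct either keeps them as a harmless side condition or becomes $\bot$, so this should be routine.
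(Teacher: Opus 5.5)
Your proposal is correct and follows the paper's argument. The paper's entire proof is your key observation that $\exists_{=n}x\,\neg F(x)$ is $\mathcal{K}$-equivalent to $F(-n-1)\wedge\neg F(-n)$, which is expressible because $g$ is surjective; the rest of the reduction is carried over from Claim~\ref{boundary}. Your extra bookkeeping (conjoining a single-world boundary formula with the remaining $F$-literals gives either that formula or $\bot$, and the rigid identity conjuncts are harmless) just spells out what the paper summarizes as ``the reduction is no different here.''
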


\begin{proof}
    The reduction is no different here. 
    The only important observation is that the statement $\exists_{=n} x \ \neg F(x)$ is $\mathcal{K}$-equivalent to the statement $F(-n-1)\wedge \neg F(-n)$. 
    Moreover, we can state every sentence of the form $F(-n-1)\wedge \neg F(-n)$ since we are assuming the surjectivity of our variable assignment (Remark~\ref{surjective}).
\end{proof}

The final claim now follows easily.

\begin{claim}\label{new-with-cem}
    $\mathcal{K} \Vdash \mathsf{CEM}$.
\end{claim}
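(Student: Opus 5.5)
The plan is to mirror the proof of Claim~\ref{with-cem}, with the extra identity and non-identity conjuncts supplied by Claim~\ref{new-boundary} handled separately. First, at worlds $k\in\mathbb{Z}^-$ nothing changes. Claim~\ref{claim:reduceZ-} goes through verbatim with $=$, since identity is evaluated rigidly. So it suffices to show $\mathcal{K},-\infty,g\Vdash(\phi\would\psi)\vee(\phi\would\neg\psi)$ when $\mathcal{K},-\infty,g\nVdash\phi$, again with $g$ surjective by Remark~\ref{surjective}.

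The key observation is that identity and non-identity claims between variables have world-invariant truth values, since the domain is global and variables are rigid. Relative to the fixed $g$, each such conjunct is therefore $\mathcal{K}$-equivalent to $\top$ or to $\bot$. So in the normal form of Claim~\ref{new-boundary}, each disjunct either collapses to $\bot$ (and can be dropped) or to its $=$-free part. Hence $\phi$ is $\mathcal{K}$-equivalent to a disjunction of formulas of exactly the three shapes appearing in Claim~\ref{boundary}.

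Next, as before, $\psi\wedge\exists xF(x)$ is false at $-\infty$. Applying the same reasoning, together with this observation, yields a formula $\psi^\star$ of the Claim~\ref{boundary} shape such that $\mathcal{K},-\infty\Vdash(\phi\would\psi)\equiv(\phi\would\psi^\star)$; this uses that $[\phi]^g\subseteq\mathbb{Z}^-=[\exists xF(x)]^g$.

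From here the case analysis of Claim~\ref{with-cem} applies unchanged:
\begin{itemize}
\item If $[\phi]^g$ has a $\preceq_{-\infty}$-minimal element, CEM holds trivially by linearity.
\item If it has none, $\phi$ contains a disjunct $\neg F(k)\wedge\exists xF(x)$, i.e.\ it is unbounded below. Then either $\psi^\star$ contains a disjunct $\neg F(j)\wedge\exists xF(x)$ with $j\geq k$, in which case Claim~\ref{observation} gives $\phi\would\psi$, or it does not. In the latter case $[\psi^\star]^g$ is bounded away below, so some tail of $[\phi]^g$ lies in $[\neg\psi^\star]^g$, giving $\phi\would\neg\psi$.
\end{itemize}

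The main obstacle is justifying the first reduction carefully. Claim~\ref{new-boundary} attaches identity claims possibly involving quantified variables, e.g.\ inside $\exists_{=k}$. We must ensure that, after the reduction in Claim~\ref{new-into-dnf}, the leftover identity conjuncts mention only free variables, so that they are genuinely world-invariant. I would argue this by noting that the counting quantifiers have already been absorbed into $F(-n-1)\wedge\neg F(-n)$, as in the proof of Claim~\ref{new-boundary}. Any remaining identity atoms then relate free variables only, and by rigidity each evaluates to a constant truth value across $W$.
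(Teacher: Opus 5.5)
Your reduction is the same as the paper's. Rigid (non-)identity conjuncts are $\mathcal{K}$-equivalent to $\top$ or $\bot$, so Claim~\ref{new-boundary} collapses to the normal form of Claim~\ref{boundary}, and the argument of Claim~\ref{with-cem} is then rerun. Your handling of identity atoms is also fine: once the counting quantifiers are replaced by the $\mathcal{K}$-equivalent $F(-n-1)\wedge\neg F(-n)$, only free-variable identities remain.

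The gap is in the rerun. In the unbounded case you split on whether $\psi^\star$ has a disjunct $\neg F(j)\wedge\exists xF(x)$ with $j\geq k$, and claim that otherwise $[\psi^\star]^g$ is bounded below. That does not follow, because $\psi^\star$ may have such a tail disjunct with $j<k$. Take $g(x)=-2$, $g(z)=-5$, $\phi\coloneqq\neg F(x)\wedge\exists yF(y)$ and $\psi\coloneqq\neg F(z)\wedge\exists yF(y)$. Then $\phi$ is itself a tail disjunct with $k=-2$, $[\phi]^g=\{n\in\mathbb{Z}^-: n<-2\}$, and $[\psi^\star]^g=\{n\in\mathbb{Z}^-: n<-5\}$. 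No normal form of $\psi^\star$ has a tail disjunct with index $\geq -2$, since such a tail would contain $-3\notin[\psi^\star]^g$. So you are in your ``latter case''. Yet $[\psi^\star]^g$ is unbounded below, and $\phi\would\neg\psi$ is \emph{false} at $-\infty$: below any $\phi$-world $x$ lies the $\phi$-world $\min(x,-6)$, which is a $\psi$-world. What actually holds is $\phi\would\psi$. As written, your argument assigns this subcase to the wrong disjunct and so does not establish CEM there. The slip is inherited: Case~2.2 in the proof of Claim~\ref{with-cem} explicitly files a tail disjunct of $\psi^\star$ with $j<k$ under $\phi\would\neg\psi^\star$, so saying that analysis ``applies unchanged'' imports the error.

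The repair is short and makes the threshold $k$ irrelevant. When $[\phi]^g$ is unbounded below, split on whether $\psi^\star$ has \emph{any} disjunct $\neg F(j)\wedge\exists xF(x)$.
\begin{itemize}
\item If it does, pick $x\in[\phi]^g$ with $x<j$. Every $\phi$-world $\preceq_{-\infty}x$ lies in $\{n\in\mathbb{Z}^-: n<j\}\subseteq[\psi^\star]^g$, so $\phi\would\psi$ holds.
\item If it does not, $[\psi^\star]^g$ is a finite union of singletons, hence bounded below by some $m$. Any $x\in[\phi]^g$ with $x<m$ then witnesses $\phi\would\neg\psi$.
\end{itemize}
Also treat $[\phi]^g=\varnothing$ separately: there both conditionals hold vacuously. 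Your inference from ``$[\phi]^g$ has no minimal element'' to ``$\phi$ has a tail disjunct'' needs $[\phi]^g$ to be nonempty.
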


\begin{proof}
    We need only modify the proof of the analogous Claim~\ref{with-cem}. 
    Note that via Claim~\ref{new-boundary}, we may reduce $\phi$ into a special form. 
    The disjuncts in $\phi$ may now have some identity and non-identity claims appearing as conjuncts within them. 
    However, relative to any variable assignment $g$, all such claims are either true at all worlds or at none (that is, variables are interpreted rigidly). 
    The true (non-)identity claims are intersubstitutable with $\top$, and the false ones are intersubstitutable with $\bot$. 
    Thus, in neither case does their presence change the proof.
\end{proof}
This completes the proof of Proposition~\ref{new-counter-model}.
\end{proof}

\subsection{Logics with Variable Domains}
\label{Subsection:LogicVariableDomains}

Generalizing the proof of Theorem~\ref{thm:QC2-incomp} to $\mathsf{QC2^v_{\ex}}$ and $\mathsf{QC2^v_=}$ is straightforward. 
First, neither Proposition~\ref{Proposition:FrameCorrespondence} nor Proposition~\ref{prop:DS} depend on any constraints on local domains. 
As for Propositions~\ref{counter-model} and \ref{new-counter-model}, these immediately apply to $\mathsf{QC2^v_{\ex/=}}$ since globally constant domain models are just a special case of variable domain models. 
(More directly, it is easy to verify that $\mathsf{QC2}+Ex \vdash \mathsf{QC2^v_{\ex}}$, and that $\mathcal{K} \Vdash Ex$.) 
Hence, all of these results carry over immediately to the variable domain case. 

\begin{theorem}
\label{thm:QC2v-incomp}
    $\mathsf{QC2^v_{\ex/=}}$ is not the logic of any class of selection frames. 
    Similarly for $\mathsf{QC2^c_{\ex/=}}$. 
\end{theorem}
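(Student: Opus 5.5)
The proof follows the template of Theorem~\ref{thm:QC2-incomp}. Fix a class $\mathcal{C}$ of selection frames and let $\mathsf{L}$ be one of $\mathsf{QC2^v_{\ex}}$, $\mathsf{QC2^v_=}$, $\mathsf{QC2^c_{\ex}}$, $\mathsf{QC2^c_=}$. There are two cases.

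First, suppose $\mathcal{C}$ contains a frame that is not weakly Stalnakerian, or (for the $\mathsf{c}$-logics) a frame that is not locally constant. Then by the variable and locally constant clauses of Proposition~\ref{Proposition:FrameCorrespondence}, $\mathsf{L}$ is not valid on that frame, so $\mathsf{L}\neq\mathsf{L}(\mathcal{C})$.

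Otherwise every frame in $\mathcal{C}$ is weakly Stalnakerian. The proof of Proposition~\ref{prop:DS} uses only Success, Uniformity, Uniqueness and the derived WLA, and places no constraint on $d$; it only requires that the witnesses range over $d(w)$, which they do. Hence $\neg\mathsf{DS}$ is valid over $\mathcal{C}$. The formula $\mathsf{DS}$ lies in $\mathcal{L}\subseteq\mathcal{L}_{\ex},\mathcal{L}_=$, so it is a formula of the relevant language.

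It then remains to show $\mathsf{L}\nvdash\neg\mathsf{DS}$. I would reuse the ordering model $\mathcal{K}$ of Definition~\ref{order-model}. In the $E$-language, $E$ is interpreted by $d$, and $d(w)=\mathbb{Z}^-$ everywhere, so $E(x)$ is $\mathcal{K}$-equivalent to $\top$ and can be eliminated. For the $=$-language, Claims~\ref{new-would-free}--\ref{new-with-cem} already handle identity. So $\mathcal{K}$ validates every axiom of $\mathsf{QC2}$ (respectively $\mathsf{QC2_=}$) together with $E(x)$, and by Claims~\ref{descending sequences} and its analogue it satisfies $\mathsf{DS}$ at $-\infty$.

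The key step is to check that everything derivable in $\mathsf{L}$ is valid in $\mathcal{K}$. I would do this via a syntactic inclusion: $\mathsf{L}\subseteq\mathsf{QC2}_{(=)}+E(x)$. For this, check each axiom and rule of $\mathsf{L}$:
\begin{itemize}
\item Axiom 23$^{\mathsf{v}}$ follows from axiom~\ref{ax:uelim-c}.
\item Axioms 31$^{\mathsf{c}}$ and 32$^{\mathsf{c}}$ follow from $E(x)$ and from $\nec E(x)$, the latter obtained by rule~\ref{rule:cnec}; the second is vacuous given $E(x)$.
\item Rule 27$^{\mathsf{v}}$ is the main obstacle, because of the nested antecedents $\vec{\alpha}$. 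From $\vdash\psi\then(\vec{\alpha}\would(E(y)\then\phi[y/x]))$ we get, using $E(y)$ and rule~\ref{rule:cnec}, $\vdash\psi\then(\vec{\alpha}\would\phi[y/x])$. We then need to pass the quantifier through the nested conditionals. I would first prove by induction on $|\vec{\alpha}|$ that $\forall y(\vec{\alpha}\would\chi)\then(\vec{\alpha}\would\forall y\chi)$ is derivable when $y$ is not free in $\vec{\alpha}$. The induction step combines axiom~\ref{ax:barcan-gc} with monotonicity under $\would$ obtained from rule~\ref{rule:cnec}. With this in hand, rule~\ref{rule:uintro-c} yields $\psi\then\forall y(\vec{\alpha}\would\phi[y/x])$, hence $\psi\then(\vec{\alpha}\would\forall y\phi[y/x])$. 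Alphabetic variance then gives $\forall x\phi$.
\end{itemize}

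Alternatively, one can check the soundness of rule 27$^{\mathsf{v}}$ directly in $\mathcal{K}$ by the Lewisian truth conditions, since the domains are constant.

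Since $\mathcal{K}\Vdash\mathsf{QC2}_{(=)}+E(x)$ (the rules preserving validity as in Claim~\ref{without-cem}) while $\mathcal{K},-\infty\nVdash\neg\mathsf{DS}$, we get $\mathsf{L}\nvdash\neg\mathsf{DS}$. So $\mathsf{L}$ is incomplete for $\mathcal{C}$, which finishes the theorem for all four logics.
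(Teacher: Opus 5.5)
Your architecture is exactly the paper's: you follow its parenthetical route, $\mathsf{QC2}+E(x)\vdash\mathsf{QC2^v_{\ex}}$ together with $\mathcal{K}\Vdash E(x)$. Your syntactic derivation of 27$^{\mathsf{v}}$ via the nested Barcan lemma is correct. The step that fails is $\mathcal{K}\Vdash\mathsf{QC2}_{(=)}+E(x)$, because $\mathcal{K}$ does not validate axiom~\ref{ax:barcan-gc}. Let $B\coloneqq\exists zF(z)$, so $[B]=\mathbb{Z}^-$, which has no $\preceq_{-\infty}$-minimum. For each $a\in\mathbb{Z}^-$, $B\would\neg F(a)$ holds at $-\infty$ with witness $a-1$: every $j\leq a-1$ has $j<a$, i.e.\ $\mathcal{K},j\nVdash F(a)$. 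Hence $\mathcal{K},-\infty\Vdash\forall x(B\would\neg F(x))$. But each $k\in\mathbb{Z}^-$ satisfies $F(k)$, so $[\forall x\neg F(x)]\cap\mathbb{Z}^-=\varnothing$, and $B\would\forall x\neg F(x)$ fails at $-\infty$. So the assertion in Claim~\ref{without-cem} that the non-CEM axioms hold on constant-domain Lewisian frames is false for the conditional Barcan formula, which needs the limit assumption that $\mathcal{K}$ lacks. The same example sinks your alternative of checking 27$^{\mathsf{v}}$ directly in $\mathcal{K}$. The premise $\neg B\then(B\would(E(y)\then\neg F(y)))$ is valid in $\mathcal{K}$, but the conclusion $\neg B\then(B\would\forall x\neg F(x))$ fails at $-\infty$.

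A better constant-domain model cannot rescue the route, since in fact $\mathsf{QC2}\vdash\neg\mathsf{DS}$. From CEM, generalization and axiom~\ref{ax:barcan-gc} one gets $\vdash\forall x\neg(B\would F(x))\then(B\would\forall x\neg F(x))$. With $\vdash B\would\exists xF(x)$ and rule~\ref{rule:cnec}, this gives $\vdash\exists x(B\would F(x))\vee(B\would\bot)$. By CSO (with axiom~\ref{ax:id} and rule~\ref{rule:cnec}), if $\vdash\chi\then\theta$ then $\vdash(\theta\would\bot)\then(\chi\would\bot)$. Since $\vdash F(x)\then B$, the disjunct $B\would\bot$ contradicts $\pos F(x)$. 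For the other disjunct, put $A\coloneqq F(x)\vee F(y)$. From $\vdash A\then B$ and $B\would F(x)$ we get $B\would A$, and then, by CSO, $A\would F(x)$. Together with $A\would\neg F(x)$ this gives $A\would\bot$, hence $F(x)\would\bot$, again contradicting $\pos F(x)$. Generalizing over $y$ and then $x$ yields $\vdash\neg\mathsf{DS}$; this is just the proof of Proposition~\ref{prop:DS} with $P=[B]$, which constant domains make expressible. So nothing that validates $\mathsf{QC2}_{(=)}+E(x)$ can satisfy $\mathsf{DS}$, and the inclusion $\mathsf{L}\subseteq\mathsf{QC2}_{(=)}+E(x)$ can never give $\mathsf{L}\nvdash\neg\mathsf{DS}$. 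The paper's own proof has the same gap, since it rests on Proposition~\ref{counter-model}. For the $\mathsf{v}$-logics you would need a countermodel with genuinely variable domains, where $\exists zF(z)$ no longer expresses $\bigcup_{a\in d(w)}[F(a)]$. For the $\mathsf{c}$-logics the derivation above appears to survive relativization to $E$: 31$^{\mathsf{c}}$ with MOD gives $E(x)\then(\chi\would E(x))$, and 32$^{\mathsf{c}}$ with 27$^{\mathsf{v}}$ recovers the Barcan formula. So $\mathsf{DS}$ may not witness incompleteness there at all.
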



Since, as we have already observed, $\mathsf{QC2^v_{=}}$ is provably equivalent to $\mathsf{QST}$, we have the following notable corollary of Theorem~\ref{thm:QC2v-incomp}:

\begin{corollary}
\label{Corollary:QST}
    $\mathsf{QST}$ is not the logic of any class of selection frames. 
\end{corollary}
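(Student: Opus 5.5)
The plan is to transfer Theorem~\ref{thm:QC2v-incomp} (for $\mathsf{QC2^v_=}$) to $\mathsf{QST}$ by way of the provable equivalence of the two systems noted after Theorem~\ref{thm:QC2v-comp}. The point is that frame incompleteness depends only on the \emph{set of theorems}: the statement ``$\mathsf{L}\neq\mathsf{L}(\mathcal{C})$ for every class $\mathcal{C}$ of selection frames'' (Definition~\ref{Definition:SelectionValidity}) mentions $\mathsf{L}$ only as a set of formulas. So once $\mathsf{QST}$ and $\mathsf{QC2^v_=}$ are shown to have the same theorems in $\mathcal{L}_=$, the corollary follows at once: if $\mathsf{QST}=\mathsf{L}(\mathcal{C})$, then $\mathsf{QC2^v_=}=\mathsf{L}(\mathcal{C})$, contradicting Theorem~\ref{thm:QC2v-incomp}.

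The equality of theorem sets can be obtained semantically. Theorem~\ref{thm:QST-comp} says $\mathsf{QST}$ is sound and complete for the Stalnakerian quasi-selection models, and Theorem~\ref{thm:QC2v-comp} says $\mathsf{QC2^v_=}$ is sound and complete for the variable-domain Stalnakerian quasi-selection models. These are the same class of structures, since Stalnaker and Thomason's quasi-frames already have variable local domains (Definition~\ref{Definition:Quasi Selection Frame}). Hence both logics coincide with the set of formulas valid on that class.

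For readers who prefer a direct argument, I would add a sketch of a syntactic cross-derivation:
\begin{itemize}
\item \textbf{Deriving $\mathsf{QST}$ in $\mathsf{QC2^v_=}$.} Axioms 3 and 5 of $\mathsf{QST}$ come from the MOD theorem and $\mathsf{CEM}$. Axioms 8 and 9 come from $23^{\mathsf{v}}$ together with the identity axioms.
\item \textbf{Deriving $\mathsf{QC2^v_=}$ in $\mathsf{QST}$.} $\mathsf{CSO}$ and $\mathsf{CEM}$ are derived from axioms 4, 5 and 7. Rule $27^{\mathsf{v}}$ is derived from rules 15 and 16 using the definition $E(y)\coloneqq\exists z(y=z)$.
\end{itemize}

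The main obstacle, if one insists on the syntactic route, is matching the two universal-introduction rules under nested conditionals. That is, one must check that $27^{\mathsf{v}}$ and the pair of rules 16/17 are interderivable given axioms 8, 9 and 12 of $\mathsf{QST}$. I would rely on the semantic route instead, and cite the two completeness theorems.
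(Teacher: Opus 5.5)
Your proposal is correct and is essentially the paper's argument: the paper also obtains $\mathsf{QST}=\mathsf{QC2^v_=}$ as sets of theorems from Theorems~\ref{thm:QST-comp} and \ref{thm:QC2v-comp}, which concern the same class of variable-domain Stalnakerian quasi-selection models, and then transfers Theorem~\ref{thm:QC2v-incomp}, since frame incompleteness depends only on the theorem set. Resting on the semantic route is the right call, because your syntactic sketch is too quick at the point you yourself flag: getting $27^{\mathsf{v}}$ by applying rule 15 and then pushing $\forall y$ inside $\vec{\alpha}\would$ would need a Barcan-type principle that is invalid over variable domains.
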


\noindent Thus, our incompleteness result is quite general. It applies regardless of the presence or absence of $=$, and regardless of the constraints placed on local domains. 

\section{Concluding Remarks}
\label{Section:Concluding Remarks}


\cite{StalnakerThomason1970} famously showed that the quantified conditional logic $\mathsf{QST}$ is sound and complete for a class of selection frames in which selection functions take formulas (and variable assignments; see Definition~\ref{Definition:Quasi Selection Frame}) as arguments. 
As we observed in Section~\ref{Section:Introduction}, many of the philosophical applications Stalnaker, in particular, has wanted to make of this semantics require that selection functions instead take propositions as arguments. 
It is tempting to conjecture that Stalnaker and Thomason's completeness result does not hinge on this difference. 
In this paper, we showed that this conjecture is false: $\mathsf{QST}$ is not the logic of any class of selection frames in which selection functions are defined on (all) propositions. 

This shows that there is a sense in which the completeness result obtained by Stalnaker and Thomason is an accident. 
Their completeness proof crucially depended on subtle semantic design choices, specifically concerning what arguments selection functions can take. 
On the other hand, we have demonstrated a robust incompleteness theorem which does not crucially hinge on features of the language or what constraints are imposed on domains. 

To be clear, our incompleteness result does not establish that the logical problem of conditionals, as Stalnaker conceived it, is unsolvable. 
Our result only shows that a certain class of logics, including the one Stalnaker and Thomason put forward, cannot constitute solutions to this problem in the presence of first-order quantifiers. 
The question of whether, and to what extent, the logical problem of conditionals is solvable by other logics remains open. 
In closing, we highlight some remaining questions that our incompleteness result raises for the study of quantified conditional logic. 

First, we have not determined whether our incompleteness result applies to conditional logics in which the logic of the outer modality ($\nec$) is strengthened. 
It is notable that the reductions utilized in the proof of Proposition~\ref{counter-model} all exploit the fact that the accessibility relation of the model isn't universal (specifically, $R(k) = \{k\}$; see Definition~\ref{order-model}). 
The accessibility relation does obey some nontrivial constraints, such as reflexivity and transitivity, and so we should expect incompleteness to apply even to logics as strong as $\mathsf{S4}$ for the outer modality. 
But it is an open question whether our incompleteness result can be generalized to conditional logics with an outer modal logic such as $\mathsf{S5}$.\footnote{Conditional logics with an $\mathsf{S5}$ outer modal logic include the systems \cite[pp.~137--138]{Lewis1973} calls \textsf{VCU} and \textsf{VCSU}.} It perhaps would not be \textit{too} surprising if having an $\mathsf{S5}$ outer modal logic made some difference; \cite{Fine1970propquantifiers} showed that whereas \textsf{S5} with quantifiers ranging over all propositions, $\mathsf{S5}\pi+$, is decidable, for many other modal logics \textsf{L}, $\mathsf{L}\pi+$ is not even axiomatizable. 

Second, our incompleteness theorems relied on showing that a certain formula, $\neg\mathsf{DS}$, fails to be a theorem of many quantified conditional logics even though it is valid over the relevant classes of frames. 
We have not determined whether the addition of $\neg\mathsf{DS}$ \textit{as an axiom} to these systems might yield systems that are complete. 
If so, a philosophical assessment of the desirability of $\neg\mathsf{DS}$ would be called for. 

Finally, there are many natural classes of selection frames such that we do not know whether the logic of the class is axiomatizable. In particular, we have not determined whether $\mathsf{L}(\mathcal{C})$ is axiomatizable where $\mathcal{C}$ is the class of all globally constant (weakly) Stalnakerian frames. 
If this class should turn out not to be axiomatizable, this would have significant ramifications for the feasibility of various philosophical projects Stalnaker and others intended this semantic framework for. 

Either affirmative or negative answers to any of these questions would carry philosophical significance, as they would shed light on the extent to which the solution to/solvability of the logical problem of conditionals is contingent on various choice points in how one develops a quantified conditional logic. 
If frame completeness can be restored by making certain substantive assumptions about accessibility or by adding new axioms, that would make the philosophical assessment of these assumptions and axioms more pressing. 
If, on the other hand, frame incompleteness was fairly robust to such semantic design choices, that would suggest that the project of solving the logical problem of conditionals was doomed from the start. 
This is an area that demands further investigation.

\printbibliography

\end{document}